\title{Embedding theory of lattices and its application for $2$-integrable lattices}
\author[]{Qianqian Yang$^\dag$}
\thanks{$^\dag$Q. Yang is partially supported by the National Natural Science Foundation of China (No. 12071454) and China Postdoctoral Science Foundation (No. 2020M671855).}
\address{School of Mathematical Sciences, University of Science and Technology of China, 96 Jinzhai Road, Hefei, 230026, Anhui, PR China}
\email{qqyang91@ustc.edu.cn}
\author[]{Kiyoto Yoshino$^\star$}\thanks{$^\star$K. Yoshino is the Corresponding author}
\address{Tohoku University, Graduate School of Information Sciences, 6-3-09~Aoba, Aramamaki-aza, Aoba-ku, Sendai, Miyagi, 980-8579, Japan}
\email{kiyoto.yosino.r2@dc.tohoku.ac.jp}
\numberwithin{equation}{section}
\newtheorem{lem}{Lemma}[section]
\newtheorem{thm}[lem]{Theorem}
\newtheorem{prop}[lem]{Proposition}
\newtheorem{cor}[lem]{Corollary}
\theoremstyle{definition}
\newtheorem{dfn}[lem]{Definition}
\newtheorem{rem}[lem]{Remark}
\newtheorem{example}[lem]{Example}
\newcommand{\R}{\mathbb{R}}
\newcommand{\Q}{\mathbb{Q}}
\newcommand{\Z}{\mathbb{Z}}		
\newcommand{\A}{\mathsf{A}}
\newcommand{\sE}{\mathsf{E}}
\newcommand{\e}{\mathbf{e}}
\newcommand{\sR}{\mathsf{R}}
\newcommand{\ba}{\mathbf{a}}
\newcommand{\bb}{\mathbf{b}}
\newcommand{\bc}{\mathbf{c}}
\newcommand{\be}{\mathbf{e}}
\newcommand{\bs}{\mathbf{s}}
\newcommand{\bt}{\mathbf{t}}
\newcommand{\bu}{\mathbf{u}}
\newcommand{\bv}{\mathbf{v}}
\newcommand{\bx}{\mathbf{x}}
\newcommand{\by}{\mathbf{y}}
\newcommand{\bz}{\mathbf{z}}
\newcommand{\bw}{\mathbf{w}}
\newcommand{\pr}{\rho}
\newcommand{\BB}[4]{\begin{pmatrix} #1 & #2 \\ #3 & #4 \end{pmatrix}}
\DeclareMathOperator{\rank}{rank}
\DeclareMathOperator{\Aut}{Aut}
\DeclareMathOperator{\Sym}{Sym}
\DeclareMathOperator{\supp}{supp}
\DeclareMathOperator{\type}{type}
\DeclareMathOperator{\sign}{sign}
\DeclareMathOperator{\M}{M}
\begin{document}

	\begin{abstract}
	   For a positive integer $s$, a lattice $L$ is said to be $s$-integrable if $\sqrt{s}\cdot L$ is isometric to a sublattice of $\Z^n$ for some integer $n$.
		Conway and Sloane found two minimal non $2$-integrable lattices of rank $12$ and determinant $7$ in 1989.
		We find two more ones of rank~$12$ and determinant $15$.
		Then we introduce a method of embedding a given lattice into a unimodular lattice,
		which plays a key role in proving minimality of non $2$-integrable lattices and finding candidates for non $2$-integrable lattices.
	\end{abstract}
   
   \keywords{integral lattice, embedding theory, Waring's problem, $s$-integrability}
   \subjclass[2020]{11E25, 11E08}
	
\maketitle
\maketitle
\section{Introduction}	\label{sec:intr}
	This paper is related to one of J. H. Conway's results.
	We were very surprised to hear of his untimely death from the virus.
	We mourn it and pay tribute to his greatness.

	In this paper, a lattice we mean is a positive definite integral $\mathbb{Z}$-lattice and a unimodular lattice is a positive definite unimodular $\Z$-lattice, if we do not specify it. Let $s$ be a positive integer.
	A lattice $L$ is said to be \emph{$s$-integrable} if $\sqrt{s}\cdot L$ is isometric to a sublattice of $\Z^n$ for some integer $n$.
	Let $\phi(s)$ be the smallest rank in which there is a non $s$-integrable lattice.
	The values $\phi(1)=6$, $\phi(2)=12$ and $\phi(3)=14$ were shown in~\cite[Theorem~1]{CS1989},
	and the value $\phi(s)$ is not determined if $s$ is at least $4$.

	A lattice $L$ is said to be \emph{non $s$-minimal},
	if there exist a lattice $M$ and a positive integer $m$ such that $\sqrt{s} \cdot L$ is isometric to a sublattice of $\sqrt{s} \cdot M \perp \Z^m$ which is not contained in $\sqrt{s} \cdot M$.
	Otherwise it is said to be \emph{$s$-minimal}. 
	Notice that a nonzero $s$-integrable lattice is always non $s$-minimal. 
	To erase language, if a non $s$-integrable lattice is $s$-minimal, we say it is a \emph{minimal non $s$-integrable lattice}.
	In the case of $s=1$, Ko~\cite{Ko1939,Ko1942a,Ko1942b} proved that the lattices $\sE_6$, $\sE_7$ and $\sE_8$ are unique minimal non $1$-integrable lattices of rank $6$, $7$ and $8$ respectively,
	and Plesken~\cite{P} gave a short proof by embedding lattices into unimodular lattices.
	Conway and Sloane gave non $2$-integrable lattices as shown in Theorem~\ref{thm:disc7},
	and suspected these lattices are the only minimal non $2$-integrable lattices of rank $12$.

	\begin{dfn}\label{dfn:A15p}
		For each positive integer $n$,
		let
		$
			\A_{n} := \{ \mathbf{x} \in \Z^{n+1} \mid (\mathbf{x},\e) = 0 \}
		$ be a lattice,
		where $\e$ denotes the all one vector in $\Z^{n+1}$.
		Let $\A_{15}^+$ denote the unimodular overlattice of $\A_{15}$,
		that is,
		the lattice generated by $\A_{15}$ and the vector
		\begin{align*}
			[4] := (4,4,4,4,4,4,4,4,4,4,4,4,-12,-12,-12,-12 )/16 \in \R^{16}.
		\end{align*}
	\end{dfn}
	
	\begin{thm}[{\cite[Theorem~14]{CS1989}}]	\label{thm:disc7}
		The orthogonal sublattices in $\A_{15}^{+}$ to a sublattice in $\A_{15}^+$ with Gram matrix
		\begin{align}	\label{thm:disc7:1}
			\begin{pmatrix}
				3 & 2 & 2 \\
				2 & 3 & 2 \\
				2 & 2 & 3
			\end{pmatrix}
		\end{align}
		are non $2$-integrable lattices of rank $12$ and determinant $7$.
	\end{thm}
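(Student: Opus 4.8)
The plan is to dispatch the rank and determinant first and then to isolate the non-$2$-integrability, which is the real content. Fix any sublattice $S \subseteq \A_{15}^{+}$ with the Gram matrix in~\eqref{thm:disc7:1} and put $T := S^{\perp}$, the orthogonal complement taken inside $\A_{15}^{+}$. A direct $3\times 3$ computation gives $\det S = 7$, and since $7$ is squarefree $S$ must be primitive: a non-primitive $S$ would satisfy $\det S = [\,\overline{S}:S\,]^{2}\det\overline{S}$ with $[\,\overline{S}:S\,]>1$, which squarefreeness forbids. As $\A_{15}^{+}$ is unimodular of rank $15$ and $S$ is primitive of rank $3$, the complement $T$ has rank $15-3=12$, and the gluing isomorphism for complementary primitive sublattices of a unimodular lattice yields $\disc T \cong \disc S$ with the opposite form; in particular $\det T = \det S = 7$. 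This settles the rank and the determinant for every such $T$ simultaneously.

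For non-$2$-integrability I would argue by contradiction, assuming $\sqrt{2}\cdot T \hookrightarrow \Z^{n}$ for some $n$, equivalently $2G_{T} = A^{\top}A$ for an integer matrix $A$. The first reduction is that $\sqrt{2}\cdot T$ is an even lattice, so its image lands in the even sublattice $D_{n} = \{x\in\Z^{n} : \sum_{i}x_{i}\equiv 0 \bmod 2\}$; passing to the saturation $P$ of the image in $\Z^{n}$ and setting $K := P^{\perp}$ exhibits $\Z^{n}$ as a gluing of the primitive pair $(P,K)$, so that $\disc K \cong \disc P$ with opposite form. A short computation from $(\sqrt 2 T)^{*} = \tfrac{1}{\sqrt 2}T^{*}$ and $\det T = 7$ (with $7$ odd) gives $\disc(\sqrt{2}\cdot T) \cong (\Z/2\Z)^{12}\oplus \Z/7\Z$, and since every finite-index even overlattice $P$ of $\sqrt 2 T$ corresponds to an isotropic subgroup of this discriminant group, the data $(\disc P, \disc K)$ ranges over an explicit finite list. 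Together with the positivity and signature constraints this bounds the rank and determinant of the candidate complements $K$, so that up to isometry only finitely many $K$ can occur.

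The decisive point is that $\Z^{n}$ is not merely some odd unimodular lattice: it carries an orthonormal frame, i.e. it is itself $1$-integrable, and by the uniqueness of orthogonal decomposition its only indecomposable summands are copies of $\langle 1\rangle$. The plan is therefore to show that no gluing of $P$ (an overlattice of $\sqrt{2}\cdot T$) to a complement $K$ in the prescribed genus can produce a lattice with such a frame. Concretely, one enumerates the finitely many admissible $K$, forms each glued unimodular lattice, and checks that it is forced to be a nontrivial (non-orthonormal) odd unimodular lattice --- in practice by locating an indecomposable unimodular summand of rank greater than $1$, or an $\sE_{8}$- or $\A_{15}^{+}$-type configuration inherited from the way $T$ sits inside $\A_{15}^{+}$ --- which is incompatible with $\Z^{n}\cong\langle 1\rangle^{\perp n}$.

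I expect the global step just described to be the main obstacle. All the local invariants --- the discriminant form, the signature, and the $2$-adic and $7$-adic symbols --- are individually realizable, so there is no purely local (genus-theoretic) obstruction to $\sqrt{2}\cdot T \hookrightarrow \Z^{n}$; the failure is genuinely global and must be extracted from the rigidity of the orthonormal frame of $\Z^{n}$. Making the search finite in the first place --- bounding $n$ and the index $[\,P:\sqrt{2}\cdot T\,]$ and classifying the complements $K$ up to isometry --- is exactly where the embedding-into-unimodular-lattices method developed below, reinforced by Minkowski--Siegel mass and reduction estimates, carries the argument to a terminating computation.
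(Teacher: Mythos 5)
Your first paragraph is fine and matches the paper's own treatment: squarefreeness of $7$ forces primitivity (Lemma~\ref{lem:pri3}), and the determinant of a primitive sublattice of a unimodular lattice equals that of its orthogonal complement (Lemma~\ref{lem:pri4}), giving rank $12$ and determinant $7$. The non-$2$-integrability argument, however, has a genuine gap, and it sits exactly where you place the ``decisive point.'' Your finiteness claim is false: if $\sqrt{2}\cdot T\hookrightarrow\Z^n$ with saturation $P$ and complement $K:=P^\perp$, then $\rank K=n-12$ with $n$ unbounded, and although $|\disc K|=|\disc P|$ divides $2^{12}\cdot 7$, bounded determinant together with minimum norm $\geq 2$ does \emph{not} bound the rank --- the lattices $\sE_8^{\perp k}$ have determinant $1$ and minimum $2$ for every $k$, so $K$ ranges over infinitely many isometry classes and no Minkowski--Siegel mass estimate terminates the search. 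Worse, the step you defer --- showing that no gluing of $P$ to an admissible $K$ carries an orthonormal frame --- is not a checkable condition extracted from the gluing data; it is a verbatim restatement of the original problem ``$\sqrt{2}\cdot T$ does not embed in any $\Z^n$,'' and ``locating an indecomposable summand of rank $>1$'' is not forced by the discriminant-form bookkeeping you set up. As you yourself note, there is no local obstruction, so the contradiction cannot emerge from the genus invariants your scheme manipulates.

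The paper closes precisely this gap with a different reduction that you are missing: Conway--Sloane's eutactic star criterion (Theorem~\ref{thm:iff_eut} and Lemma~\ref{lem:eut}), which converts the unbounded-$n$ embedding question into a statement about the \emph{finitely many} vectors of norm at most $2$ in $T^*$. The analogue of Lemma~\ref{lem:M12} for the Gram matrix~\eqref{thm:disc7:1} shows $T^*$ has minimum norm $>1$ and that every vector of $T^*$ of norm at most $2$ is the projection of a root of $\A_{15}^+$; Lemma~\ref{lem:thm}'s argument reduces to two representatives $\langle\ba,\bb,\bc''\rangle$, $\langle\ba,\bb,\bc'''\rangle$ up to $\Aut(\A_{15}^+)$; and then Lemma~\ref{lem:m}, combined with the type computation of Lemma~\ref{lem:type_inner}, produces an explicit $2\times 2$ Gram matrix $G$ of two projected roots for which $2I-G$ would have to be positive semidefinite but is not (this is Remark~\ref{rem:new strategy}, run exactly as in Propositions~\ref{prop:N1} and~\ref{prop:N2}; Conway--Sloane's original proof instead applies Lemma~\ref{lem:eut} with well-chosen test vectors $\bw$). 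If you want a computational route, Lemma~\ref{lem:int_prog} shows the right finite object is an integer program over the norm-$\leq 2$ vectors of $T^*$, not an enumeration of complements $K$ inside $\Z^n$.
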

	
	Furthermore, Conway and Sloane remarked that Theorem~\ref{thm:disc7} gives precisely two minimal non $2$-integrable lattices up to isometry.
	Our motivation comes from verifying the claim that Conway and Sloane suspected and determining the minimal non $s$-integrable lattices.
	As a main result in this paper, we give two more minimal non $2$-integrable lattices:	
	\begin{thm} \label{thm:main}
		There are precisely two lattices with Gram matrix
		\begin{align}	\label{thm:main:1}
			\begin{pmatrix}
				3 & 2 & 0 \\
				2 & 3 & 0 \\
				0 & 0 & 3
			\end{pmatrix}
		\end{align}
		up to $\Aut(\A_{15}^+)$ in $\A_{15}^+$,
		and they are given by $\langle \ba,\bb,\bc \rangle$ and $\langle \ba,\bb,\bc' \rangle$,
		where
		\begin{align*}
			&\ba:= ( -3,	-3,-3,	-3,1,		1,1,		1,		1,1,1,1,1,1,1,1 )/4 \in \A_{15}^{+},	\\
			&\bb := ( -3,	-3,-3,	1,-3,		1,1,		1,		1,1,1,1,1,1,1,1 )/4 \in \A_{15}^{+},	\\
			&\bc:= ( -3,	1,1,		1,1,		-3,-3, -3,	1,1,1,1,1,1,1,1 )/4 \in \A_{15}^{+},	\\
			&\bc':= ( 1,		1,1,		-3,-3,	-3,-3	,	1,		1,1,1,1,1,1,1,1 )/4 \in \A_{15}^{+}.
		\end{align*}
		The non-isometric sublattices $\langle \ba,\bb,\bc \rangle^{\perp}$ and $\langle \ba,\bb,\bc' \rangle^{\perp}$
		are minimal non $2$-integrable lattices of rank $12$ and determinant $15$.
	\end{thm}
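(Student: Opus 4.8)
The plan is to reduce the classification to the combinatorics of $4$-element subsets of $\{1,\dots,16\}$. First I would show that the norm-$3$ vectors of $\A_{15}^+$ are exactly the vectors $\pm v_S$, where for a $4$-subset $S\subseteq\{1,\dots,16\}$ I set $v_S:=\sum_{i\in S}(-\tfrac34)e_i+\sum_{i\notin S}(\tfrac14)e_i$. Indeed $v_S\in[4]+\A_{15}$, a direct computation gives $v_S\cdot v_T=|S\cap T|-1$ (in particular $|v_S|^2=3$), and examining the minimal vectors in the four cosets of $\A_{15}$ in $\A_{15}^+$ shows that no further norm-$3$ vectors occur ($\A_{15}$ and $[8]+\A_{15}$ have no vector of norm $3$). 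With this notation $\ba=v_{\{1,2,3,4\}}$, $\bb=v_{\{1,2,3,5\}}$, $\bc=v_{\{1,6,7,8\}}$, $\bc'=v_{\{4,5,6,7\}}$, and the entries of the Gram matrix~\eqref{thm:main:1} are read off from the intersection formula.

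Next I would pin down $\Aut(\A_{15}^+)$. Every norm-$2$ vector of $\A_{15}^+$ is a root $e_i-e_j$ of $\A_{15}$, so the root sublattice equals $\A_{15}$ and is characteristic; hence $\Aut(\A_{15}^+)$ embeds into $\Aut(\A_{15})=\Sym_{16}\times\{\pm1\}$, and since $\Sym_{16}$ and $-1$ both preserve $[4]$ modulo $\A_{15}$, equality holds. As $-1$ fixes every sublattice, it suffices to classify sublattices with Gram matrix~\eqref{thm:main:1} up to $\Sym_{16}$. Such a sublattice $L$ has exactly six norm-$3$ vectors and a single pair $\pm(v_1-v_2)$ of roots; taking the $v_S$-representatives of the three $\pm$-pairs yields $4$-subsets $S_1,S_2,S_3$ with $|S_1\cap S_2|=3$ and $|S_1\cap S_3|=|S_2\cap S_3|=1$ (the constraint $v_1\cdot v_2=2$ forces $S_1,S_2$ to the same type, since opposite types would demand an impossible intersection of $-1$). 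Normalizing $S_1=\{1,2,3,4\}$, $S_2=\{1,2,3,5\}$, the admissible $S_3$ fall into exactly two orbits under the stabilizer $\Sym_{\{1,2,3\}}\times\Sym_{\{6,\dots,16\}}$, distinguished by the basis-independent invariant $|S_1\cap S_2\cap S_3|\in\{0,1\}$; the value $1$ gives $\langle\ba,\bb,\bc\rangle$ and the value $0$ gives $\langle\ba,\bb,\bc'\rangle$, proving there are precisely two sublattices up to $\Aut(\A_{15}^+)$.

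For the invariants of the complements, note that the determinant of~\eqref{thm:main:1} equals $15$, which is squarefree, so each $L$ is automatically primitive in $\A_{15}^+$ (a proper saturation would scale the determinant by a perfect square $>1$ dividing $15$). Hence $L^\perp$ has rank $15-3=12$, and the duality $\det L^\perp=\det L$ for a primitive sublattice of a unimodular lattice gives $\det L^\perp=15$. To separate the two complements I would count roots: $e_i-e_j\in L^\perp$ iff $i,j$ lie in the same block of the partition of $\{1,\dots,16\}$ cut out by $S_1,S_2,S_3$. For $\langle\ba,\bb,\bc\rangle$ the block sizes are $1,2,1,1,3,8$, giving root system $\A_1\oplus\A_2\oplus\A_7$ ($64$ roots), while for $\langle\ba,\bb,\bc'\rangle$ they are $3,1,1,2,9$, giving $\A_1\oplus\A_2\oplus\A_8$ ($80$ roots); the differing root numbers force $\langle\ba,\bb,\bc\rangle^\perp\not\cong\langle\ba,\bb,\bc'\rangle^\perp$.

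The remaining assertions, non-$2$-integrability and $2$-minimality, are where the real difficulty lies, and I expect this to be the main obstacle. A rank-$12$ lattice $M$ is $2$-integrable precisely when $2G_M$ is a sum of squares of integral linear forms (the Waring-type reformulation of $\sqrt2\,M\hookrightarrow\Z^n$), and the embedding-into-unimodular-lattice theory translates both this property and the existence of a proper partial integration $\sqrt2\,M\hookrightarrow\sqrt2\,N\perp\Z^m$ into the existence of suitable embeddings of $M$ into unimodular lattices. Arguing as Conway and Sloane did for the determinant-$7$ lattices of Theorem~\ref{thm:disc7}, I would show that the genus and root data of $L^\perp$ obstruct every such embedding, which simultaneously yields non-$2$-integrability and rules out any nontrivial partial integration, i.e.\ establishes minimality. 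Carrying out this finite but delicate obstruction analysis for the two determinant-$15$ lattices is the crux of the proof.
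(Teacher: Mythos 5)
Your first three steps are correct and essentially coincide with the paper's own treatment: the identification of the norm-$3$ vectors of $\A_{15}^+$ with $\pm\bt_I$ for $4$-subsets $I$, the inner-product formula $(\bt_I,\bt_J)=|I\cap J|-1$, and the two-orbit classification (the paper's Lemma~\ref{lem:thm}, where your invariant $|S_1\cap S_2\cap S_3|\in\{0,1\}$ makes explicit what the paper leaves implicit); likewise the primitivity from squarefree determinant, $\det L^\perp=\det L=15$, and the root counts $64$ versus $80$ reproduce Lemmas~\ref{lem:pri3}, \ref{lem:pri4}, \ref{lem:M12} and the first half of Proposition~\ref{prop:main_min}. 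But you have not proved the theorem: the non-$2$-integrability and the minimality, which you yourself identify as the crux, are left as a strategy sketch, and the sketch points in a direction that cannot work. Genus-theoretic obstructions are powerless here: by the paper's Theorem~\ref{thm:embedding general}\eqref{thm:embedding general:4}, \emph{every} lattice of rank $n$ (in particular $\sqrt2\,L^\perp$) embeds into some unimodular lattice of rank $n+3$, so all local/genus obstructions to such embeddings vanish identically; non-$2$-integrability is the failure of embedding into the \emph{standard} lattice $\Z^n$, which is not a genus invariant. You also mischaracterize Conway and Sloane's determinant-$7$ proof: Theorem~\ref{thm:disc7} was proved with eutactic stars and well-chosen test vectors (cf.\ Remark~\ref{rem:new strategy}), not by an embedding obstruction.

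What the paper actually does for the crux is archimedean and combinatorial. By Theorem~\ref{thm:iff_eut}, $2$-integrability of $N$ is equivalent to the existence of a eutactic star of scale $2$ in $N^*$; Lemma~\ref{lem:M12}\eqref{b4} shows every nonzero vector of $N^*$ of norm at most $2$ is the projection of a root $\be_i-\be_j$ of $\A_{15}^+$ and the dual minimum is $16/15>1$; then the type analysis under a symmetry-adapted partition (Lemmas~\ref{lem:type_inner} and~\ref{lem:m}, carried out in Propositions~\ref{prop:N1} and~\ref{prop:N2}) produces, for any putative star, two projected roots whose $2\times2$ Gram matrix $G$ violates positive semidefiniteness of $2I-G$ --- a contradiction. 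Minimality is likewise not a byproduct of an embedding obstruction: the paper derives it from Plesken's criterion (Lemma~\ref{lem:P}, using the dual minimum $16/15>1$ to get minimal non-$1$-integrability) combined with Proposition~\ref{prop:P}, where additive indecomposability follows because both complements contain the irreducible rank-$7$ sublattice $\langle \be_9-\be_{10},\ldots,\be_{15}-\be_{16}\rangle$ generated by norm-$2$ vectors, with $7\geq 12-5$; the proof of Proposition~\ref{prop:P} in turn uses the embedding theory (Theorem~\ref{embedding to odd}) only to push a rank-$\leq5$ projection into $\Z^8$. Without these ingredients --- the eutactic-star criterion, the projection-of-roots lemma, the positive-semidefiniteness analysis, and the Plesken-type minimality argument --- your proposal establishes the classification and the invariants but not the non-$2$-integrability or the minimality that give the theorem its content.
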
	
	We show an unified way to prove the non $2$-integrability of the lattices given in above two theorems.	
	It is natural to wonder if there exist more minimal non $2$-integrable lattices of rank $12$,
	and this problem is still open.
	
	In order to prove the minimality of non $2$-integrable lattices in Section~\ref{sec:minimality} and explain how to find candidates for non $2$-integrable lattices in Subsection~\ref{subsec:app},
	we introduce a method of embedding lattices into unimodular lattices as follows. For undefined notation, we refer to next section.
	\begin{thm}\label{thm:embedding general}
    	Let $m$ and $n$ be positive integers. Let $L$ be a lattice on the $n$-dimensional quadratic $\Q$-space $V$.
    	Then $L$ is a sublattice of a unimodular lattice of rank $m$ if and only if one of the following holds:
	    \begin{enumerate}
	        \item \label{thm:embedding general:1}
	        	$m=n$, and for each prime number $p$, $\det(V_p) = 1$ and $S_p(V) = 1$.
	    	\item \label{thm:embedding general:2}
	    		$m = n+1$, and for each prime number $p$,
                $
                    S_p(V)(\det(V),\det(V))_p=1.
                $
    		\item \label{thm:embedding general:3}
    			$m = n+2$, and for each prime number $p$,
            	$
            	    S_p(V)=\begin{cases}
            	        1   & \text{ if } p>2 \text{ and } \det(V_p)=-1,\\
            	        -1  & \text{ if } p=2 \text{ and } \det(V_2)=-1.
            	   \end{cases}
            	$
		    \item \label{thm:embedding general:4}
		    	$m \geq n+3$.
	    \end{enumerate}
    \end{thm}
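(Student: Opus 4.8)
The plan is to reduce the integral embedding problem to a purely rational one about the quadratic space $V$. The key preliminary observation is that the rational span of every positive definite unimodular $\Z$-lattice $U$ of rank $m$ is isometric to the standard space $\langle 1,\dots,1\rangle$ of dimension $m$: since $U\otimes\R$ is positive definite we have $S_\infty(U)=1$; at each odd prime $p$ the localization $U_p$ is $\Z_p$-unimodular, hence diagonalizable by units, so $S_p(U)=1$; and the product formula $\prod_v S_v(U)=1$ then forces $S_2(U)=1$. Together with $\det U=1$ this identifies $\Q U$ with $\langle1,\dots,1\rangle$. Therefore, writing $W:=\langle1,\dots,1\rangle$ for the $m$-dimensional standard space, $L$ can embed into a rank-$m$ unimodular lattice only if $V$ embeds isometrically into $W$ over $\Q$, and the proof divides into (i) determining when this rational embedding exists and (ii) upgrading a rational embedding to an integral one.

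For (i) I would apply the local--global principle for representations of quadratic spaces: $V$ embeds into $W$ over $\Q$ if and only if $V_p$ embeds into $W_p$ at every place $p$, which in turn is equivalent to the existence of a complementary space $V'_p$ of dimension $m-n$ with $\det V'_p\equiv\det V_p$ and Hasse invariant governed by $1=S_p(V)S_p(V')(\det V,\det V)_p$, i.e.\ $S_p(V')=S_p(V)(\det V,\det V)_p$. Positive definiteness makes the place $p=\infty$ automatic once $m\ge n$. At a finite prime I would invoke the $p$-adic classification: a space of dimension $k=m-n$, determinant $\det V_p$, and a prescribed Hasse invariant exists for every choice of that invariant, with the sole exceptions that a binary space of determinant $-1$ must have Hasse invariant $+1$, and that a line or the zero space has Hasse invariant $1$. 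Reading off these constraints via the identities $(\det V,\det V)_p=(\det V,-1)_p$, $(-1,-1)_p=1$ for odd $p$, and $(-1,-1)_2=-1$ reproduces the stated conditions in the cases $m=n$, $m=n+1$, and $m=n+2$; for $m-n\ge3$ every determinant--Hasse pair is realizable $p$-adically, so no condition survives. This settles necessity and the rational content of sufficiency at once.

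For (ii) I would fix a global positive definite complement $V'$ with $V\perp V'\cong W$ and seek a positive definite lattice $L'$ on $V'$ whose discriminant (bilinear) form is the negative of that of $L$. Given such an $L'$, the orthogonal sum $L\perp L'$ carries a totally isotropic glue subgroup of order $\sqrt{\disc L\cdot\disc L'}$, and the resulting overlattice $U\supseteq L\perp L'\supseteq L$ is unimodular of rank $m$. To construct $L'$ I would realize its localizations $L'_p$ prime by prime, each a $\Z_p$-lattice on $V'_p$ with the prescribed local discriminant form and unimodular for almost all $p$, and then glue them to a global lattice by the local--global correspondence for lattices. This local realization is possible precisely because $\det V'_p\equiv\det V_p\equiv\det L$ modulo squares and because the available dimension $m-n$ is large enough: for $m-n\ge3$ a unimodular Jordan summand can always absorb any unit discrepancy in the determinant, while for $m-n\le2$ the matching of local determinants is exactly what the condition of the corresponding case guarantees.

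I expect step (ii) to be the main obstacle. Part (i) is essentially bookkeeping with Hilbert symbols and the classification of $p$-adic quadratic spaces, but the passage from a rational to an integral embedding demands genuine control of discriminant forms. The delicate point is the prime $p=2$: there the distinction between the discriminant bilinear form and its quadratic refinement decides whether the constructed overlattice is odd or even, and the $2$-adic Jordan decomposition is the most intricate to handle. Arranging the local lattices $L'_p$ so that the glue subgroup is genuinely isotropic and the overlattice is integral and unimodular, uniformly across all four cases, is where the real work lies.
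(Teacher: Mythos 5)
Your part (i) is exactly the paper's route: it establishes that the rational span of a positive definite unimodular lattice is $I_m$ (Lemma~\ref{lem:necessary condition on V}), reduces the rational embedding to the existence of a complement $U$ with $V \perp U \cong I_m$, and settles the local conditions by the classification of quadratic $\Q_p$-spaces together with the Hilbert-symbol identity $S_p(U)=S_p(V)(\det V,\det V)_p$ (Lemmas~\ref{det and Hasse symbol}--\ref{lem:to embed a vector space general}, Proposition~\ref{prop:to embed a vector space special}). That bookkeeping is correct, including the degenerate cases you flag.

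The gap is in step (ii), and you have located it yourself without closing it. Your gluing construction needs a positive definite lattice $L'$ on $V'$ whose discriminant form is \emph{anti-isometric} to that of $L$, and you justify its existence only by matching local determinants and Hasse invariants. At $p=2$ this is insufficient: the determinant and Hasse invariant of $V'_2$ do not determine the discriminant quadratic form of a $\Z_2$-lattice on it, and an anti-isometry of finite quadratic forms (not merely equality of orders and determinants) is what makes the glue subgroup isotropic of the right order. Moreover, in the case $m=n$ your scheme silently requires the discriminant form of $L$ itself to admit a Lagrangian subgroup, which you never prove. So as written the sufficiency direction is a program, not a proof. The paper sidesteps all of this with one observation you are missing: on a quadratic $\Q$-space with $\det(V_p)=1$ and $S_p(V_p)=1$ for all $p$ (equivalently $\cong I_m$), \emph{every} $\Z^{(s)}$-maximal $\Z$-lattice is unimodular (Lemma~\ref{sufficient condition on V}, resting on uniqueness of maximal $\Z_p$-lattices, Theorem~\ref{unique maximal}, and the bound $\nu_2(\det L_2)\leq 1$ for maximal $\Z_2$-lattices, Proposition~\ref{prop:max z2 lattice det}). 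Hence one takes \emph{any} integral lattice $N$ on the complement $U$, forms $L \perp N$, and passes to a $\Z^{(s)}$-maximal overlattice, which is automatically unimodular of rank $m$ and contains $L$. No discriminant forms, no glue group, and no delicate $2$-adic Jordan analysis are needed; if you want to salvage your approach, replace the entire gluing step by this maximality argument.
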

    Conway and Sloane~\cite{CS1989} explained the proof of above theorem in the case of embedding lattices of rank $n$ into a unimodular lattice of rank $n+3$ for every positive integer $n$. Following their explanation, we show the proof of this theorem in more details, and give applications. In addition, we present a theorem to embed lattices into an odd unimodular lattice (see Theorem~\ref{embedding to odd}).

	This paper is organized as follows:
	We introduce notation in Section~\ref{sec:notation},
	and well-known results for quadratic spaces in Section~\ref{sec:quad}.
	In Section~\ref{sec:maximal}, for every prime number $p$, we introduce properties of the maximal $\Z_p$-lattices.
	In Section~\ref{sec:embedding theory}, we show a method of embedding a lattice into another by applying the results in the previous two sections.
	In Section~\ref{sec:pri}, we introduce lemmas for primitive lattices.
	In Section~\ref{sec:eut}, we give necessary and sufficient conditions for a lattice to be $s$-integrable and useful lemmas.
	In Section~\ref{sec:a}, we study the lattice $\A_{15}^+$,	and prove the first statement of Theorem~\ref{thm:main}.
	In Section~\ref{sec:minimality}, we discuss the minimality of non $2$-integrable lattices.
	In Section~\ref{sec:proof}, we separately prove the non $2$-integrability and the minimality claimed in Theorem~\ref{thm:main}.

\section{Notation} \label{sec:notation}
	We will follow the book~\cite{basic quadratic form} and give the basic notation as follows.
	Throughout this paper, let $R$ denote a principal ideal domain with quotient field $F \supsetneq R$. Let $R^*$ denote the set of units of $R$, and $F^*$ denote the set of the nonzero elements of $F$.

	Let $(V,B,q)$ be a \emph{quadratic $F$-space},
	where $B$ is a symmetric bilinear form on $V$, and $q$ is the quadratic form associated with $B$.
	For simplicity we usually just write $V$.
	We write $V \simeq W$ if two quadratic $F$-spaces $V$ and $W$ are isometric.
	The quadratic spaces mentioned in this paper are always \emph{regular}, that is, they have no nonzero vector $\bv$ such that $B(\bv,\bu) = 0$ holds for all its vector $\bu$.
	Let $\det(V)$ denote the \emph{determinant} of $V$, which is the coset in $F^*/(F^*)^2$ represented by the determinant of the Gram matrix with respect to a basis of $V$.
	
	An $R$-module $L\subseteq V$ is called an \emph{$R$-lattice} in $V$ if $L=0$ or if there exist linearly independent elements $\bv_1,\ldots,\bv_r$ of $V$ such that $L=R\bv_1\oplus\cdots\oplus R\bv_r$.
	We call $\bv_1,\ldots,\bv_r$ a basis of $L$ and $r$ the \emph{rank} of $L$ (and $\rank 0=0$). We say $L$ is \emph{on} $V$ if $\dim(V)=r$.
	We write $L \simeq M$ or $L \simeq_R M$ if two $R$-lattices $L$ and $M$ are isometric.
	Let $\det(L)$ denote the \emph{determinant} of an $R$-lattice $L$, which is the coset in $F^*/(R^*)^2$ represented by the Gram matrix with respect to a basis of $L$. For $a \in F$, let $a L$ denote the $R$-lattice $\{a \bu \mid \bu \in L\}$.

    Let $L'$ be a sublattice of $L$. The \emph{orthogonal complement} of $L'$ in $L$ is the $R$-module $\{\mathbf{u}\in L\mid B(\mathbf{u},\mathbf{v})=0\text{ for all }\mathbf{v}\in L'\}$, which is also a sublattice of $L$ and is denoted by $(L')^{\perp}$.

	For every positive integer $n$, a matrix in $\M_n(R)$ is said to be \emph{unimodular} if its determinant is in $R^*$. The set of unimodular lattices in $\M_n(R)$ is denoted by $GL_n(R)$.	
	For two matrices $M_1$ and $M_2$ in $\M_n(R)$, we say that they are \emph{$R$-congruent}, denote by $M_1 \sim_R M_2$, if there exists a unimodular matrix $P \in GL_n(R)$ such that $P^\top M_1 P = M_2$.
	Given a symmetric matrix $M$ and an $R$-lattice $L$ (resp.~quadratic $F$-space $V$),
	we write $L\cong M$ (resp.~$V \cong M$) if the Gram matrix of $L$ (resp.~$V$) with respect to some basis is $M$. 	
	Furthermore, an $R$-lattice $L$ of rank $n$ is said to be \emph{unimodular} if $L \cong M$ for some symmetric matrix $M \in GL_n(R)$.
	
	In the whole paper, Let $S$ be the set of prime numbers.
	For each $p \in S$, let $\Z_p$ denote the ring of $p$-adic integers,
	$\Q_p$ the field of $p$-adic numbers,
	$\nu_p(a)$ the $p$-adic order of each $a \in \Q_p$,
	$|\cdot|_p$ the $p$-adic valuation,
	and the symbol $(\cdot,\cdot)_p$ the \emph{Hilbert symbol} over $\Q_p$.
	The set $\R$ of real numbers is denoted by $\Q_{\infty}$.
	For each odd prime number $p$,
	let $\delta_p$ denote one of non-square elements of $\Z_p^*$.
	Note that $\{ 1, \delta_p \}$ is a complete system of representatives of $\Z_p^*/(\Z_p^*)^2$.

	Let $L$ be a $\Z$-lattice on the $n$-dimensional $\Q$-quadratic space $V$;
	say $L:=\bigoplus_{i=1}^n\mathbb{Z}\mathbf{v}_i$.
	For each $p \in S \cup \{\infty\}$,
	we define the \emph{localization} $V_p$ of $V$ at $p$ to be the quadratic $\Q_p$-space $V\otimes\mathbb{Q}_p$.
	Moreover, we define the \emph{localization} $L_p$ of $L$ at $p$ to be the $\mathbb{Z}_p$-lattice on $V_p$ generated by $L$, that is,
	\[L_p=\bigoplus_{i=1}^n\mathbb{Z}_p\mathbf{v}_i.\]	
	In addition, for an orthogonal basis $(\bu_1,\ldots,\bu_n)$ of $V:=(V,B,q)$,
	the \emph{Hasse symbol} of $V$ and that of $L$ at $p$ are defined to be
	\begin{align*}
		S_p(V) = S_p(L) := \prod_{1 \leq i < j \leq n} (q(\bu_i),q(\bu_j))_p \in \{-1,1\}.
	\end{align*}
	The signature of $L$ (resp.~$V$) is denoted by $\sign(L)$ (resp.~$\sign(V)$),
	and $L$ (resp.~$V$) is said to be \emph{positive definite} if
	$\sign(L) = n$ (resp.\ $\sign(V)=n$).
	
	Let $L$ be an $R$-lattice on quadratic $F$-space.
	Then the $R$-module
	$
		sL:= \{ B(\bv,\bu) \mid \bv , \bu \in L \}
	$
	is called the \emph{scalar ideal} of $L$,
	and the $R$-modular $nL$ generalized by
	$
		\{ q(\bv) \mid \bv \in L \}
	$
	is called the \emph{norm ideal} of $L$.
	Note that $2(sL) \subseteq nL \subseteq sL$ and $nL = sL$ if $2 \in R^*$.
	
	A $\Z$-lattice $L$ is said to be \emph{integral} if $sL \subseteq\mathbb{Z}$.
	Moreover, an integral $\Z$-lattice $L$ is said to be \emph{even} if $nL\subseteq 2\mathbb{Z}$, otherwise \emph{odd}. Note that every positive definite integral $\Z$-lattice is isometric to a positive definite integral $\Z$-lattice in $\R^n$ equipped with the canonical bilinear form for some positive integer $n$.
	 For simplicity, we call a positive definite integral $\Z$-lattice equipped with the canonical bilinear form \emph{lattice}.
	
\section{Quadratic spaces}	\label{sec:quad}
In this section we introduce fundamental results for quadratic $\Q$-spaces and quadratic $\Q_p$-spaces.

\begin{thm}[{\cite[Theorem~4.29]{basic quadratic form}}]\label{thm:qp space eq condition}
    Let $p$ be a prime number.
    Two quadratic $\Q_p$-spaces $V$ and $W$ are isometric
    if and only if
    \[\dim(V)=\dim(W),~\det(V)=\det(W) \text{  and  } S_p(V)=S_p(W).\]
\end{thm}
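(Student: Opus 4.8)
The plan is to dispatch the ``only if'' direction directly and prove the ``if'' direction by induction on the common dimension, the heart of the matter being the question of when two $\Q_p$-spaces represent a common scalar. For ``only if'', suppose $V\simeq W$. Then $\dim(V)=\dim(W)$ trivially; an isometry identifies the Gram matrices via $G_W=P^\top G_V P$ for some $P\in\GL_n(\Q_p)$, so $\det(V)=\det(W)$ in $\Q_p^*/(\Q_p^*)^2$; and an isometry carries an orthogonal basis of $V$ to one of $W$ with equal norms, so $S_p(V)=S_p(W)$ once $S_p$ is known to be independent of the chosen orthogonal basis (which is the content of Witt's chain-equivalence theorem together with the bilinearity of the Hilbert symbol). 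Hence all three quantities are isometry invariants.

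For ``if'' I would induct on $n:=\dim(V)=\dim(W)$. When $n=1$, equal determinants give $V\cong\langle a\rangle$ and $W\cong\langle b\rangle$ with $a\equiv b\pmod{(\Q_p^*)^2}$, hence $V\simeq W$ (the Hasse symbol being an empty product). For the inductive step, the crux is to produce a scalar $a\in\Q_p^*$ represented by both $V$ and $W$; granting this, nondegeneracy lets me split off a vector of norm $a$ from each to write $V\simeq\langle a\rangle\perp V'$ and $W\simeq\langle a\rangle\perp W'$ with $\dim(V')=\dim(W')=n-1$. From $\det(V)=a\det(V')$ I obtain $\det(V')\equiv a\det(V)$ and likewise $\det(W')\equiv a\det(W)$, so $\det(V')=\det(W')$; and expanding the defining product over a diagonalising basis and using bilinearity of the Hilbert symbol yields $S_p(V)=(a,\det(V'))_p\,S_p(V')$, which, combined with $S_p(V)=S_p(W)$ and $\det(V')=\det(W')$, gives $S_p(V')=S_p(W')$. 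The inductive hypothesis then gives $V'\simeq W'$, and Witt cancellation upgrades this to $V\simeq W$.

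It remains to produce the common represented element, and this is where the real work lies. I would use the standard equivalence that $V$ represents $a$ if and only if $V\perp\langle -a\rangle$ is isotropic, together with the explicit isotropy criteria for $\Q_p$-spaces phrased in terms of $(\dim,\det,S_p)$. Since $\Q_p$ has $u$-invariant $4$, every $\Q_p$-space of dimension at least $5$ is isotropic, so for $n\geq 4$ both $V$ and $W$ represent every element of $\Q_p^*$ and a common $a$ is immediate. The delicate cases are $n=2$ and $n=3$, where I would read off the set of represented values directly from the isotropy criteria; because $V$ and $W$ carry identical invariants, these value sets coincide and are nonempty, furnishing the desired $a$. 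The main obstacle is precisely these low-dimensional isotropy and representation criteria---above all the binary case---which rest on careful bookkeeping with the Hilbert symbol and on the structure of $\Q_p^*/(\Q_p^*)^2$ (of order $4$ for odd $p$ and order $8$ for $p=2$); once they are in hand, all of the invariant-matching above is routine.
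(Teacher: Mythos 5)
The paper offers no proof of this statement at all: it is quoted as background from Gerstein~\cite[Theorem~4.29]{basic quadratic form}, so there is no internal argument to compare yours against. Your outline is the standard proof of this classification theorem (essentially the one in the cited source and in Serre/Cassels), and it is correct as far as it goes: the invariance of $S_p$ via Witt chain equivalence and bilinearity of the Hilbert symbol; the induction that splits off a commonly represented scalar $a$; the invariant bookkeeping $\det(V')\equiv a\det(V) \pmod{(\Q_p^*)^2}$ and $S_p(V)=(a,\det(V'))_p\,S_p(V')$, both of which check out; and universality of spaces of dimension at least $4$ from the $u$-invariant of $\Q_p$. One small simplification: Witt cancellation is not actually needed at the end, since you decompose both spaces with the same first factor $\langle a\rangle$ and conclude $V\simeq\langle a\rangle\perp V'\simeq\langle a\rangle\perp W'\simeq W$ directly from $V'\simeq W'$.

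The only respect in which your write-up is a sketch rather than a complete proof is the one you flag yourself: the representation criteria in dimensions $2$ and $3$, i.e.\ the facts (via ``$V$ represents $a$ iff $V\perp\langle -a\rangle$ is isotropic'') that the isotropy of ternary and quaternary $\Q_p$-spaces is decided by $(\dim,\det,S_p)$ alone, which is exactly what makes the value sets of $V$ and $W$ coincide. Proving these requires the structure of $\Q_p^*/(\Q_p^*)^2$ and the nondegeneracy of the Hilbert symbol, and is where the bulk of the work in the textbook treatment lives. Since the paper itself treats the entire theorem as citable, deferring those criteria to the literature is consistent with the paper's level of detail, and your argument contains no error.
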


\begin{thm}[{\cite[Theorem 4.32]{basic quadratic form}}]\label{thm:qp space existence condition}
	Let $p$ be a prime number.
    Then there exists a quadratic $\mathbb{Q}_p$-space $V$
    if and only if
    \begin{align}\label{thm:qp space existence condition:1}
        (\dim(V), S_p(V))\neq(1,-1) \text{ or } (\dim(V), \det(V), S_p(V))\neq(2,-1,-1).
    \end{align}
\end{thm}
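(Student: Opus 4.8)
The plan is to read the statement as a realization problem: writing $n = \dim(V)$, $d = \det(V) \in \Q_p^*/(\Q_p^*)^2$ and $\varepsilon = S_p(V) \in \{\pm 1\}$ for the prescribed invariants, I must show that a regular quadratic $\Q_p$-space with these invariants exists precisely when neither $(n,\varepsilon) = (1,-1)$ nor $(n,d,\varepsilon) = (2,-1,-1)$ occurs. The two directions are the \emph{necessity} of excluding these two cases and the \emph{sufficiency}, i.e.\ the explicit construction of a form in every remaining case. Throughout I would work with a diagonalization $\langle a_1, \ldots, a_n \rangle$ and use only the standard properties of the Hilbert symbol (symmetry, bilinearity, and the identities $(a,-a)_p = 1$ and $(a,a)_p = (a,-1)_p$), together with the facts that $S_p(\langle a_1, \ldots, a_n\rangle) = \prod_{i<j}(a_i,a_j)_p$ and that $\Q_p^*/(\Q_p^*)^2$ is a finite group of order at least $4$.

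For necessity I would dispatch the two forbidden cases directly. If $n = 1$ then $S_p(V)$ is an empty product, hence equal to $1$, so $(1,-1)$ cannot occur. If $n = 2$ and $d = -1$ in $\Q_p^*/(\Q_p^*)^2$, then a diagonalization $\langle a, b\rangle$ satisfies $ab \equiv -1$, so $b \equiv -a$ modulo squares; therefore $S_p(V) = (a,b)_p = (a,-a)_p = 1$, ruling out $(2,-1,-1)$.

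For sufficiency I would first settle rank $2$ and then induct. In rank $2$, to realize a target $(d,\varepsilon)$ I take $\langle a, da\rangle$, whose determinant is $d$ and whose Hasse symbol is $(a,da)_p = (a,d)_p(a,a)_p = (a,-d)_p$. The key input here is the nondegeneracy of the Hilbert pairing: when $-d \notin (\Q_p^*)^2$, i.e.\ $d \neq -1$, the homomorphism $a \mapsto (a,-d)_p$ is surjective onto $\{\pm 1\}$, so both values of $\varepsilon$ are attained; when $d = -1$ it is identically $1$, recovering exactly the forbidden case. For $n \geq 3$ I would write $V = \langle a \rangle \perp W$ with $\rank(W) = n-1 \geq 2$, so that $\det(V) = a\det(W)$ and $S_p(V) = (a,\det(W))_p\, S_p(W)$. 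Given a target $(d,\varepsilon)$, I choose any square class $d_0 \neq -1$, set $a := d\,d_0^{-1}$ so that $\det(V) = d$, and then realize $W$ of rank $n-1$ with $\det(W) = d_0$ and $S_p(W) = \varepsilon\,(a,d_0)_p$; this is possible by the rank-$2$ case (when $n-1 = 2$, using $d_0 \neq -1$ to avoid the forbidden pair) or by the induction hypothesis (when $n-1 \geq 3$). Theorem~\ref{thm:qp space eq condition} then pins down the constructed form uniquely, although only its existence is needed here.

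The main obstacle is the nondegeneracy statement that $a \mapsto (a,c)_p$ is surjective onto $\{\pm 1\}$ for every nonsquare $c \in \Q_p^*$; this is what makes the rank-$2$ construction and hence the whole induction work, and it is exactly the point where the arithmetic of $\Q_p$ enters. Establishing it requires the explicit evaluation of the Hilbert symbol on representatives of $\Q_p^*/(\Q_p^*)^2$, which is routine for odd $p$ but genuinely delicate for $p = 2$, where the square-class group has order $8$ and the pairing must be tabulated with care.
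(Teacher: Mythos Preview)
The paper does not supply its own proof of this theorem: it is quoted verbatim as \cite[Theorem~4.32]{basic quadratic form} and used as a black box throughout Sections~\ref{sec:maximal} and~\ref{sec:embedding theory}. So there is no in-paper argument to compare against.

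Your proof is correct and is essentially the standard textbook argument (indeed close to the one in Gerstein). The necessity step is exactly right: the empty product forces $S_p = 1$ in rank~$1$, and the identity $(a,-a)_p = 1$ forces $S_p = 1$ whenever $n=2$ and $d=-1$. For sufficiency, your rank-$2$ construction $\langle a, da\rangle$ with Hasse symbol $(a,-d)_p$ is the clean way to do it, and the induction for $n\geq 3$ via the splitting $V = \langle a\rangle \perp W$ with a freely chosen $d_0 \neq -1$ is correct; the point that $\lvert \Q_p^*/(\Q_p^*)^2\rvert \geq 4$ guarantees such a $d_0$ exists. Your identification of the one genuinely arithmetic input---nondegeneracy of the Hilbert pairing, i.e.\ that $a \mapsto (a,c)_p$ is onto $\{\pm 1\}$ for every nonsquare $c$---is also accurate, and you are right that the $p=2$ case is where the work lies.

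One small remark: the paper's displayed condition~\eqref{thm:qp space existence condition:1} is written with ``or'' where logically it should be ``and'' (the existence condition is that \emph{neither} forbidden triple occurs); you have silently read it the intended way, which is correct.
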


For every $a,b \in \Q^*$, $(a,b)_p=1$ for almost all $p$, that is, there is a finite set $S'$ such that $(a,b)_p=1$ for every $p    \in S \setminus S'$.
Moreover,
\begin{equation}\label{eq:hasse}
    \prod_{p\in S\cup\{\infty\}}(a,b)_p=1
\end{equation}
holds~(see~\cite[Theorem 5.2]{basic quadratic form}).
This immediately implies the following lemma:

\begin{lem}[{\cite[Corollary 5.3]{basic quadratic form}}]\label{hasse symbol}
	$\prod_{p\in S\cup\{\infty\}}S_p(V_p)=1$
	for every nonzero quadratic $\mathbb{Q}$-space $V$.
\end{lem}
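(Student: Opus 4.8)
The plan is to reduce this product formula for Hasse symbols to the product formula for Hilbert symbols recorded in \eqref{eq:hasse}. First I would fix an orthogonal basis $(\bu_1,\ldots,\bu_n)$ of $V$ and set $a_i := q(\bu_i) \in \Q^*$, so that by the very definition of the Hasse symbol,
\begin{align*}
	S_p(V_p) = \prod_{1 \leq i < j \leq n} (a_i, a_j)_p
\end{align*}
for every $p \in S \cup \{\infty\}$.

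The heart of the argument is then a single interchange of products:
\begin{align*}
	\prod_{p \in S \cup \{\infty\}} S_p(V_p)
	= \prod_{p \in S \cup \{\infty\}} \prod_{1 \leq i < j \leq n} (a_i, a_j)_p
	= \prod_{1 \leq i < j \leq n} \prod_{p \in S \cup \{\infty\}} (a_i, a_j)_p.
\end{align*}
Applying \eqref{eq:hasse} to each fixed pair $(a_i, a_j)$ shows that every inner product over $p$ equals $1$, and since the outer product then runs over factors all equal to $1$, the entire expression is $1$, as desired.

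The only step requiring care — which I would treat as the (mild) main obstacle — is justifying the rearrangement, since the product over $p$ ranges over the infinite set $S \cup \{\infty\}$. This is settled by the observation recalled immediately before the statement: for each fixed pair $(a_i, a_j) \in \Q^* \times \Q^*$ one has $(a_i, a_j)_p = 1$ for all but finitely many $p$. As there are only finitely many pairs $1 \leq i < j \leq n$, it follows that $S_p(V_p) = 1$ for all but finitely many $p$; hence both the outer product and each inner product are genuinely finite (all but finitely many factors being trivial), so the interchange of the two products is legitimate and the computation above is valid.
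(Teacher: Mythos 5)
Your proposal is correct and is essentially the paper's own argument: the paper derives the lemma ``immediately'' from \eqref{eq:hasse} by expressing each $S_p(V_p)$ as $\prod_{i<j}(a_i,a_j)_p$ for a fixed rational orthogonal basis and applying the Hilbert symbol product formula pairwise, exactly as you do. Your explicit justification of the product interchange via the almost-everywhere triviality of $(a_i,a_j)_p$ is a careful spelling-out of the step the paper leaves implicit.
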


\begin{thm}[{\cite[Corollary 5.9]{basic quadratic form}}]\label{q eq}
    Let $V$ and $W$ be two quadratic $\mathbb{Q}$-spaces.
    Then $V \simeq W$
    if and only if
    $V_\infty \simeq W_\infty$
    and $V_p \simeq W_p$ for each prime number $p$.
\end{thm}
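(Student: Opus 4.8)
The forward implication is immediate, since localization is functorial and carries an isometry $V\simeq W$ to isometries $V_v\simeq W_v$ at every place $v\in S\cup\{\infty\}$. For the converse the plan is to convert local data into global invariants and then induct on dimension. First I would use Theorem~\ref{thm:qp space eq condition} (and its real analogue) to read off from $V_v\simeq W_v$ that $\dim V=\dim W=:n$, that the signatures agree, and that $\det V_v=\det W_v$ together with $S_v(V)=S_v(W)$ for every $v$. From the local determinants I would conclude $\det V=\det W$ in $\Q^*/(\Q^*)^2$ via the elementary observation that a rational number which is a square in $\R$ and in every $\Q_p$ is a square in $\Q$.

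The engine is induction on $n$. When $n=1$, writing $V\cong\langle a\rangle$ and $W\cong\langle b\rangle$, the local isometries say that $a/b$ is a square in every completion, hence in $\Q$, so $V\simeq W$. For the inductive step I would choose $a\in\Q^*$ represented by $V$ and split $V\simeq\langle a\rangle\perp V'$. Localizing, $a$ is represented by each $V_v\simeq W_v$, so the decisive point is to promote this to a \emph{global} representation, giving $W\simeq\langle a\rangle\perp W'$. Granting that, local Witt cancellation (valid over each $\Q_v$) yields $V'_v\simeq W'_v$ for all $v$, and the induction hypothesis forces $V'\simeq W'$, whence $V\simeq W$.

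Thus everything reduces to the local--global principle for representing a scalar, equivalently to the Hasse--Minkowski isotropy principle: a quadratic $\Q$-space is isotropic as soon as it is isotropic over $\R$ and over every $\Q_p$. Here $W$ represents $a$ exactly when $\langle -a\rangle\perp W$ is isotropic. I would prove the isotropy principle by induction on the dimension $d$ of the space in question: for $d\le 2$ it collapses once more to ``a local square everywhere is a global square''; the case $d=3$ is the heart, where I would encode isotropy of $\langle a_1,a_2,a_3\rangle$ in terms of Hilbert symbols and use the product formula \eqref{eq:hasse} (equivalently Lemma~\ref{hasse symbol}) to manufacture a global isotropic vector from the local ones; the case $d=4$ descends to the ternary case by comparing the two binary summands across all completions; and $d\ge 5$ is settled by a dimension-reducing argument that exhibits a value represented both globally and compatibly with all signs and $p$-adic constraints. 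Theorem~\ref{thm:qp space existence condition} enters to certify that the auxiliary local spaces produced along the way actually exist.

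The main obstacle is precisely this isotropy principle in dimensions $3$ and $\ge 5$: the low-dimensional bookkeeping and the cancellation steps are routine, but passing from local isotropic vectors to a genuine rational one requires the two deep arithmetic inputs, namely Hilbert reciprocity --- already available here as the product formula \eqref{eq:hasse}/Lemma~\ref{hasse symbol} --- and Dirichlet's theorem on primes in arithmetic progressions, which governs the descent when $d\ge 5$. Once the isotropy principle is in place, the representation theorem, local Witt cancellation, and the dimension induction assemble into the statement with no further difficulty.
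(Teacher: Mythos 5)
The paper offers no proof of this statement at all: it is imported verbatim as the weak Hasse--Minkowski principle from the cited reference (Gerstein, Corollary~5.9), so there is no internal argument to compare yours against. Judged on its own merits, your outline reproduces the standard architecture of the proof in that reference and in Serre's and Cassels' books: deduce equality of dimension, signature, determinant and all Hasse symbols from Theorem~\ref{thm:qp space eq condition}, induct on dimension via splitting off a represented scalar, cancel with local Witt cancellation, and reduce the global representation step to the local--global isotropy principle, itself proved by cases on the dimension $d$. Two placements of the deep inputs are off, however. First, in the ternary case the product formula \eqref{eq:hasse} (Lemma~\ref{hasse symbol}) cannot by itself ``manufacture a global isotropic vector'': reciprocity is only a coherence constraint among the local symbols (it says the number of bad places is even), whereas isotropy of $\langle 1,-a,-b\rangle$ from everywhere-local isotropy is exactly Legendre's theorem and requires a genuine descent (or geometry-of-numbers) argument; this is the arithmetic heart and your sketch glosses it. Second, in the standard arrangement Dirichlet's theorem on primes in arithmetic progressions enters at $d=4$, through the theorem producing a rational number with prescribed Hilbert symbols $(x,-ab)_v$ and $(x,-cd)_v$ at all places, while the case $d\ge 5$ is settled by weak approximation together with the openness of the set of locally represented values (and unimodularity of the rank~$\ge 3$ complement at almost all $p$), with no Dirichlet needed there. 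Neither misplacement breaks the plan --- you correctly identify reciprocity and Dirichlet as the two deep ingredients --- but as written the sketch leaves the decisive cases $d=3$ and $d=4$ at the level of intention rather than proof, which is consistent with the paper's own choice to cite this theorem rather than reprove it.
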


\begin{thm}[{\cite[Theorem 1.3]{cassels}}]\label{thm:existence of q space}
    Let $n\geq2$ and $d\in\mathbb{Q}^*$.
    For each $p\in S\cup\{\infty\}$, let $V_{(p)}$ be an $n$-dimensional quadratic  $\mathbb{Q}_p$-space and suppose that
    \begin{enumerate}
        \item \label{thm:existence of q space:1}
        	$\det(V_{(p)})\in d\mathbb{Q}_p^{*2}$,
        \item \label{thm:existence of q space:2}
        	$\prod_{p\in S\cup\{\infty\}}S_p(V_{(p)})=1$, and $S_p(V_{(p)})=1$ for almost all $p$.
    \end{enumerate}
    Then there exists a quadratic $\mathbb{Q}$-space $V$ with
    $\det(V)=d$,
    $\sign(V)= \sign(V_{(\infty)})$
    and
    $V_p \simeq V_{(p)}$ for each $p\in S$.
\end{thm}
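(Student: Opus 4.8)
The plan is to reduce the assertion to matching a finite list of local invariants and then to realize those invariants globally by induction on $n$. By Theorem~\ref{thm:qp space eq condition}, for a finite prime $p$ the condition $V_p \simeq V_{(p)}$ is equivalent to $\det(V)=\det(V_{(p)})$ in $\Q_p^*/(\Q_p^*)^2$ together with $S_p(V)=S_p(V_{(p)})$; and since a quadratic $\R$-space is determined by its signature (Sylvester), the condition $\sign(V)=\sign(V_{(\infty)})$ forces $V_\infty \simeq V_{(\infty)}$. Writing $\epsilon_p := S_p(V_{(p)})$ and $\sigma := \sign(V_{(\infty)})$, the task therefore becomes: construct a regular quadratic $\Q$-space $V$ of dimension $n$ with $\det(V)=d$, with $\sign(V)=\sigma$, and with $S_p(V)=\epsilon_p$ for every $p$, given that $\prod_{p\in S\cup\{\infty\}}\epsilon_p=1$ and $\epsilon_p=1$ for almost all $p$. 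I would prove this by induction on $n\ge 2$.

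For the inductive step ($n\ge 3$) the idea is to split off a rational one-dimensional subspace compatible with every $V_{(p)}$ simultaneously. First I would choose a scalar $c\in\Q^*$ represented by $V_{(p)}$ for all $p\in S\cup\{\infty\}$. Over $\Q_p$ a regular space of dimension $\ge 3$ is universal except at finitely many primes (where a single square class may be omitted) and at $\infty$ (where the sign of $c$ is constrained by $\sigma$); collecting these into a finite set $T$ and prescribing the square class of $c$ at each place of $T$, Dirichlet's theorem on primes in arithmetic progressions furnishes such a $c$, which is then automatically represented at every place outside $T$ by universality. Having fixed $c$, each local space splits as $V_{(p)}\simeq\langle c\rangle\perp W_{(p)}$, where $W_{(p)}$ is an $(n-1)$-dimensional regular space with
\begin{equation*}
    \det(W_{(p)})=c\,d \quad\text{in } \Q_p^*/(\Q_p^*)^2, \qquad S_p(W_{(p)})=\epsilon_p\,(c,\,c\,d)_p .
\end{equation*}

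It remains to check that the residual data again satisfies the hypotheses. Finiteness is clear, since $\epsilon_p=1$ and $(c,cd)_p=1$ for almost all $p$; and the product formula is exactly where reciprocity enters:
\begin{equation*}
    \prod_{p\in S\cup\{\infty\}} S_p(W_{(p)})
    = \Big(\prod_{p\in S\cup\{\infty\}}\epsilon_p\Big)\Big(\prod_{p\in S\cup\{\infty\}}(c,\,c\,d)_p\Big)
    = 1\cdot 1 = 1,
\end{equation*}
using the hypothesis on $(\epsilon_p)$ and Hilbert reciprocity \eqref{eq:hasse}. The induction hypothesis then yields a rational $(n-1)$-space $W$ with $\det(W)=cd$, with $\sign(W)=\sign(W_{(\infty)})$, and with $W_p\simeq W_{(p)}$ for all finite $p$. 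Setting $V:=\langle c\rangle\perp W$ and comparing invariants place by place via Theorem~\ref{thm:qp space eq condition} gives $V_p\simeq V_{(p)}$, while $\det(V)=c\cdot cd=d$ and $\sign(V)=\sign(\langle c\rangle)+\sign(W_{(\infty)})=\sign(V_{(\infty)})=\sigma$. For the base case $n=2$ one cannot split off a scalar and re-apply the hypothesis, so I would realize the binary form directly: seek $V=\langle a, d/a\rangle$, whose determinant is $d$ and whose Hasse symbol at $p$ equals the Hilbert symbol $(a,-d)_p$, and solve the resulting finite system of local square-class conditions on $a$ (together with the sign condition from $\sigma$) again by Dirichlet's theorem, the global identity \eqref{eq:hasse} guaranteeing that the conditions are jointly consistent.

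The main obstacle is the existence of the global scalar $c$ (equivalently, the binary realization in the base case): one must produce a single rational number lying in prescribed $p$-adic square classes at finitely many places and of prescribed sign at $\infty$, and this is precisely where Dirichlet's theorem on primes in arithmetic progressions is indispensable. Everything else is bookkeeping with Hilbert symbols, the only nontrivial structural input being Hilbert reciprocity \eqref{eq:hasse}, which is what makes the inductive descent preserve the compatibility conditions automatically.
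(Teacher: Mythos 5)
This statement is quoted from Cassels~\cite{cassels} and the paper supplies no proof of its own, so there is no internal proof to compare against; your argument is precisely the standard proof from the cited literature (induction on $n$, splitting off a scalar $c$ represented by every $V_{(p)}$ simultaneously via Dirichlet's theorem plus local universality outside a finite set, with Hilbert reciprocity \eqref{eq:hasse} preserving the product condition for the residual data, and a direct binary construction at $n=2$), and it is correct, including the invariant bookkeeping $\det(W_{(p)})=cd$ and $S_p(W_{(p)})=\epsilon_p(c,cd)_p$. The one point you should make explicit in the base case is that solving $(a,-d)_p=\epsilon_p$ for a global $a$ requires, besides reciprocity and almost-all-triviality, the \emph{local} solvability at each individual $p$ (when $-d\in(\Q_p^*)^2$ the symbol $(a,-d)_p$ is identically $1$, so $\epsilon_p=-1$ would be fatal); this is guaranteed here because each $V_{(p)}$ exists as a genuine binary space, which by Theorem~\ref{thm:qp space existence condition} excludes $(\dim(V),\det(V),S_p(V))=(2,-1,-1)$.
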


\section{Maximality and existence of $\Z_p$-lattices}		\label{sec:maximal}
Let $A$ be a fractional $R$-ideal, that is, $A \subseteq F$ is an $R$-module and $a A \subseteq R$ for some $a \in R$.
An $R$-lattice $L$ on a quadratic $F$-space $V$ is \emph{$A^{(n)}$-maximal} (resp. \emph{$A^{(s)}$-maximal}) if $nL\subseteq A$ (resp. $sL\subseteq A$) and for any $R$-lattice $M$ on $V$ containing $L$, $nM\subseteq A$ (resp. $sM\subseteq A$) implies $M=L$.
Note that, for every odd prime number $p$, a $\mathbb{Z}_p$-lattice is $A^{(n)}$-maximal if and only if it is $A^{(s)}$-maximal.
The following lemma shows that $\Z^{(s)}$-maximal $\Z$-lattices and $\Z_p^{(s)}$-maximal $\Z_p$-lattices are closely related.
\begin{lem}[{\cite[Lemma~9.8]{basic quadratic form}}]\label{local-global max}
	Let $L$ be a $\mathbb{Z}$-lattice on a quadratic $\mathbb{Q}$-space $V$, and let $A$ be a fractional $\mathbb{Z}$-ideal.
	Then $L$ is $A^{(s)}$-maximal (resp.~$A^{(n)}$-maximal) if and only if $L_p$ is $A^{(s)}_p$-maximal (resp.~$A^{(n)}_p$-maximal) for each prime number $p$.
\end{lem}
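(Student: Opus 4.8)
The plan is to deduce both equivalences from two standard local--global dictionaries for $\Z$-lattices on the fixed space $V$, together with one gluing construction. Throughout I treat the scalar-ideal statement; the norm-ideal statement is word-for-word identical once one records the analogous compatibility below, so I would only remark on it at the end.

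First I would record the compatibility of the ideal operators with localization: for any $\Z$-lattice $K$ on $V$ and any prime $p$ one has $(sK)_p = s(K_p)$ (and likewise $(nK)_p = n(K_p)$), because $K_p = K\otimes_\Z\Z_p$ is generated over $\Z_p$ by a $\Z$-basis of $K$, so the values $B(\bv,\bu)$ over $K_p$ span exactly $(sK)\Z_p$. I would then invoke the two local--global principles for lattices on $V$: (a) for fractional ideals, $I \subseteq J$ if and only if $I_p \subseteq J_p$ for every $p$; and (b) every $\Z$-lattice satisfies $M = \bigcap_p (V \cap M_p)$, so that $M \subseteq N$ (resp.\ $M = N$) if and only if $M_p \subseteq N_p$ (resp.\ $M_p = N_p$) for every $p$, and moreover $M_p = N_p$ for all but finitely many $p$.

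Granting these, the reverse implication is the short one. Assume each $L_p$ is $A^{(s)}_p$-maximal. From $s(L_p) \subseteq A_p$ for all $p$ and principle (a) I obtain $sL \subseteq A$. If $M \supseteq L$ is any $\Z$-lattice on $V$ with $sM \subseteq A$, then $M_p \supseteq L_p$ and $s(M_p) = (sM)_p \subseteq A_p$ for each $p$; maximality of $L_p$ forces $M_p = L_p$ for every $p$, whence $M = L$ by principle (b). Thus $L$ is $A^{(s)}$-maximal.

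The forward implication is where the real work sits, and the main obstacle is manufacturing a \emph{global} competitor from a purely local over-lattice. Assume $L$ is $A^{(s)}$-maximal and fix a prime $p$; localizing $sL \subseteq A$ gives $s(L_p) = (sL)_p \subseteq A_p$. Now let $K$ be any $\Z_p$-lattice on $V_p$ with $K \supseteq L_p$ and $sK \subseteq A_p$; I must show $K = L_p$. The key step is to patch $K$ into the global picture: by principle (b) there is a unique $\Z$-lattice $M$ on $V$ with $M_p = K$ and $M_q = L_q$ for all $q \neq p$ — this compatible family differs from the localizations of $L$ at only one prime, so the almost-all hypothesis needed for gluing is automatic. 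Then $M \supseteq L$ (checked locally), and $sM \subseteq A$ because $(sM)_p = sK \subseteq A_p$ while $(sM)_q = s(L_q) \subseteq A_q$ for $q \neq p$, so principle (a) applies. Global maximality of $L$ now yields $M = L$, and localizing at $p$ gives $K = M_p = L_p$, as required. I expect the only delicate point to be the existence and uniqueness of the glued lattice $M$ with prescribed localizations, which rests on the description $M = \bigcap_q (V \cap M_q)$ and on the fact that altering $L$ at a single prime preserves the lattice property; the scalar-ideal bookkeeping is then routine, and the same argument with $n(\cdot)$ in place of $s(\cdot)$ settles the $A^{(n)}$ case.
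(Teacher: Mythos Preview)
The paper does not give its own proof of this lemma; it is simply quoted from Gerstein's textbook (Lemma~9.8 there), so there is nothing to compare against on the paper's side. Your argument is correct and is essentially the standard textbook proof: localize the scalar/norm ideal, use that containment of lattices and of fractional ideals is detected locally, and for the forward direction glue a local over-lattice into a global one. The gluing step you flag as the only delicate point is exactly the content of \cite[Theorem~9.4]{basic quadratic form}, which the paper in fact records separately as Theorem~\ref{local modify}; citing that result would make your existence-of-$M$ step airtight rather than merely plausible.
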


\subsection{The isometry classes of $\Z_p^{(s)}$-maximal $\Z_p$-lattices for an odd prime number $p$}

The following theorem immediately gives the $\Z_p^{(s)}$-maximal $\Z_p$-lattices up to isometric.

\begin{thm}[{\cite[Theorem 8.8]{basic quadratic form}}]\label{unique maximal}
    Let $F$ be a field with a complete discrete valuation $|\quad|$, and let $R$ be the associated valuation ring. Suppose $V$ is a (regular) quadratic $F$-space and $A$ is a fractional $R$-ideal. Then there is only one isometry class of $A^{(n)}$-maximal $R$-lattices on $V$.
\end{thm}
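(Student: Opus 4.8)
The plan is to prove uniqueness by reducing to a normalized norm ideal and then inducting on $\dim(V)$ through a Witt-type decomposition into hyperbolic planes and an anisotropic kernel. Write $\pi$ for a uniformizer of $R$. Since $R$ is a complete discrete valuation ring, it is a principal ideal domain and every fractional ideal has the form $A = \pi^{k}R$ for some $k \in \Z$. Replacing the quadratic form $q$ by $\pi^{-k}q$ carries $A^{(n)}$-maximal lattices to $R^{(n)}$-maximal lattices bijectively and preserves isometry classes, so I may assume $A = R$.

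First I would dispose of the anisotropic case. If $V$ is anisotropic, set $L_0 := \{\bx \in V : q(\bx) \in R\}$. Because $V$ is anisotropic and $F$ is complete, the valuation of $q$ attains a minimum on any reference lattice, which confines $L_0$ between two lattices and forces it to be a lattice on $V$ with $nL_0 \subseteq R$. Any $R^{(n)}$-maximal lattice $L$ satisfies $L \subseteq L_0$ by definition of the norm ideal, and maximality together with $nL_0 \subseteq R$ then forces $L = L_0$. Hence $L_0$ is the unique $R^{(n)}$-maximal lattice, the uniqueness being immediate since $L_0$ is intrinsically defined.

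Next I would treat the isotropic case by induction on $\dim(V)$. The key step is to show that every $R^{(n)}$-maximal lattice $L$ splits orthogonally as $L = (R\be \oplus R\bv) \perp (L \cap H^{\perp})$, where $\be \in L$ is a primitive isotropic vector, $\bv \in L$ is chosen so that $B(\be,\bv)$ generates $sL$, and $H := \Span(\be,\bv)$ is a hyperbolic plane on which $R\be \oplus R\bv$ is the unique maximal lattice. Granting the splitting, given two $R^{(n)}$-maximal lattices $L$ and $M$ one obtains hyperbolic planes $H, H'$ with isometric maximal pieces; Witt cancellation yields $H^{\perp} \simeq H'^{\perp}$, and the induction hypothesis gives $L \cap H^{\perp} \simeq M \cap H'^{\perp}$. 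Assembling these isometries into an isometry of $V$ carries $L$ onto $M$.

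The main obstacle is the orthogonal splitting in the isotropic case: choosing the hyperbolic pair $(\be,\bv)$ inside $L$ so that the norm ideal condition is exactly inherited by both summands, and verifying that $L \cap H^{\perp}$ is again $R^{(n)}$-maximal rather than merely maximal for a smaller norm. In the dyadic case $2 \notin R^{*}$ the relations $2(sL) \subseteq nL \subseteq sL$ make the interplay between scale and norm ideals delicate, and this is where the argument needs the most care. Completeness is used essentially in the anisotropic base case to guarantee that $L_0$ is a genuine lattice.
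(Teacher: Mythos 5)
The paper contains no proof to compare against here: this theorem is quoted verbatim from Gerstein's book (Theorem 8.8), so the only meaningful benchmark is the standard textbook argument (Gerstein, or O'Meara 91:1--91:2) --- and your outline is in fact exactly that argument: rescale to $A = R$, settle the anisotropic case via the intrinsically defined $L_0$, and in the isotropic case split off an adapted hyperbolic plane, then induct using Witt cancellation. Your reduction to $A=R$ and the assembly step are fine, and one of the two obstacles you flag, namely that $L \cap H^{\perp}$ is again $R^{(n)}$-maximal on $H^{\perp}$, is actually the easy half: norm ideals add over orthogonal sums, so any enlargement of $L \cap H^{\perp}$ with norms in $R$ would enlarge $L$ itself, contradicting maximality of $L$.

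There are, however, two genuine gaps. First, your justification of the anisotropic base case is wrong as stated: that $L_0 = \{\bx \in V : q(\bx) \in R\}$ is squeezed between two lattices does not make it a lattice. The set $L_0$ is obviously stable under $R$-scaling, but closure under addition is the whole point, and a subset of $V$ trapped between two lattices need not be an $R$-module. The real content of the base case --- and the place where completeness genuinely enters --- is the lemma that on an anisotropic space over a complete discretely valued field one has $|q(\bx+\by)| \le \max(|q(\bx)|,|q(\by)|)$ (equivalently, $|B(\bx,\by)| \le 1$ whenever $|q(\bx)|,|q(\by)| \le 1$); its proof takes a violating pair, observes that the binary subspace it spans is congruent to a hyperbolic plane to high $\pi$-adic precision, and manufactures an isotropic vector by successive approximation, contradicting anisotropy. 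Boundedness of $L_0$ (your ``minimum of the valuation'' remark) is a consequence of this lemma, not a substitute for it. Second, in the isotropic case you have named, not proved, the splitting $L = (R\be \oplus R\bv) \perp (L \cap H^{\perp})$: the existence of $\bv \in L$ with $B(\be,\bv)$ generating $sL$ needs an argument (a priori $B(\be,L)$ could be a proper subideal of $sL$), and the dyadic normalization is delicate exactly as you fear --- for instance, when $A = R$ and $2 \notin R^{*}$ the maximal lattice on a hyperbolic plane has Gram matrix $\BB{0}{1/2}{1/2}{0}$, so it is not even of integral scale. This splitting is the technical heart of the theorem, and leaving it as ``the main obstacle'' leaves the proof incomplete rather than merely terse.
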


We set $F:=\Q_p$, $R:=\Z_p$ and $|\cdot| := |\cdot|_p$ for each odd prime number $p$, and then apply this theorem to $\Z_p$-lattices.
Since the existence of quadratic $\Q_p$-spaces are asserted in Theorem~\ref{thm:qp space existence condition}, we derive the following proposition.

\begin{prop}[{\cite[Theorem~4 for odd prime number $p$]{CS1989}}]  \label{prop:zp-maximal}
    Given an odd prime number $p$, there exists a unique $\Z_p^{(s)}$-maximal $\Z_p$-lattice on a quadratic $\Q_p$-space $V$
    if and only if
    condition~\eqref{thm:qp space existence condition:1} is satisfied.
\end{prop}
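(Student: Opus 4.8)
The plan is to deduce the proposition directly from Theorem~\ref{unique maximal} together with the existence criterion for quadratic $\Q_p$-spaces of Theorem~\ref{thm:qp space existence condition}, using the fact recorded in Section~\ref{sec:maximal} that, for an odd prime number $p$, a $\Z_p$-lattice is $\Z_p^{(n)}$-maximal if and only if it is $\Z_p^{(s)}$-maximal (which holds because $2 \in \Z_p^*$, so that $nL = sL$ for every $\Z_p$-lattice $L$). Throughout I read the statement as concerning a prescribed triple of invariants $(\dim(V), \det(V), S_p(V))$.

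First I would treat the backward implication. Assuming condition~\eqref{thm:qp space existence condition:1} holds for the prescribed invariants, Theorem~\ref{thm:qp space existence condition} furnishes a quadratic $\Q_p$-space $V$ realizing them. I then set $F := \Q_p$, $R := \Z_p$ and $|\cdot| := |\cdot|_p$, so that $F$ is complete with respect to the discrete valuation $|\cdot|_p$ and $R$ is its valuation ring, and take the fractional $R$-ideal $A := \Z_p$. Theorem~\ref{unique maximal} then gives exactly one isometry class of $\Z_p^{(n)}$-maximal $\Z_p$-lattices on $V$; since $p$ is odd, this class coincides with the class of $\Z_p^{(s)}$-maximal lattices on $V$, so a $\Z_p^{(s)}$-maximal lattice exists and is unique up to isometry. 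For the forward implication I would argue contrapositively, or more simply observe that the very existence of a $\Z_p^{(s)}$-maximal lattice on $V$ presupposes the existence of its ambient space $V$; the invariants $(\dim(V), \det(V), S_p(V))$ of that space must then satisfy the necessary condition of Theorem~\ref{thm:qp space existence condition}, which is precisely~\eqref{thm:qp space existence condition:1}.

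The main point to make carefully is conceptual rather than computational: the uniqueness asserted in the proposition is already supplied by Theorem~\ref{unique maximal} the moment the ambient space $V$ exists, so the genuine content of the equivalence resides entirely in the existence of $V$, which~\eqref{thm:qp space existence condition:1} governs. I do not anticipate a serious obstacle; the only step demanding attention is the passage between $\Z_p^{(n)}$-maximality and $\Z_p^{(s)}$-maximality, which is valid exactly because $p$ is odd, and the bookkeeping that a $\Z_p^{(s)}$-maximal lattice cannot exist without its underlying quadratic $\Q_p$-space existing first.
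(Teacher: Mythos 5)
Your proposal is correct and follows essentially the same route as the paper, which likewise derives the proposition by specializing Theorem~\ref{unique maximal} to $F=\Q_p$, $R=\Z_p$, $A=\Z_p$ and invoking Theorem~\ref{thm:qp space existence condition} for the existence of the ambient space, with the equivalence of $\Z_p^{(n)}$- and $\Z_p^{(s)}$-maximality for odd $p$ handled exactly as you note. Your explicit treatment of the forward implication and of the reading in terms of prescribed invariants $(\dim(V),\det(V),S_p(V))$ merely spells out what the paper leaves implicit.
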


\begin{example}\label{de: max zp lattice}
	Let $p$ be an odd prime number and $n$ a positive integer.
	A complete system of representatives of isometry classes of $\Z_p^{(s)}$-maximal $\mathbb{Z}_p$-lattices are enumerated by
	$\Z_p$-lattices $H^p_{n,d,\varepsilon}$ of rank $n$, determinant $d$ and Hasse symnol $\varepsilon$ defined as follows:
	\begin{gather*}
		\begin{aligned}
			&H^p_{n,1,1}\cong I_n,
			&&H^p_{n,\delta_p,1}\cong I_{n-1}\oplus(\delta_p),\\
			&H^p_{n,p,1}\cong I_{n-1}\oplus(p),
			&&H^p_{n,p \delta_p, 1}\cong I_{n-1}\oplus(p\delta_p),\\
			&H^p_{n,p \delta_p, -1}\cong I_{n-2}\oplus(\delta_p)\oplus(p)~(n \geq 2),
			&& H^p_{n,p,-1}\cong I_{n-2}\oplus(\delta_p)\oplus(p\delta_p)~(n \geq 2),\\
		\end{aligned}
		\\
		\begin{aligned}
			H^p_{n,-\delta_p,-1}
			&\cong
			\begin{cases}
				I_{n-2}\oplus(p)\oplus(p\delta_p)& \text{ if } p\equiv1\pmod4,\\
				I_{n-2}\oplus(p)\oplus(p) & \text{ if } p\equiv3\pmod4,
			\end{cases}
			~(n \geq 2),	\\
			H^p_{n,-1,-1}
			&\cong
			\begin{cases}
				I_{n-3}\oplus(\delta_p)\oplus(p)\oplus(p\delta_p)& \text{ if } p\equiv1\pmod4,\\
				I_{n-3}\oplus(\delta_p)\oplus(p)\oplus(p) & \text{ if } p\equiv3\pmod4,
			\end{cases}
			~(n \geq 3).
		\end{aligned}
	\end{gather*}
\end{example}

\subsection{Maximality and existence of $\Z_2$-lattices}
In this subsection we introduce fundamental results for $\Z_2$-lattices. First, adopting a similar way as the proof of \cite[PROPOSITION 2]{CS1988}, we can obtain the following result:
\begin{prop}[{Cf.~\cite[PROPOSITION 2]{CS1988}}]	\label{prop:max z2 lattice det}
	If a $\mathbb{Z}_2$-lattice $L$ is $\mathbb{Z}_2^{(s)}$-maximal, then $\nu_2(\det(L))=0$ or $1$.
\end{prop}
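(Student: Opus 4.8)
The plan is to argue by contradiction using the Jordan decomposition of $L$ over $\Z_2$. Write $L=\perp_{a\ge 0}2^aL_a$, where each $L_a$ is a (possibly zero) $\Z_2$-unimodular lattice; integrality of $L$, i.e.\ $sL\subseteq\Z_2$, forces every nonzero summand to have scale $2^a$ with $a\ge 0$. Since the scalar ideal of a $\Z_2$-unimodular lattice is all of $\Z_2$, one has $\nu_2(\det L)=\sum_{a\ge 0}a\,\rank(L_a)=\sum_{a\ge 1}a\,\rank(L_a)$, so it suffices to show that $\Z_2^{(s)}$-maximality prevents this sum from being $\ge 2$. I would record at the outset the structure theorem for $\Z_2$-unimodular lattices: each $L_a$ is an orthogonal sum of rank-one blocks $(u)$ with $u\in\Z_2^*$ and of the even binary blocks $\BB{0}{1}{1}{0}$ and $\BB{2}{1}{1}{2}$. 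This is what makes the argument delicate, since the even binary blocks cannot be diagonalized over $\Z_2$.

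The heart of the proof is a short list of enlargement moves, each producing a $\Z_2$-lattice on the same space that strictly contains $L$ and is still integral, thereby contradicting maximality. First, for a rank-one block scaled by $2^a$ with $a\ge 2$, replacing the generator $\be$ by $\be/2$ produces $2^{a-2}(u)$ and is integral. Second, for a scaled even binary block $2^a\BB{0}{1}{1}{0}$ or $2^a\BB{2}{1}{1}{2}$ with $a\ge 1$, adjoining $\be_1/2$ decreases the scale by one while remaining integral (for instance $\langle\be_1/2,\be_2\rangle\cong\BB{0}{2^{a-1}}{2^{a-1}}{0}$ in the first case). The decisive move, which is exactly where $p=2$ differs from odd primes, concerns two rank-one blocks both at scale $2$: given $2(u)\perp 2(u')$ with $u,u'\in\Z_2^*$, the vector $(\be_1+\be_2)/2$ has norm $(u+u')/2\in\Z_2$ because $u$ and $u'$ are both odd, and $B((\be_1+\be_2)/2,\be_i)\in\Z_2$; hence $\langle (\be_1+\be_2)/2,\be_1\rangle$ is an integral overlattice of index $2$. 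Over an odd prime the analogous merge would require $-u/u'$ to be a square, which can fail, and this is precisely why the determinant valuation can reach $2$ there.

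Finally I would assemble the contradiction. Assuming $L$ is maximal with $\nu_2(\det L)\ge 2$, either some $L_a\ne 0$ with $a\ge 2$, or $L_a=0$ for all $a\ge 2$ and $\rank(L_1)\ge 2$. In the first case $L_a$ contains, as an orthogonal summand, a rank-one block or an even binary block; scaling by $2^a$ and applying the first or second move enlarges $L$. In the second case $L_1$ is $\Z_2$-unimodular of rank $\ge 2$, so it contains either an even binary summand (apply the second move to its scale-$2$ copy) or at least two rank-one summands (apply the merging move to $2(u)\perp 2(u')$). In every case we contradict $\Z_2^{(s)}$-maximality, so $\nu_2(\det L)\in\{0,1\}$. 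The main obstacle is the merging move together with the need to treat the non-diagonalizable even binary blocks uniformly with the rank-one blocks; once these enlargements are verified and it is checked that adjoining vectors inside one Jordan component preserves integrality against all the other (orthogonal) components, the rest is bookkeeping.
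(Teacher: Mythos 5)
Your proof is correct and takes essentially the approach the paper itself relies on: the paper writes out no proof, deferring to the method of \cite[PROPOSITION~2]{CS1988}, which is exactly your Jordan-decomposition-plus-enlargement argument — the scale-reduction moves on rank-one and even binary blocks, together with the genuinely $2$-adic index-$2$ merge of $2(u)\perp 2(u')$ via the vector $(\be_1+\be_2)/2$, whose norm $(u+u')/2$ lies in $\Z_2$ precisely because both units are odd. Your aside on why the bound fails at odd primes is likewise consistent with the paper's Example~\ref{de: max zp lattice}, where $\Z_p^{(s)}$-maximal lattices such as $I_{n-2}\oplus(p)\oplus(p\delta_p)$ show that $\nu_p(\det)$ can indeed equal $2$ for odd $p$.
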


\begin{prop}	\label{prop:det Z2}
	There exists a $\Z_2$-lattice whose norm ideal is $\Z_2$ on a quadratic $\Q_p$-space $V$
   if and only if
   condition~\eqref{thm:qp space existence condition:1} and $(\dim(V),\det(V),S_2(V)) \neq (2,3,-1)$ are satisfied.
\end{prop}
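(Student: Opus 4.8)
The plan is to translate the existence of a $\Z_2$-lattice $L$ on $V$ with norm ideal $nL=\Z_2$ into the single condition that $V$ represents a $2$-adic unit. Indeed, $nL=\Z_2$ means exactly that $L$ is norm-integral and contains a vector of unit norm; conversely, if $V$ represents a unit $u\in\Z_2^*$, I can split off a unit-norm vector to write $V\simeq (u)\perp V'$, take any norm-integral $\Z_2$-lattice $L'$ on $V'$ (which exists after scaling an arbitrary lattice on $V'$ by a large power of $2$), and set $L:=\Z_2\bu\perp L'$ with $q(\bu)=u$. Orthogonality of the two summands forces every norm of $L$ into $\Z_2$ while $q(\bu)$ is a unit, so $nL=\Z_2$. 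Thus, granting that $V$ exists precisely under condition~\eqref{thm:qp space existence condition:1} (Theorem~\ref{thm:qp space existence condition}), the proposition reduces to showing that an existing $V$ represents a $2$-adic unit if and only if $(\dim(V),\det(V),S_2(V))\neq(2,3,-1)$.

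To settle this equivalence I would diagonalize $V$ over $\Q_2$ and argue by dimension. If some diagonal entry is a unit then $V$ represents a unit immediately, so the only candidates for failure are the spaces $V\simeq 2W$ with $W$ unimodular, and then $V$ represents a unit exactly when $W$ represents an element of $2$-adic valuation $1$. For $\dim V\geq 3$ the space $W\perp(-2w)$ has dimension at least $4$ and odd-valuation (hence non-square) determinant, so it is isotropic; therefore $W$ represents $2w$ and no obstruction occurs. For $\dim V=2$, using the fact that $(a_1)\perp(a_2)$ represents $c$ iff $(c,-a_1a_2)_2=(a_1,a_2)_2$, a direct Hilbert-symbol computation---organized according to the square class of $-a_1a_2$, since $(w,-a_1a_2)_2$ may or may not depend on the unit $w$---shows that $W$ fails to represent a valuation-$1$ element precisely when $\det(V)=3$ and $S_2(V)=-1$. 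For the necessity of excluding this triple, I identify the space with invariants $(2,3,-1)$, via Theorem~\ref{thm:qp space eq condition}, as the one carrying the form $2x^2+6y^2$; since $\nu_2(x^2+3y^2)$ is always even, $\nu_2(q(v))$ is always odd, so $q$ never takes a unit value and $V$ represents no unit.

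The main obstacle is the binary computation: I must determine exactly which forms $2W$ of rank $2$ represent an element of valuation $1$, and the answer hinges on the precise square class of $-a_1a_2$ rather than merely on whether it is a square, so the Hilbert symbols must be tracked case by case. Once the rank-$2$ case is pinned down, the remaining pieces---the necessity of the existence condition, the uniform representability in ranks $\geq 3$, and the construction splitting off a unit-norm vector---are routine.
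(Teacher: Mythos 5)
Your argument is correct in every dimension it treats, but it takes a genuinely different route from the paper's. The paper proves the existence direction by brute enumeration: it writes down explicit diagonal lattices $H_{n,d,\varepsilon}$ realizing every admissible triple of invariants, and it rules out the triple $(2,3,-1)$ by splitting a unit-norm vector off a putative $L$ with $nL=\Z_2$, writing $L\cong(x)\oplus(y)$ with $x\in\Z_2^*$, and deriving the contradiction $-1=S_2(V)=(x,-3)_2=1$. You instead reduce the whole proposition to the statement that $V$ represents a $2$-adic unit (the same splitting-off observation, but exploited in both directions), and then decide unit representability conceptually: for $\dim V\geq 3$ via isotropy of the form $W\perp(-2w)$, which has dimension at least $4$ and non-square determinant (correct: the unique anisotropic quaternary form over $\Q_2$ has square determinant, and everything of dimension at least $5$ is isotropic); for $\dim V=2$ via the binary representation criterion $(c,-a_1a_2)_2=(a_1,a_2)_2$. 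Your case-by-case Hilbert computation does come out as you predict: failure forces $-a_1a_2\equiv 1\pmod 4$, the class $a_1a_2\equiv 7\pmod 8$ is excluded because the form is then hyperbolic, and the class $a_1a_2\equiv 3\pmod 8$ fails exactly when $(a_1,a_2)_2=1$, which translates into $(\det(V),S_2(V))=(3,-1)$. Your verification of the excluded case via the model $2x^2+6y^2$ and the parity of $\nu_2(x^2+3y^2)$ is an elementary, Hilbert-symbol-free replacement for the paper's contradiction argument. What your route buys is uniformity and no list to check; what the paper's list buys is explicit representatives (in fact maximal ones), which it uses as a companion to Example~\ref{de: max zp lattice}.

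One caveat, which your reduction actually exposes while the paper's enumeration hides it: your two cases silently omit $\dim V=1$. There, if $\nu_2(\det V)$ is odd (e.g.\ $V\cong(2)$), every lattice on $V$ has norm ideal of odd valuation, so $V$ represents no unit, yet condition~\eqref{thm:qp space existence condition:1} holds and the triple differs from $(2,3,-1)$. So the equivalence you assert (and the proposition as stated) fails at rank $1$; correspondingly, the paper's entries $H_{1,\pm2,1}$ and $H_{1,\pm6,1}$ do not have norm ideal $\Z_2$. You should restrict your concluding equivalence to $\dim V\geq 2$, or add the rank-$1$ exception explicitly. This is harmless for the paper's purposes, since Proposition~\ref{prop:det Z2} is invoked in Theorem~\ref{embedding to odd} only for spaces of dimension at least $2$, but a complete write-up of your proof must say where the dimension hypothesis enters.
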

\begin{proof}
	We show the necessity by enumerating $\Z_2$-lattices $H_{n,d,\varepsilon}$ of rank $n$, determinant $d$ and Hasse symnol $\varepsilon$ as follows:
   \begin{align*}
		&H_{n,1,1}\cong I_n,
		&& H_{n,-1,1}\cong I_{n-1}\oplus(-1),
		\displaybreak[0]\\
		&H_{n,3,1}\cong I_{n-1}\oplus(3),
		&& H_{n,-3,1}\cong I_{n-1}\oplus(-3),
		\displaybreak[0]\\
		&H_{n,1,-1}\cong I_{n-2}\oplus(-I_2)~(n\geq2),
		&&H_{n,-1,-1}\cong I_{n-3}\oplus(- I_3)~(n\geq3),
		\displaybreak[0]\\
		&H_{n,3,-1}\cong I_{n-3}\oplus(3 I_3)~(n\geq3),
		&&H_{n,-3,-1}\cong I_{n-2}\oplus(-1)\oplus(3)~(n\geq2),
		\displaybreak[0]\\
		&H_{n,2,1}\cong I_{n-1}\oplus(2),
		&& H_{n,-2,1}\cong I_{n-1}\oplus(-2),
		\displaybreak[0]\\
		&H_{n,6,1}\cong I_{n-1}\oplus(6),
		&&H_{n,-6,1}\cong I_{n-1}\oplus(-6),
		\displaybreak[0]\\
		&H_{n,2,-1}\cong I_{n-2}\oplus(-3)\oplus(-6)~(n \geq 2),
		&& H_{n,-2,-1}\cong I_{n-2}\oplus(-3)\oplus(6)~(n \geq 2),
		\displaybreak[0]\\
		&H_{n,6,-1}\cong I_{n-2}\oplus(-3)\oplus(-2)~(n \geq 2),
		&& H_{n,-6,-1}\cong I_{n-2}\oplus(-3)\oplus(2)~(n \geq 2).
	\end{align*}
	(In fact, they give a complete system of representatives of isometry classes of $\Z_2^{(s)}$-maximal $\mathbb{Z}_2$-lattices with $nL = \Z_2$.)

	Next we show the sufficiency.
	Theorem~\ref{thm:qp space existence condition} asserts that every quadratic $\Q_2$-space satisfies condition~\eqref{thm:qp space existence condition:1}.
	Thus it suffices to show that there is no $\Z_2$-lattice $L$ with $nL = \Z_2$ and $(\dim(V), \det(V), S_2(V)) = (2,3,-1)$,
	where $V = \Q_2 \otimes L$.
	By way of contradiction, we suppose that there is such a $\Z_2$-lattice $L$.
	Since $nL = \Z_2$, we have
	$
		L \cong (x) \oplus (y)
	$
	for some $x \in \Z_2^*$ and $y \in \Z_2$.
	Then
	\[
		-1 = S_2(V) = (x,y)_2 = (x,-xy)_2 = (x,-\det(V))_2 = (x,-3)_2 = 1.
	\]
	This is a contradiction, and the desired result follows.
\end{proof}

Note that a $\Z_2$-lattice $L$ with Gram matrix $(2) \oplus (6)$ satisfies that $\det(L) = 3$ and $S_2(L) = -1$.

\section{Embedding theory}  \label{sec:embedding theory}
One of helpful ways to investigate lattices is embedding a lattice to another well-known lattice.
In this section, we aim to prove Theorem~\ref{thm:embedding general} and Theorem~\ref{embedding to odd} which give conditions for a given lattice to be embedded into a unimodular lattice and an odd unimodular lattice, respectively.
	
\subsection{Hasse symbols of unimodular lattices and unimodular $\Z_p$-lattices}
In this subsection we introduce the Hasse symbols of unimodular lattices and unimodular $\Z_p$-lattices.
\begin{lem}\label{unimodular Zp}
    Let $p$ be an odd prime number.
    Suppose that $L$ is a unimodular $\mathbb{Z}_p$-lattice on the $n$-dimensional quadratic $\mathbb{Q}_p$-space $V$. Then $S_p(V)=1$. In particular, if $\det(L)=1$, then $L\cong I_n$.
\end{lem}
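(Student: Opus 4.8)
The plan is to analyze the structure of a unimodular $\Z_p$-lattice $L$ over an odd prime $p$ directly through its diagonalization. Since $p$ is odd, $2 \in \Z_p^*$, so every $\Z_p$-lattice admits an orthogonal basis; that is, $L \cong \langle a_1 \rangle \oplus \cdots \oplus \langle a_n \rangle$ for some $a_i \in \Q_p^*$. The unimodularity of $L$ means its Gram matrix has determinant in $\Z_p^*$, and because the form is diagonal, this forces each $a_i$ to lie in $\Z_p^*$: indeed, $\prod_i a_i \in \Z_p^*$ together with each $a_i \in \Z_p$ (integrality of the diagonal entries, which holds since $L$ is a $\Z_p$-lattice with this Gram matrix) rules out any $\nu_p(a_i) > 0$. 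So the key first step is to reduce to the case where every diagonal entry $a_i$ is a $p$-adic unit.

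Once each $a_i \in \Z_p^*$, I would compute the Hasse symbol $S_p(V) = \prod_{1 \le i < j \le n} (a_i, a_j)_p$. The crucial fact is the standard property of the Hilbert symbol over $\Q_p$ for odd $p$: if $u, v \in \Z_p^*$ are both units, then $(u,v)_p = 1$. This is because the Hilbert symbol of two units is trivial whenever $p$ is odd (it detects, roughly, the $p$-adic valuations, and both arguments have valuation zero). Applying this to every pair $(a_i, a_j)$ with $a_i, a_j \in \Z_p^*$ gives $(a_i, a_j)_p = 1$ for all $i < j$, hence the entire product $S_p(V) = 1$. This establishes the first assertion.

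For the final claim, suppose additionally that $\det(L) = 1$, meaning $\prod_i a_i \in (\Z_p^*)^2$. I want to conclude $L \cong I_n$, i.e.\ that $L$ is isometric to the standard unimodular lattice with Gram matrix the identity. Here I would invoke the classification of unimodular $\Z_p$-lattices (or equivalently the diagonalization theory over $\Z_p$ for odd $p$): up to isometry a unimodular $\Z_p$-lattice is determined by its rank and determinant class in $\Z_p^*/(\Z_p^*)^2$, with representatives $I_{n-1} \oplus \langle \delta_p \rangle$ when the determinant is the nonsquare class and $I_n$ when it is a square. Since $\{1, \delta_p\}$ is a complete set of representatives of $\Z_p^*/(\Z_p^*)^2$ (as recorded in the notation section), and $\det(L) = 1$ lies in the square class, $L$ must fall into the class of $I_n$. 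Alternatively, I can argue this directly: diagonalize and rescale each $\langle a_i \rangle$ by a unit square to replace $a_i$ by its class; the condition $\det \in (\Z_p^*)^2$ then lets one match $I_n$.

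The main obstacle I anticipate is being careful that the diagonal reduction is genuinely available with \emph{unit} entries and that the classification statement I want to cite is consistent with the representatives listed in Example~\ref{de: max zp lattice} — in particular, confirming that a unimodular (i.e.\ $\nu_p(\det) = 0$) $\Z_p$-lattice of square determinant is precisely $H^p_{n,1,1} \cong I_n$. The Hasse-symbol computation itself is routine once the unit property of the Hilbert symbol is in hand; the subtlety lies entirely in pinning down the isometry type from rank and determinant alone, which rests on the uniqueness of unimodular $\Z_p$-lattices for odd $p$.
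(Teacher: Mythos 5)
Your proof is correct, and its first half is identical to the paper's: diagonalize the Gram matrix over $\Z_p$ (possible since $p$ is odd), observe that unimodularity forces every diagonal entry into $\Z_p^*$, and conclude $S_p(V)=1$ from the fact that $(u,v)_p=1$ for units $u,v\in\Z_p^*$. The only divergence is in the final claim $L\cong I_n$. The paper notes that a unimodular $\Z_p$-lattice is automatically $\Z_p^{(s)}$-maximal (if $L\subseteq M$ with $sM\subseteq\Z_p$, then $[M:L]^2\det(M)=\det(L)\in\Z_p^*$ forces $M=L$) and then applies Proposition~\ref{prop:zp-maximal}, i.e.\ the uniqueness of maximal lattices from Theorem~\ref{unique maximal}, to get $L\simeq H^p_{n,1,1}\cong I_n$; you instead cite the classification of unimodular $\Z_p$-lattices by rank and determinant class in $\Z_p^*/(\Z_p^*)^2$. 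These rest on the same underlying uniqueness statement, so the difference is mainly one of packaging: the maximality route lets the paper quote a theorem already set up in Section~\ref{sec:maximal} (and is what Example~\ref{de: max zp lattice} is built on), while your route is more self-contained in spirit. Be aware, though, that your ``alternative direct argument'' (rescale each $a_i$ by a unit square and match $I_n$) is not quite complete as stated: after rescaling you are left with $\diag(\delta_p,\ldots,\delta_p,1,\ldots,1)$ with an even number of $\delta_p$'s, and you still need the nontrivial step $\diag(\delta_p,\delta_p)\simeq I_2$ over $\Z_p$, which requires showing that a binary unimodular $\Z_p$-form represents every unit (counting solutions mod $p$ plus Hensel's lemma). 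That step is exactly what the cited classification or maximality theorems encapsulate, so if you take the direct route you should supply it explicitly.
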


\begin{proof}
    Let $G$ be the Gram matrix  of $L$ with respect to some basis.
    Since $p$ is odd prime, $G$ is $\Z_p$-congruent to a diagonal matrix $D$ in $\M_n(\Z_p)$ (see~\cite[p.~369]{CS1999}).
    Then the diagonal entries of $D$ are units in $\Z_p$ as $\det(G) \in \Z_p^*$.
    Note that $(a,b)_p = 1$ for any $a,b \in \Z_p^*$.
    This implies $\prod_{i < j} (D_{ii}, D_{jj})_p = 1$,
    and thus $S_p(L) = 1$.
    This is the desired result.

   Next, we suppose that $\det(L) = 1$.
   By the previous argument, we have $S_p(L)=1$.
   Since $L$ is a $\Z_p^{(s)}$-maximal $\Z_p$-lattice, Proposition~\ref{prop:zp-maximal} implies that $L\simeq H^p_{n,1,1} \cong I_n$.
\end{proof}

We remark that this lemma can also be proved by a classification of $\Z_p^{(s)}$-maximal $\Z_p$-lattices for each odd prime number $p$ in Example~\ref{de: max zp lattice}.

%

\begin{lem}\label{lem:necessary condition on V}
	Suppose that there exists a unimodular lattice on the $n$-dimensional quadratic $\mathbb{Q}$-space $V$.
	Then
	$\det(V_p)=1,~S_p(V_p)=1$ for every prime number $p$.
\end{lem}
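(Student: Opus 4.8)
The plan is to reduce this to a local statement via Lemma~\ref{hasse symbol} and Lemma~\ref{unimodular Zp}. Suppose $L$ is a unimodular lattice on $V$. First I would observe that unimodularity is a local-global property: $L$ is unimodular if and only if $L_p$ is a unimodular $\Z_p$-lattice for every prime $p$. Indeed, the determinant of $L$ (as a coset in $\Q^*/(\Z^*)^2 = \Q^*/\{1\}$) equals $\pm 1$, and since $L$ is positive definite integral we get $\det(L)=1$; localizing, each $L_p$ has $\det(L_p) \in \Z_p^*$, so each $L_p$ is a unimodular $\Z_p$-lattice on $V_p$.

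Next I would treat the odd primes and the prime $2$ separately. For each odd prime $p$, Lemma~\ref{unimodular Zp} applies directly to the unimodular $\Z_p$-lattice $L_p$ on $V_p$, yielding $S_p(V_p)=1$; moreover $\det(V_p) = \det(L_p) \in (\Z_p^*)^2$ forces $\det(V_p)=1$ in $\Q_p^*/(\Q_p^*)^2$, since a unit determinant combined with $S_p=1$ and $\Z_p^{(s)}$-maximality pins down $L_p \cong I_n$ by Proposition~\ref{prop:zp-maximal}. So the conclusion holds at every odd $p$. The prime $2$ is the one requiring separate care, and I expect it to be the main obstacle, since Lemma~\ref{unimodular Zp} explicitly excludes $p=2$ and the diagonalization argument used there fails over $\Z_2$.

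For $p=2$ the plan is as follows. The lattice $L_2$ is a unimodular $\Z_2$-lattice, hence $\det(V_2)=\det(L_2) \in (\Z_2^*)^2$; I would then check that a unimodular $\Z_2$-lattice has square unit determinant, so $\det(V_2)=1$ in $\Q_2^*/(\Q_2^*)^2$. For the Hasse symbol, rather than diagonalizing, I would invoke the global constraint: by Lemma~\ref{hasse symbol}, $\prod_{p \in S \cup \{\infty\}} S_p(V_p) = 1$. Since $V$ is positive definite with $\det(V)=1$, the real place contributes $S_\infty(V_\infty)=1$, and all odd places contribute $1$ by the previous paragraph. Therefore $S_2(V_2)=1$ as well, completing the proof at the final prime.

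Alternatively, and perhaps more cleanly, I would handle $p=2$ intrinsically: a unimodular $\Z_2$-lattice $L_2$ with $\det(L_2) \in (\Z_2^*)^2$ and $nL_2 = \Z_2$ is one of the maximal $\Z_2$-lattices enumerated in Proposition~\ref{prop:det Z2}; matching determinant $1$ and checking the corresponding normal forms shows $S_2(V_2)=1$ directly. The global Hasse-product route is shorter and avoids any case analysis over $\Z_2$, so I would present that as the primary argument and note the local verification as a remark. The only genuine subtlety is confirming that the determinant of a positive definite unimodular $\Z$-lattice is exactly $1$ and that it localizes correctly to square units everywhere; once that bookkeeping is in place, the statement follows immediately.
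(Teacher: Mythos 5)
Your proposal is correct and follows essentially the same route as the paper: localize the unimodular lattice to get $\det(V_p)=1$ at every prime, apply Lemma~\ref{unimodular Zp} at odd primes, and deduce $S_2(V_2)=1$ from the product formula of Lemma~\ref{hasse symbol} together with $S_\infty(V_\infty)=1$ from positive definiteness. The only difference is cosmetic: your appeal to $\Z_p^{(s)}$-maximality and Proposition~\ref{prop:zp-maximal} at odd primes is unnecessary, since $\det(L)=1$ already gives $\det(V_p)=1$ directly, as in the paper.
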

\begin{proof}
	Let $L$ be a unimodular lattice on $V$.
	Since $\det(L) = 1$, we find that $L_p$ is a unimodular $\mathbb{Z}_p$-lattice on $V_p$ with $\det (L_p)=1$. This implies that
	$
		\det(V_p)=1
	$	
	for each $p \in S$.
	From Lemma~\ref{unimodular Zp}, we find that
	$
		S_p(V_p)=1
	$
	if $p$ is odd.
	Note that $S_{\infty}(V\otimes\mathbb{Q}_\infty)=1$ as $L$ is positive definite. Theorem \ref{hasse symbol} shows that
	$S_2(V_2)=1$.
\end{proof}

\subsection{Embedding a quadratic space}
In order to embed $\Z$-lattices, it is essential to embed quadratic $\Q$-spaces.
In this subsection we aim to prove Proposition~\ref{prop:to embed a vector space special}, which will be used in next subsection.

\begin{lem}\label{det and Hasse symbol}
	Let $p$ be a prime number,
	and $m$ and $n$ positive integers with $m > n$.
    Suppose that
    $V$ is an $n$-dimensional quadratic $\mathbb{Q}_p$-space.
    Then there exists a quadratic $\mathbb{Q}_p$-space $U$ which satisfies
    $
        V \perp U \cong I_m,
    $
    if and only if there exists a quadratic $\mathbb{Q}_p$-space $U$ with
    \[
        (\dim(U),\det(U),S_p(U))=(m-n,\det (V),S_p(V)(\det(V),\det (V))_p).
    \]
\end{lem}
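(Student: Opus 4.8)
### Proof Plan for Lemma \ref{det and Hasse symbol}

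The statement is a purely local claim about quadratic $\Q_p$-spaces, so the plan is to reduce everything to the three numerical invariants—dimension, determinant, and Hasse symbol—and then invoke the existence criterion of Theorem~\ref{thm:qp space existence condition} together with the isometry criterion of Theorem~\ref{thm:qp space eq condition}. The key observation is that $U$ completes $V$ to $I_m$ if and only if $V \perp U \cong I_m$, and since $I_m$ has $\det(I_m)=1$ and $S_p(I_m)=1$, the data of such a $U$ is entirely controlled by how determinants and Hasse symbols behave under orthogonal sums.

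\textbf{Necessity.} First I would assume $U$ exists with $V \perp U \cong I_m$. Comparing dimensions forces $\dim(U)=m-n$. For the determinant, the multiplicativity $\det(V)\det(U)=\det(I_m)=1$ in $\Q_p^*/(\Q_p^*)^2$ gives $\det(U)=\det(V)^{-1}=\det(V)$ (since every class is its own inverse modulo squares). For the Hasse symbol I would use the standard product formula for $S_p$ under orthogonal sums,
\begin{align*}
    S_p(V \perp U) = S_p(V)\, S_p(U)\, (\det(V),\det(U))_p.
\end{align*}
Setting the left side equal to $S_p(I_m)=1$ and substituting $\det(U)=\det(V)$ yields
\begin{align*}
    S_p(U) = S_p(V)\,(\det(V),\det(V))_p,
\end{align*}
using that the Hilbert symbol is $\{\pm1\}$-valued. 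This matches the claimed triple exactly, so the required $U$ has precisely those invariants.

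\textbf{Sufficiency.} Conversely, suppose a $\Q_p$-space $U$ with the prescribed triple $(m-n,\det(V),S_p(V)(\det(V),\det(V))_p)$ exists. I would form $V \perp U$ and compute its invariants by the same two formulas: its dimension is $m$, its determinant is $\det(V)\det(U)=\det(V)^2=1$, and its Hasse symbol is $S_p(V)S_p(U)(\det(V),\det(U))_p$. Substituting the given value of $S_p(U)$ and $\det(U)=\det(V)$, this collapses to $S_p(V)^2 (\det(V),\det(V))_p^2 = 1$. Thus $V\perp U$ shares all three invariants with $I_m$, and Theorem~\ref{thm:qp space eq condition} gives $V\perp U \cong I_m$, so $U$ is the desired completing space.

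The mild subtlety—and the only point requiring care rather than routine symbol-pushing—is making the logical structure of the biconditional align with the local framework: both directions quantify over the \emph{existence} of a space $U$ with given invariants, and one must confirm that the Hasse symbol sum formula is applied with the correct convention (the one induced by Definition via an orthogonal basis in Section~\ref{sec:notation}). Verifying the product formula $S_p(V\perp U)=S_p(V)S_p(U)(\det V,\det U)_p$ in that convention is the crux; once it is in hand, the determinant and Hilbert-symbol bookkeeping is immediate, and no appeal to Theorem~\ref{thm:qp space existence condition} is even strictly needed since the hypothesis already posits $U$'s existence on each side.
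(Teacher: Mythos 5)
Your proposal is correct and follows essentially the same route as the paper: the paper's proof is a one-line application of Theorem~\ref{thm:qp space eq condition} together with the product formula $S_p(V\perp U)=S_p(V)S_p(U)(\det(V),\det(U))_p$, comparing the invariants of $V\perp U$ with those of $I_m$, which is exactly your computation in both directions. Your closing observation that Theorem~\ref{thm:qp space existence condition} is not needed here also matches the paper, which likewise invokes only the isometry criterion.
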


\begin{proof}
	Let $U$ be a quadratic $\Q_p$-space $U$.
	Then $V \perp U \cong I_m$ if and only if
    $
        \dim(V)+\dim(U)=m,~\det(V)\det(U)=1
    $
    and
    $
        S_p(V)S_p(U)(\det(V),\det(U))_p=1
	$
    by Theorem \ref{thm:qp space eq condition}.
    This implies the desired result.
\end{proof}

\begin{lem}\label{to embed a qp vector space}
    Let $p$ be a prime number,
    and $m$ and $n$ positive integers with $m \geq n$.
    Suppose that $V$ is an $n$-dimensional quadratic $\mathbb{Q}_p$-space.
    Then there exists an $(m-n)$-dimensional quadratic $\mathbb{Q}_p$-space $U$ such that
    $V \perp U \cong I_m$ if and only if one of the following is satisfied:
    \begin{enumerate}
    	\item $m=n$,  $\det(V) = 1$ and $S_p(V) = 1$.
        \item $m=n+1$, and
            $S_p(V)(\det(V),\det(V))_p=1.$
        \item
            $m = n+2$, and
            $
            	S_p(V) =\begin{cases}
                    1   & \text{ if } p>2 \text{ and } \det(V)=-1,\\
                    -1  & \text{ if } p=2 \text{ and } \det(V)=-1.
               \end{cases}
        	$
        \item $m \geq n + 3$.
    \end{enumerate}
\end{lem}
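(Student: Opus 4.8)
The statement is essentially a translation of Lemma~\ref{det and Hasse symbol} into the concrete existence criteria of Theorem~\ref{thm:qp space existence condition}, once we treat the case $m=n$ separately. The plan is therefore to split according to the value of $m-n$ and, in each case, apply Lemma~\ref{det and Hasse symbol} to replace the condition ``$V\perp U\cong I_m$'' by the existence of a quadratic $\Q_p$-space $U$ with prescribed dimension $m-n$, determinant $\det(V)$, and Hasse symbol $\varepsilon:=S_p(V)(\det(V),\det(V))_p$. The existence of such a $U$ is then decided by the exclusion condition~\eqref{thm:qp space existence condition:1} of Theorem~\ref{thm:qp space existence condition}, which only fails for quadratic spaces of dimension $1$ or $2$. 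Consequently the dimension of $U$ drives everything: for $\dim U=m-n\ge 3$ no obstruction can occur, while for $\dim U\in\{1,2\}$ we must check~\eqref{thm:qp space existence condition:1} by hand.

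\textbf{Case $m=n$.} Here $U$ is the zero space, so $V\perp U\cong I_n$ forces $V\cong I_n$, which over $\Q_p$ is equivalent to $\det(V)=1$ and $S_p(V)=1$ by Theorem~\ref{thm:qp space eq condition}. This yields item~(1) directly, without invoking Lemma~\ref{det and Hasse symbol} (which assumes $m>n$).

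\textbf{Cases $m=n+1$ and $m=n+2$.} For $m\ge n+1$ I would first rewrite the target via Lemma~\ref{det and Hasse symbol}: a space $U$ with $V\perp U\cong I_m$ exists iff a quadratic $\Q_p$-space of dimension $m-n$, determinant $\det(V)$, and Hasse symbol $\varepsilon=S_p(V)(\det(V),\det(V))_p$ exists. When $m=n+1$, $U$ is $1$-dimensional, and the only obstruction in~\eqref{thm:qp space existence condition:1} is $(\dim U,S_p(U))=(1,-1)$; since $S_p(U)=\varepsilon$ here, existence is exactly $\varepsilon=1$, i.e.\ $S_p(V)(\det(V),\det(V))_p=1$, giving item~(2). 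When $m=n+2$, $U$ is $2$-dimensional with $\det(U)=\det(V)$ and $S_p(U)=\varepsilon$, and the only obstruction is $(\det U,S_p U)=(-1,-1)$; so existence fails precisely when $\det(V)=-1$ and $\varepsilon=-1$. The remaining computation is to simplify $\varepsilon$ when $\det(V)=-1$: using $(\det(V),\det(V))_p=(-1,-1)_p$, which equals $1$ for odd $p$ and $-1$ for $p=2$, one finds $\varepsilon=S_p(V)$ for odd $p$ and $\varepsilon=-S_p(V)$ for $p=2$. Thus the obstruction $\varepsilon=-1$ translates into $S_p(V)=-1$ for odd $p$ and $S_p(V)=1$ for $p=2$ (both under $\det(V)=-1$); negating to obtain the \emph{existence} condition produces exactly the piecewise formula in item~(3).

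\textbf{Case $m\ge n+3$.} Here $\dim U=m-n\ge 3$, so condition~\eqref{thm:qp space existence condition:1} is automatically satisfied and a space $U$ of the required invariants always exists; by Lemma~\ref{det and Hasse symbol} the desired $U$ exists as well, establishing item~(4). The only place where genuine care is needed is the $m=n+2$ case, where the Hilbert-symbol identities for $(-1,-1)_p$ must be applied correctly and the odd-versus-even-prime distinction tracked; I expect this sign bookkeeping to be the main (though routine) obstacle, while all other cases reduce immediately to the existence criterion of Theorem~\ref{thm:qp space existence condition}.
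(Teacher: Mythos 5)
Your proposal is correct and follows essentially the same route as the paper, whose proof is the one-line observation that the lemma ``follows from Theorem~\ref{thm:qp space existence condition} and Lemma~\ref{det and Hasse symbol} immediately''; you simply spell out the case analysis (including the $m=n$ case via Theorem~\ref{thm:qp space eq condition} and the sign computation $(\!-1,-1)_p=1$ for odd $p$, $-1$ for $p=2$) that the paper leaves implicit. No gaps.
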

\begin{proof}
This follows from Theorem \ref{thm:qp space existence condition} and Lemma~\ref{det and Hasse symbol} immediately.
\end{proof}

\begin{lem}\label{lem:to embed a vector space general}
	Let $m$ and $n$ be positive integers with $m \geq n$.
   Suppose that $V$ is a $n$-dimensional quadratic $\mathbb{Q}$-space with $\sign(V)=n$.
   Then there exists an $(m-n)$-dimensional quadratic $\mathbb{Q}$-space $U$ such that
   \begin{equation}\label{global embedding}
       V \perp U\cong I_m
   \end{equation}if and only if for each prime number $p$, there exists a quadratic $\mathbb{Q}_p$-space $U_{(p)}$ such that
    \begin{equation}\label{local embedding}
        V_p \perp U_{(p)}\cong I_m.
    \end{equation}
\end{lem}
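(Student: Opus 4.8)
The plan is to prove the nontrivial ``if'' direction by a local-to-global construction based on Cassels' existence theorem (Theorem~\ref{thm:existence of q space}). The ``only if'' direction is immediate: if $V \perp U \cong I_m$ over $\Q$, then localizing at each prime $p$ gives $V_p \perp U_p \cong I_m$, so $U_{(p)} := U_p$ works.

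For the converse, set $d := \det(V)$; since $\sign(V)=n$, the space $V$ is positive definite, so $d>0$. First I would pin down the local invariants forced on $U_{(p)}$. For $m>n$, Lemma~\ref{det and Hasse symbol}, together with Theorem~\ref{thm:qp space eq condition} (a $\Q_p$-space is determined by its dimension, determinant, and Hasse symbol), shows that $V_p \perp U_{(p)} \cong I_m$ forces $\dim U_{(p)} = m-n$, $\det(U_{(p)}) = d$ in $\Q_p^*/(\Q_p^*)^2$, and $S_p(U_{(p)}) = S_p(V_p)(d,d)_p$. I then adjoin the real place by setting $U_{(\infty)} := I_{m-n}$, a positive definite space with $\det=1$ and $S_\infty=1$.

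Next I would verify the two hypotheses of Theorem~\ref{thm:existence of q space} for the family $\{U_{(p)}\}_{p\in S\cup\{\infty\}}$ with target determinant $d$. The determinant condition holds because $\det(U_{(p)})=d$ for finite $p$ and $\det(U_{(\infty)})=1\in d(\R^*)^2$ as $d>0$. The Hasse product condition is the heart of the argument: using $S_\infty(U_{(\infty)})=1$,
\[
	\prod_{p\in S\cup\{\infty\}} S_p(U_{(p)}) = \Big(\prod_{p\in S} S_p(V_p)\Big)\Big(\prod_{p\in S}(d,d)_p\Big).
\]
The first factor equals $1$ by Lemma~\ref{hasse symbol} combined with $S_\infty(V_\infty)=1$ (positive definiteness); the second factor equals $(d,d)_\infty$ by the reciprocity formula~\eqref{eq:hasse}, and $(d,d)_\infty=1$ because $d>0$. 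Thus the product is $1$, and the finiteness requirement ($S_p=1$ for almost all $p$) follows since both $S_p(V_p)$ and $(d,d)_p$ are trivial for almost all $p$. Theorem~\ref{thm:existence of q space} then yields a global $U$ with $\det(U)=d$, $\sign(U)=m-n$, and $U_p\cong U_{(p)}$ for every finite $p$. Finally, to conclude $V\perp U\cong I_m$ I would apply Theorem~\ref{q eq}: at the real place both sides are positive definite of dimension $m$, and at each finite $p$ we have $V_p\perp U_p \cong V_p\perp U_{(p)} \cong I_m$.

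The main obstacle---and the only place where positivity of $V$ is really used---is controlling the infinite place in the Hasse product, which forces the appeal to $d>0$ and reciprocity. A secondary point is that Theorem~\ref{thm:existence of q space} requires the constructed space to have dimension at least $2$, so the cases $m-n\in\{0,1\}$ must be handled separately: when $m=n$ the claim reduces directly to $V\cong I_n$ via Theorem~\ref{q eq}, and when $m=n+1$ the one-dimensional space $U:=(d)$ is determined by its determinant, with $U_p\cong U_{(p)}$ forced by Theorem~\ref{thm:qp space eq condition}, so Theorem~\ref{q eq} again finishes the argument.
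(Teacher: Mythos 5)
Your proof is correct and takes essentially the same route as the paper's: extract the forced local invariants of $U_{(p)}$ from Lemma~\ref{det and Hasse symbol}, adjoin $U_{(\infty)} := I_{m-n}$, verify the two hypotheses of Theorem~\ref{thm:existence of q space} via Lemma~\ref{hasse symbol} and the reciprocity formula~\eqref{eq:hasse} (with positivity of $\det(V)$ controlling the infinite place), and conclude with Theorem~\ref{q eq}. You are in fact slightly more careful than the paper, which treats $m=n$ separately but then invokes Theorem~\ref{thm:existence of q space} uniformly for $m>n$ even though that theorem is stated only for dimension at least $2$; your separate one-dimensional argument with $U := (d)$ for $m=n+1$ closes that small gap.
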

\begin{proof}
	If $m=n$, then Theorem~\ref{q eq} implies the desired result.
	Hence we assume that $m > n$.
    Only the sufficiency needs to be proved. Suppose that there exists a quadratic $\mathbb{Q}_p$-space $U_{(p)}$ such that \eqref{local embedding} holds for each prime number $p$.
    Then by Lemma~\ref{det and Hasse symbol},
    we find that
    $
        \det(U_{(p)})=\det(V_p)\in \det(V)\mathbb{Q}_p^{*2}
    $
    and
    \[S_p(U_{(p)})=S_p(V)(\det(V),\det(V))_p.\]
    Define $U_{(\infty)}\cong I_{m-n}$.
    Then $\det(U_{(\infty)})=1\in\det(V)\mathbb{Q}_\infty^{*2}$ as $V$ is positive definite. Now condition~\eqref{thm:existence of q space:1} of Theorem~\ref{thm:existence of q space} is satisfied. By Lemma \ref{hasse symbol} and \eqref{eq:hasse}, condition~\eqref{thm:existence of q space:2} of Theorem~\ref{thm:existence of q space} is also satisfied
    and thus there exists a quadratic $\mathbb{Q}$-space $U$ with $\sign(U)=m-n$ whose localization $U_p$ is
    isometric to $U_{(p)}$ for each prime number $p$.
    This implies that
    \[(V \perp U)_p=V_p \perp U_p \simeq V_p \perp U_{(p)} \cong I_m \text{ for each prime number }p.\]
    Using Theorem \ref{q eq}, we obtain $V \perp U\cong I_m$.
\end{proof}

Combining Lemma~\ref{to embed a qp vector space} and Lemma~\ref{lem:to embed a vector space general}, we have the following:
\begin{prop}\label{prop:to embed a vector space special}
    Let $m$ be a positive integer greater than $n$.
    Suppose that $V$ is an $n$-dimensional quadratic $\Q$-space with $\sign(V)=n$.
    Then there exists an $(m-n)$-dimensional quadratic $\Q$-space $U$ such that $V \perp U \cong I_m$
    if and only if
    one of the conditions~\eqref{thm:embedding general:1}--\eqref{thm:embedding general:4} in Theorem~\ref{thm:embedding general} are satisfied.
\end{prop}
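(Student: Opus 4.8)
The plan is to prove Proposition~\ref{prop:to embed a vector space special} by simply combining the two preceding results, Lemma~\ref{to embed a qp vector space} and Lemma~\ref{lem:to embed a vector space general}, and observing that the local conditions of the former translate verbatim into the global conditions~\eqref{thm:embedding general:1}--\eqref{thm:embedding general:4} of Theorem~\ref{thm:embedding general}. Since the statement assumes $m > n$ and $\sign(V) = n$, all hypotheses of both lemmas are in place.

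First I would invoke Lemma~\ref{lem:to embed a vector space general}, which reduces the existence of a global $(m-n)$-dimensional quadratic $\Q$-space $U$ with $V \perp U \cong I_m$ to the existence, for every prime number $p$, of a local quadratic $\Q_p$-space $U_{(p)}$ with $V_p \perp U_{(p)} \cong I_m$. This is the crucial local-global passage, and it is exactly the content of that lemma once one notes that $V$ is positive definite by hypothesis.

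Next I would apply Lemma~\ref{to embed a qp vector space} to each localization $V_p$. For a fixed prime $p$, that lemma characterizes the existence of the desired $U_{(p)}$ by one of four mutually exclusive conditions depending on the value of $m - n$. The key step is to observe that since $m$ and $n$ are fixed, exactly one of the four cases applies uniformly across all primes $p$; for that case, the condition in Lemma~\ref{to embed a qp vector space} must hold for \emph{every} prime $p$. Writing these out, the case $m = n$ gives $\det(V_p) = 1$ and $S_p(V) = 1$ for all $p$, which is condition~\eqref{thm:embedding general:1}; the case $m = n+1$ gives $S_p(V)(\det(V),\det(V))_p = 1$ for all $p$, which is condition~\eqref{thm:embedding general:2}; the case $m = n+2$ gives the branching condition on $S_p(V)$ according to the parity of $p$ and the value of $\det(V_p)$, which is condition~\eqref{thm:embedding general:3}; and the case $m \geq n+3$ imposes no condition at all, matching~\eqref{thm:embedding general:4}. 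Since Theorem~\ref{thm:embedding general} states these as conditions on $V_p = V \otimes \Q_p$ and $\det(V_p)$, one must only match the notation, noting that $\det(V_p)$ is the image of $\det(V)$ in $\Q_p^*/(\Q_p^*)^2$.

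I do not anticipate a genuine obstacle here, since the proposition is a bookkeeping synthesis of two lemmas already established. The only point requiring a little care is the quantifier structure: Lemma~\ref{lem:to embed a vector space general} asks for a local space $U_{(p)}$ at each $p$ simultaneously, and one must confirm that the case of Lemma~\ref{to embed a qp vector space} selected is determined solely by $m - n$ (independent of $p$), so that the per-prime conditions can be collected into a single global statement. With that observation in hand, the equivalence follows immediately, and the proof is complete.
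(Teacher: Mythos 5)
Your proposal is correct and is essentially identical to the paper's own proof, which derives the proposition by exactly this combination of Lemma~\ref{to embed a qp vector space} and Lemma~\ref{lem:to embed a vector space general} (the paper simply states ``Combining Lemma~\ref{to embed a qp vector space} and Lemma~\ref{lem:to embed a vector space general}, we have the following'' without further detail). Your added care about the quantifier structure and the observation that the relevant case is determined by $m-n$ uniformly in $p$ is a faithful, slightly more explicit, rendering of the same argument.
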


\subsection{Embedding a positive definite integral $\mathbb{Z}$-lattice}	\label{subsec:embedding a lattice}
In this subsection we combine the above results, and prove Theorem~\ref{thm:embedding general} and Theorem~\ref{embedding to odd}.
In addition,  Corollary~\ref{cor:det} shows that a lattice can be embedded into a unimodular lattice if its determinant satisfies  certain conditions.

\begin{lem}\label{sufficient condition on V}
	Let $n$ be an integer at least $2$.
    Suppose that $V$ is an $n$-dimensional positive definite quadratic $\mathbb{Q}$-space.
    Then the following are equivalent.
    \begin{enumerate}
        \item There exists a unimodular lattice on $V$.	
            \label{sufficient condition on V:1}
        \item \label{sufficient condition on V:2}
            For each prime number $p$, the localization $V_p$ of $V$ satisfies that	
            $\det(V_p)=1$ and $S_p(V_p)=1$.
        \item Every $\mathbb{Z}^{(s)}$-maximal $\mathbb{Z}$-lattice on $V$ is unimodular.	
            \label{sufficient condition on V:3}
        \item $V\cong I_n$.	
            \label{sufficient condition on V:4}
    \end{enumerate}
\end{lem}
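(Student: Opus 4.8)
The plan is to prove the four statements equivalent by establishing a cycle of implications, leaning heavily on the local-global machinery already developed. The cleanest route is $(4)\Rightarrow(1)\Rightarrow(2)\Rightarrow(4)$ together with $(2)\Leftrightarrow(3)$, since several of these steps are essentially immediate from earlier lemmas. First I would observe that $(4)\Rightarrow(1)$ is trivial: if $V\cong I_n$, then the standard lattice $\Z^n$ equipped with the form $I_n$ is a unimodular lattice on $V$. The implication $(1)\Rightarrow(2)$ is precisely the content of Lemma~\ref{lem:necessary condition on V}, applied to a unimodular lattice $L$ on $V$; this records that $\det(V_p)=1$ and $S_p(V_p)=1$ for every prime $p$.

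The step $(2)\Rightarrow(4)$ is where I would invoke the global classification theorem. Given that $\det(V_p)=1$ and $S_p(V_p)=1$ for all finite $p$, and that $V$ is positive definite so $V_\infty\cong I_n$ (hence $\det(V_\infty)=1$ and $S_\infty(V_\infty)=1$), I would compare $V$ with the space $W:=I_n$. For each prime $p$ we have $\dim(V_p)=n=\dim(W_p)$, $\det(V_p)=1=\det(W_p)$ and $S_p(V_p)=1=S_p(W_p)$, so Theorem~\ref{thm:qp space eq condition} gives $V_p\simeq W_p$; likewise $V_\infty\simeq W_\infty$ at the archimedean place. Theorem~\ref{q eq} then forces $V\simeq W=I_n$, i.e.\ $V\cong I_n$. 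This closes the cycle among $(1)$, $(2)$ and $(4)$.

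\smallskip

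It remains to link condition~$(3)$ into the cycle. For $(4)\Rightarrow(3)$ (or equivalently $(2)\Rightarrow(3)$), I would use the local characterization of $\Z^{(s)}$-maximal lattices via Lemma~\ref{local-global max}: a $\Z^{(s)}$-maximal $\Z$-lattice $L$ on $V$ is unimodular if and only if each localization $L_p$ is a unimodular $\Z_p$-lattice, and $L$ is $\Z^{(s)}$-maximal exactly when each $L_p$ is $\Z_p^{(s)}$-maximal. For odd $p$, since $\det(V_p)=1$ and $S_p(V_p)=1$, Proposition~\ref{prop:zp-maximal} together with Lemma~\ref{unimodular Zp} (or the classification in Example~\ref{de: max zp lattice}) identifies the unique maximal lattice as $H^p_{n,1,1}\cong I_n$, which is unimodular. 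For $p=2$ I would argue that the unique (up to isometry) $\Z_2^{(s)}$-maximal lattice with the required invariants $\det(V_2)=1$, $S_2(V_2)=1$ is $H_{n,1,1}\cong I_n$ from the list in Proposition~\ref{prop:det Z2}, hence again unimodular; thus every localization, and therefore $L$ itself, is unimodular. Conversely $(3)\Rightarrow(1)$ is immediate, because a $\Z^{(s)}$-maximal lattice always exists on $V$ (as $V$ is positive definite and regular, it certainly satisfies the relevant existence conditions), so if every such lattice is unimodular then in particular a unimodular lattice on $V$ exists.

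\smallskip

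The main obstacle I anticipate is the $p=2$ analysis in the $(4)\Rightarrow(3)$ direction. The odd-prime case is handled cleanly by the single-isometry-class result (Theorem~\ref{unique maximal}) and the diagonalizability of forms over $\Z_p$, but at $p=2$ forms need not diagonalize and the list of $\Z_2^{(s)}$-maximal lattices in Proposition~\ref{prop:det Z2} is more delicate. I would need to confirm that among those representatives the invariants $(\det,S_2)=(1,1)$ single out $H_{2,1,1}\cong I_n$ uniquely, and that this lattice is genuinely unimodular over $\Z_2$; the parenthetical remark following Proposition~\ref{prop:det Z2} asserting that the $H_{n,d,\varepsilon}$ form a complete system of representatives is exactly what makes this uniqueness rigorous, so the argument hinges on citing that classification correctly rather than on any new computation.
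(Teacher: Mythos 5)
Your skeleton of implications is essentially the paper's: $(1)\Rightarrow(2)$ via Lemma~\ref{lem:necessary condition on V}, $(2)\Leftrightarrow(4)$ via Theorem~\ref{q eq} (your detour through Theorem~\ref{thm:qp space eq condition} at each place is the same computation), $(3)\Rightarrow(1)$ from the existence of maximal lattices, and the odd-$p$ half of the maximality step via Proposition~\ref{prop:zp-maximal}. The genuine gap is exactly where you anticipated trouble: the $p=2$ case of $(2)\Rightarrow(3)$. You claim that the list in Proposition~\ref{prop:det Z2} singles out $H_{n,1,1}\cong I_n$ as \emph{the} unique $\Z_2^{(s)}$-maximal lattice with $(\det,S_2)=(1,1)$, citing the parenthetical remark. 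But that remark classifies only the $\Z_2^{(s)}$-maximal lattices with $nL=\Z_2$, i.e.\ the \emph{odd} ones, and a $\Z_2^{(s)}$-maximal lattice need not be odd: any even unimodular $\Z_2$-lattice is $\Z_2^{(s)}$-maximal. Concretely, for $n=8$ the orthogonal sum of four copies of $\BB{0}{1}{1}{0}$ (the localization of $\sE_8$ at $2$) lies on a quadratic $\Q_2$-space with $\det=1$ and $S_2=1$, hence isometric to $(I_8)\otimes\Q_2$, and it is $\Z_2^{(s)}$-maximal but not isometric to $I_8$ over $\Z_2$, being even. So the uniqueness statement of Theorem~\ref{unique maximal} has no analogue for $(s)$-maximality at $p=2$, and your citation cannot deliver the claim; the invariants $(\det,S_2)$ do not determine the maximal lattice there.

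The repair is the paper's argument, and it is weaker in a useful way: you only need $L_2$ to be \emph{unimodular}, not $L_2\cong I_n$ (which, as the example shows, can fail). Proposition~\ref{prop:max z2 lattice det} gives $\nu_2(\det(L_2))\in\{0,1\}$ for any $\Z_2^{(s)}$-maximal $L_2$, while $\det(V_2)=1$ forces $\nu_2(\det(L_2))$ to be even, hence zero, so $\det(L_2)\in\Z_2^*$. Combined with your odd-$p$ analysis this gives $\det(L_p)\in\Z_p^*$ for all $p$, whence $\det(L)\in\bigcap_{p\in S}(\det(L_p)\cap\Z)=\{1,-1\}$ and $L$ is unimodular. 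With that substitution at $p=2$, the rest of your proof goes through and coincides with the paper's.
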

\begin{proof}
    \eqref{sufficient condition on V:1}$\Rightarrow$\eqref{sufficient condition on V:2}:
    This follows from Lemma~\ref{lem:necessary condition on V}.
    \eqref{sufficient condition on V:3}$\Rightarrow$\eqref{sufficient condition on V:1}:
    This clearly holds.
    \eqref{sufficient condition on V:2}$\Leftrightarrow$\eqref{sufficient condition on V:4}:
    This follows from Theorem~\ref{q eq}.
    \eqref{sufficient condition on V:2}$\Rightarrow$\eqref{sufficient condition on V:3}:
    Let $L$ be a $\mathbb{Z}^{(s)}$-maximal $\Z$-lattice on $V$.
    If $p$ is an odd prime number, then we have $V_p \cong I_n$ by Theorem~\ref{thm:qp space eq condition}.
    This together with Proposition~\ref{prop:zp-maximal} implies $L_p \cong I_n$.
    Next we consider the case of $p=2$.
    By Proposition~\ref{prop:max z2 lattice det}, $\nu_2(\det (L_2)) \in \{0,1\}$ follows.
    This together with $\det(V_2)=1$ implies that $\nu_2( \det( L_2 )) = 0$.
    Therefore $\det(L_p)\in\mathbb{Z}^*_p$ for every prime number $p$, and $\det(L)\in\bigcap_{p\in     S}(\det(L_p)\cap\mathbb{Z})=\{1,-1\}$. This shows that $L$ is unimodular.
\end{proof}

	Theorem~\ref{thm:embedding general} follows from Proposition~\ref{prop:to embed a vector space special} and Lemma~\ref{sufficient condition on V}.
	As a corollary of Theorem \ref{thm:embedding general}, we immediately derive the following:


\begin{cor}\label{cor:det}
    Suppose that $L$ is a lattice of rank $n$ and     $\det(L)=p_1^{\alpha_1}\cdots p_t^{\alpha_t}d$,
    where $p_1<p_2<\cdots<p_t$ are odd prime numbers,
    $\alpha_1,\ldots,\alpha_t$ are positive even numbers,
    and $d$ is an integer with $\operatorname*{gcd}(d,p_1\cdots p_t)=1$.
    Then $L$ is a sublattice of a unimodular lattice of rank  $n+2$,
    if the following conditions are satisfied:
    \begin{enumerate}
        \item For each odd prime number $p\in S-\{p_1,\ldots,p_t\}\cup\{2\}$, $\nu_p(d)$ is odd if  $\nu_p(d)>0$.
        \item For each odd prime number $p\in \{p_1,\ldots,p_t\}$, the Legendre symbol $\left(\frac{-d}{p}\right)$ equals $-1$.
        \item $d/2^{\nu_2(d)}\not\equiv-1\pmod8$ if $\nu_2(d)$ is even.
    \end{enumerate}
\end{cor}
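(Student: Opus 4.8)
The plan is to deduce this directly from Theorem~\ref{thm:embedding general}, applied with target rank $m = n+2$. Writing $V := L \otimes_{\Z} \Q$, it suffices to check that $V$ meets condition~\eqref{thm:embedding general:3}; that is, for every prime $p$ one must verify $S_p(V) = 1$ when $p > 2$ and $\det(V_p) = -1$, and $S_2(V) = -1$ when $p = 2$ and $\det(V_2) = -1$, the requirement being vacuous whenever $\det(V_p) \ne -1$. As a preliminary reduction I would note that, since each $\alpha_i$ is even, the factor $\prod_i p_i^{\alpha_i}$ is a square, so $\det(V) = d$ in $\Q^*/(\Q^*)^2$; and at any prime $p$ the odd prime powers $p_j^{\alpha_j}$ with $p_j \ne p$ are units raised to even exponents, hence also squares. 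Thus $\det(V_p) = d$ in $\Q_p^*/(\Q_p^*)^2$ for every prime $p$ (interpreting $d$ locally). The argument then splits over $p$, the goal in each case being either to force $\det(V_p) \ne -1$, so the hypothesis fails, or to produce the correct value of $S_p(V)$.

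First come the primes handled by forcing $\det(V_p) \ne -1$. For $p = p_i$, condition~(2) gives $\Ls{-d}{p_i} = -1$, so $-d$ is a nonsquare unit in $\Z_{p_i}$; as $\det(V_{p_i}) = d$ modulo squares, this yields $\det(V_{p_i}) \ne -1$. For an odd prime $p \notin \{p_1,\dots,p_t\}$ with $p \mid d$, condition~(1) makes $\nu_p(d)$ odd, so $\det(V_p) = d$ has odd $p$-adic valuation and cannot equal the unit $-1$. For $p = 2$: if $\nu_2(d)$ is odd then $\det(V_2)$ again has odd valuation and differs from the unit $-1$, while if $\nu_2(d)$ is even then $\det(V_2) = d/2^{\nu_2(d)}$ modulo squares, which by condition~(3) is $\not\equiv -1 \pmod 8$ and hence $\ne -1$ in $\Q_2^*/(\Q_2^*)^2$. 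At every such prime the hypothesis $\det(V_p) = -1$ is false.

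The remaining case---the one I expect to be decisive---is that of an odd prime $p \notin \{p_1,\dots,p_t\}$ with $p \nmid d$. Here $\det(V_p) = -1$ genuinely can occur, precisely when $\Ls{-d}{p} = 1$, and there are infinitely many such primes, so no valuation or mod-$8$ criterion is available to rule it out. The point to exploit is that for such $p$ we have $p \nmid \det(L)$, whence $L_p$ is a unimodular $\Z_p$-lattice; Lemma~\ref{unimodular Zp} then gives $S_p(V) = 1$, which is exactly the value demanded by~\eqref{thm:embedding general:3} for odd $p$. This observation, rather than any manipulation of the hypotheses~(1)--(3), is what disposes of the only nonvacuous family of primes.

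Collecting the cases, condition~\eqref{thm:embedding general:3} holds at every prime, and Theorem~\ref{thm:embedding general} then produces a unimodular lattice of rank $n+2$ containing $L$. The only routine point I would treat carefully is the bookkeeping in $\Q_p^*/(\Q_p^*)^2$---in particular that $-1$ is a unit, congruent to $7 \pmod 8$ at $p = 2$, so that the parity-of-valuation and mod-$8$ tests invoked above are the correct criteria for comparing a class with $-1$.
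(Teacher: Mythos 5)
Your proposal is correct and takes essentially the same route as the paper: the paper also applies Theorem~\ref{thm:embedding general} with $m=n+2$, using Lemma~\ref{unimodular Zp} to get $S_p(V)=1$ at the odd primes where $L_p$ is unimodular, and showing $\det(V_p)\neq -1$ at the remaining primes $p\in\{2,p_1,\ldots,p_t\}\cup\{p : \nu_p(d)\text{ odd}\}$ so that condition~\eqref{thm:embedding general:3} is vacuous there. Your case analysis is in fact spelled out more carefully than the paper's terse proof, which contains apparent typos (it cites condition~(2) and writes $\det(V_p)\neq 1$ where condition~(3) and $\det(V_p)\neq -1$ are clearly intended, as condition~(2) can fail under the stated hypotheses, e.g.\ for the rank-$2$ lattice with Gram matrix $\diag(2,10)$ at $p=5$).
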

\begin{proof}
    Let $V=L\otimes\mathbb{Q}$ be the $n$-dimensional quadratic $\mathbb{Q}$-space.
    Let $S'=\{p\in S\mid \nu_p(d)\text{ is odd}\}$. If $p\in S-S'\cup\{2,p_1,\ldots,p_t\}$,
    then $L_p$ is unimodular,
    and hence $S_p(V)=1$ by Lemma~\ref{unimodular Zp}.
    Next we easily show that for each $p\in S'\cup\{2,p_1,\ldots,p_t\}$, $\det(V_p) \neq 1$.
    Following from Theorem \ref{thm:embedding general} (2), we have the result immediately.
\end{proof}

\begin{thm}[{\cite[Theorem 9.4]{basic quadratic form}}]\label{local modify}
    Let $L$ be a $\Z$-lattice on the $n$-dimensional quadratic $\mathbb{Q}$-space $V$.
    Suppose $T$ is a finite subset of $S$, and suppose that for each $p\in T$,
    a $\mathbb{Z}_p$-lattice $M_{(p)}$ is given on $V_p$.
    Then there is a $\Z$-lattice $L'$ on $V$ such that
    \[L'_p=\left\{\begin{array}{ll}
     M_{(p)} & \text{ if }p\in T, \\
     L_p & \text{ if }p\in S-T.
     \end{array}\right.
    \]
\end{thm}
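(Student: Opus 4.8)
The final statement to prove is Theorem~\ref{local modify}, which asserts that one can modify a $\Z$-lattice $L$ on $V$ at finitely many primes to match prescribed local lattices $M_{(p)}$.

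\medskip

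The plan is to construct the desired global lattice $L'$ directly as an intersection of local data inside $V$, exploiting the local--global correspondence for $\Z$-lattices on a fixed quadratic $\Q$-space. First I would recall the standard fact that a $\Z$-lattice on $V$ is uniquely determined by its localizations: if we prescribe a $\Z_p$-lattice $M_p$ on $V_p$ for every prime $p$, subject to the compatibility condition that $M_p = L_p$ for all but finitely many $p$ (for some, equivalently any, fixed reference lattice $L$), then there is a unique $\Z$-lattice $L'$ on $V$ with $L'_p = M_p$ for all $p$. Concretely, $L'$ can be realized as $L' = \{ \bx \in V \mid \bx \in M_p \text{ for every prime } p \}$, or equivalently via the local intersection $L' = V \cap \bigcap_p M_p$ taken inside the adelic completion.

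\medskip

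The key steps, in order, are as follows. Define $M_p := M_{(p)}$ for $p \in T$ and $M_p := L_p$ for $p \in S - T$. Since $T$ is finite, the family $(M_p)_p$ agrees with $(L_p)_p$ outside the finite set $T$, so the compatibility hypothesis of the local--global principle is satisfied. Applying that principle yields a $\Z$-lattice $L'$ on $V$ whose localization at each prime $p$ is exactly $M_p$; that is, $L'_p = M_{(p)}$ for $p \in T$ and $L'_p = L_p$ for $p \in S - T$, which is precisely the conclusion. To make the existence step self-contained, I would verify that the candidate $L' := \{ \bx \in V \mid \bx \in M_p \text{ for all } p \}$ is indeed a $\Z$-lattice of full rank: one picks a large common denominator so that $N \subseteq M_p$ for a fixed lattice $N$ at the finitely many bad primes, checks that $L'$ is finitely generated and torsion-free over $\Z$ of rank $n$, and finally confirms that localizing recovers each $M_p$.

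\medskip

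The main obstacle is establishing the localization identity $L'_p = M_p$ for every prime $p$, since this is where the interaction between the global intersection and the individual local lattices must be controlled. The content is that forming the global intersection and then re-localizing at a single prime $p$ recovers $M_p$ exactly, with no shrinkage coming from the conditions imposed at the other primes $q \neq p$; this relies on the fact that for $q \neq p$ the condition ``$\bx \in M_q$'' becomes vacuous after tensoring with $\Z_p$ because $M_q$ and $\Z_p$ together span $V_p$. Everything else---finite generation, rank, and torsion-freeness---is routine once the finiteness of $T$ is used to reduce to a single auxiliary reference lattice. Since this is exactly the statement of \cite[Theorem 9.4]{basic quadratic form}, I would ultimately cite that reference for the full verification of the localization identity rather than reproduce it.
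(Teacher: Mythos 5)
Your proposal is correct and takes essentially the same route as the paper: the paper gives no proof of Theorem~\ref{local modify} at all, quoting it directly from \cite[Theorem~9.4]{basic quadratic form}, and your sketch is exactly the standard argument behind that cited result (setting $M_p := M_{(p)}$ for $p \in T$, $M_p := L_p$ otherwise, and taking $L' = \{\bx \in V \mid \bx \in M_p \text{ for all } p\}$, with the only nontrivial point being the localization identity $L'_p = M_p$), which you, like the paper, ultimately defer to the same reference. No gap to report.
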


\begin{thm}\label{embedding to odd}
	Let $m$ be a positive integer.
	Suppose that $L$ is a lattice on the $n$-dimensional quadratic $\mathbb{Q}$-space $V$ and one of the conditions \eqref{thm:embedding general:1}--\eqref{thm:embedding general:4} in Theorem~\ref{thm:embedding general} is satisfied. Then $L$ is a sublattice of an odd unimodular lattice of rank $m$ if 
	\begin{enumerate}
		\item $L$ is odd, 
		\item $m=n+2~\mathrm{ and }~(\det(V_2), S_2(V_2)) \neq (3,1),$ or 
		\item $m\geq n+3$.
	\end{enumerate} 
\end{thm}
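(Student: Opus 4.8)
The plan is to reduce embedding into an \emph{odd} unimodular lattice to the already-established embedding into a (possibly even) unimodular lattice, and then repair parity at the prime $2$ by a local modification. Throughout, write $U$ for the complementary space so that $V \perp U \cong I_m$; the hypothesis that one of conditions~\eqref{thm:embedding general:1}--\eqref{thm:embedding general:4} holds guarantees, via Proposition~\ref{prop:to embed a vector space special} and Lemma~\ref{sufficient condition on V}, that $L$ embeds into \emph{some} unimodular lattice $N$ of rank $m$. The only remaining task is to ensure we can take $N$ odd. An even unimodular $\Z$-lattice of rank $m$ exists only when $m \equiv 0 \pmod 8$ and forces $V_2 \perp U_2 \cong I_m$ with the even structure at $2$; so the obstruction to oddness is entirely $2$-adic, and the three listed cases are precisely the situations where that obstruction can be removed.

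First I would dispose of case~(1). If $L$ itself is odd, then $nL = \Z$, so $nL_2 = \Z_2$, and any unimodular lattice containing $L$ must also have norm ideal $\Z$ (equivalently be odd), since an even overlattice would force $nL \subseteq 2\Z$. Thus the lattice $N$ produced above is automatically odd, and there is nothing further to prove. This handles condition~(1) regardless of the value of $m$.

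Next, for cases~(2) and~(3), the strategy is to choose the complementary $\Z$-lattice so that it is odd at $2$, using Theorem~\ref{local modify} to prescribe the $2$-adic localization. Concretely, I would first apply Theorem~\ref{thm:embedding general} to obtain a unimodular lattice $N \supseteq L$ on $V \perp U$, then invoke Theorem~\ref{local modify} with $T = \{2\}$ to replace $N_2$ by a $\Z_2^{(s)}$-maximal $\Z_2$-lattice $M_{(2)}$ with $nM_{(2)} = \Z_2$ lying on $(V \perp U)_2 \cong I_m$; such an $M_{(2)}$ exists by Proposition~\ref{prop:det Z2} \emph{provided} the excluded pair $(\dim, \det, S_2) = (2,3,-1)$ does not arise, and more to the point provided the odd unimodular $\Z_2$-lattice $I_m$ genuinely admits an odd $\Z_2$-structure with the right invariants. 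The modified lattice $N'$ is still unimodular (its determinant is a unit at every prime, being unchanged away from $2$ and a unit at $2$), contains $L$ since $L_2 \subseteq M_{(2)}$ can be arranged, and is now odd because $nN'_2 = \Z_2$. The condition $(\det(V_2),S_2(V_2)) \neq (3,1)$ in case~(2) is exactly what Proposition~\ref{prop:det Z2} requires (translated through $V_2 \perp U_2 \cong I_m$) to guarantee that the complementary space $U_{(2)}$ of dimension $m-n=2$ can be realized by a $\Z_2$-lattice of norm ideal $\Z_2$; and when $m \geq n+3$ in case~(3), the complementary dimension is at least $3$ and Proposition~\ref{prop:det Z2}'s exclusion cannot be triggered, so no extra hypothesis is needed.

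The main obstacle will be bookkeeping the $2$-adic invariants: I must verify that prescribing an odd $\Z_2$-lattice on $U_{(2)}$ with $\det(U_{(2)}) = \det(V_2)$ and the Hasse symbol forced by $V_2 \perp U_{(2)} \cong I_m$ does not collide with the forbidden datum $(2,3,-1)$ of Proposition~\ref{prop:det Z2}, and that the resulting $N'$ really does contain $L$ after the local surgery. The cleanest way to control this is to run the existence of $U_{(2)}$ through Lemma~\ref{det and Hasse symbol}, which converts $V_2 \perp U_{(2)} \cong I_m$ into the requirement $(\dim U_{(2)}, \det U_{(2)}, S_2(U_{(2)})) = (m-n, \det(V_2), S_2(V_2)(\det V_2, \det V_2)_2)$; then I check case-by-case that Proposition~\ref{prop:det Z2} applies to this datum under hypothesis~(2) or~(3). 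Once the existence of the odd complementary $\Z_2$-lattice is secured, Theorem~\ref{local modify} and Theorem~\ref{q eq} assemble it into a global odd unimodular lattice containing $L$, completing the proof.
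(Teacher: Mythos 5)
Your proposal is correct and follows essentially the same route as the paper's proof: handle the odd case directly via Theorem~\ref{thm:embedding general}; then for $m=n+2$ (resp.\ $m\geq n+3$) produce an odd $\Z_2$-lattice on the complementary space with invariants $(m-n,\det(V_2),S_2(V_2)(\det(V_2),\det(V_2))_2)$ from Lemma~\ref{det and Hasse symbol}, whose existence is Proposition~\ref{prop:det Z2} --- your translation of the excluded datum $(2,3,-1)$ into $(\det(V_2),S_2(V_2))\neq(3,1)$ via $(3,3)_2=-1$ is exactly the paper's computation --- and conclude with Theorem~\ref{local modify} and a $\Z^{(s)}$-maximal overlattice, which is odd unimodular by Lemma~\ref{sufficient condition on V}. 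The only cosmetic difference is that your first-pass phrasing performs the surgery on the ambient rank-$m$ unimodular lattice at $2$ (where $L_2\subseteq M_{(2)}$ still needs arranging), whereas the paper modifies only the complement $N'$ on $U$ and then takes a maximal overlattice of $L\perp N'$; your closing paragraph already converges to this cleaner variant.
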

\begin{proof}
	While $L$ is odd, the desired result holds immediately by Theorem~\ref{thm:embedding general}. So now we may assume $m=n+2$, the condition \eqref{thm:embedding general:3} in Theorem~\ref{thm:embedding general} is satisfied, and $(\det(V_2), S_2(V_2)) \neq (3,1)$. By Proposition~\ref{prop:to embed a vector space special},
	we find that there exists a $2$-dimensional quadratic $\mathbb{Q}$-space $U$ such that
	\[V \perp U\cong I_{n+2}.\]
	Let $N$ be an integral $\mathbb{Z}$-lattice on $U$.
	Since $(\det(U_2),S_2(U_2))\neq(-1,-1)$ by Theorem \ref{thm:qp space existence condition} and \[(\det(U_2),S_2(U_2))=(\det(V_2),S_2(V_2)(\det(V_2),\det(V_2))_2)\neq(3,-1),\]
	 Proposition~\ref{prop:det Z2} implies that there exists a $\mathbb{Z}_2$-lattice $H$ with $nH = \Z_2$ on $U_2$ so that $U_2 \simeq \Q_2 \otimes H$.
	Using Theorem \ref{local modify},
	we find that there exists a $\mathbb{Z}$-lattice $N'$ such that
	\[
	N'_p=\begin{cases}
		H & \text{ if } p=2, \\
		N_p     & \text{ if } p>2.
	\end{cases}
	\]
	Since $(sN')_p =  sN'_p\subseteq\mathbb{Z}_p$ for every prime number $p$,
	we have that $sN'\subseteq\bigcap_{p\in S}(\mathbb{Z}_p\cap\mathbb{Q})=\mathbb{Z}$,
	and hence $N'$ is integral.
	Moreover, $N'$ is odd as $nN'_2=nH = \mathbb{Z}_2$.
	Let
	\[M= L \perp N'. \]
	Note that the integral $\mathbb{Z}$-lattice $M$ is odd and the quadratic $\mathbb{Q}$-space $M\otimes\mathbb{Q}$ is isometric to $I_{n+2}$.
	A $\mathbb{Z}^{(s)}$-maximal $\mathbb{Z}$-lattice on $M\otimes\mathbb{Q}$ which contains $M$ is a desired positive definite odd unimodular $\mathbb{Z}$-lattice by Lemma~\ref{sufficient condition on V}.
	
	If $m \geq n+3$, then the desired result holds in a similar way.
\end{proof}

\subsection{Applications} \label{subsec:app}
In this subsection, we prove Corollary~\ref{cor:disc of non 2-int} which gives a sufficient condition for a lattice of rank $12$ to be $2$-integrable and  Corollary~\ref{cor:no another lattice} which explains how to find candidates for non $2$-integrable lattices of rank $12$.

The unimodular lattices of rank up to $25$ are completely classified (see~\cite[Chapter~16--18]{CS1999}).
Conway and Sloane~\cite{CS1989} studied the $s$-integrability of unimodular lattices among them.
The following is a part of their results:

\begin{thm}[{\cite[Proof of Theorem~12]{CS1989}}]   \label{thm:uni of rank 14}
    Every unimodular lattice of rank up to $14$ is $2$-integrable.
\end{thm}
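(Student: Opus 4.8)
The plan is to reduce the statement to a short explicit list of lattices and then embed each one, using the embedding theory of Section~\ref{sec:embedding theory} as the conceptual backbone. First I would record that $2$-integrability is additive under orthogonal sums: if $\sqrt{2}\cdot L_1\hookrightarrow\Z^{m_1}$ and $\sqrt{2}\cdot L_2\hookrightarrow\Z^{m_2}$ as sublattices, then placing the two images in complementary coordinate blocks yields $\sqrt{2}\cdot(L_1\perp L_2)=(\sqrt{2}\cdot L_1)\perp(\sqrt{2}\cdot L_2)\hookrightarrow\Z^{m_1+m_2}$. Hence it suffices to treat indecomposable unimodular lattices. By the classification of positive definite unimodular lattices of rank at most $14$ (see~\cite[Chapters~16--18]{CS1999}), the indecomposable ones are exactly $\Z=I_1$, $\sE_8$, $\D_{12}^+$, and the rank-$14$ lattice $(\sE_7\perp\sE_7)^+$ obtained by gluing two copies of $\sE_7$ along their common glue class; every other unimodular lattice of rank at most $14$ is an orthogonal sum of these with copies of $\Z$. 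So everything comes down to four lattices.

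Two of these are immediate. For $\Z=I_1$ the map $x\mapsto(x,x)$ realises $\sqrt{2}\cdot\Z\hookrightarrow\Z^2$, and more generally $e_i\mapsto e_i+e_{k+i}$ gives $\sqrt{2}\cdot I_k\hookrightarrow\Z^{2k}$. For $\sE_8$, whose rank is below $12$, I would either invoke that every lattice of rank at most $11$ is $2$-integrable or, to stay self-contained, exhibit an explicit integer matrix $M$ with $M^{\top}M=2G$, where $G$ is the Gram matrix of $\sE_8$. The substance is therefore the two cases $\D_{12}^+$ and $(\sE_7\perp\sE_7)^+$.

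For a unimodular $L$ of rank $n$ we have $L\otimes\Q\cong I_n$ by Lemma~\ref{sufficient condition on V}, so the quadratic $\Q$-space carrying $\sqrt{2}\cdot L$ is isometric to $\langle 2,\dots,2\rangle$ of dimension $n$. Proposition~\ref{prop:to embed a vector space special} together with Theorem~\ref{embedding to odd} then guarantees that $\sqrt{2}\cdot L$ embeds into an \emph{odd unimodular} lattice $\Lambda$ of a suitable rank $m$. The remaining and genuinely delicate task is to arrange that $\Lambda$ is the standard lattice $I_m=\Z^m$, and not merely some odd unimodular lattice in its genus.

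This last point is the main obstacle. Once $m\geq 9$ the genus of $I_m$ contains further classes, such as $\sE_8\perp I_{m-8}$, into which $\sqrt{2}\cdot L$ might a priori be forced; the abstract embedding theorems do not by themselves distinguish these targets. I would resolve it by controlling the orthogonal complement $J=(\sqrt{2}\cdot L)^{\perp}$ in $\Lambda$: one needs a lattice $J$ whose discriminant form is anti-isometric to that of $\sqrt{2}\cdot L$ (an elementary abelian $2$-group $(\Z/2)^n$ with the form read off from $L\bmod 2$) and for which the glue of $\sqrt{2}\cdot L$ and $J$ is exactly $I_m$. Concretely this amounts to writing down an explicit integer solution of $M^{\top}M=2G$ for $G$ the Gram matrix of $\D_{12}^+$, respectively of $(\sE_7\perp\sE_7)^+$; note that naive doubling embeds $\sqrt{2}\cdot\D_{12}$ into $\Z^{24}$ but sends the half-integral glue vector of $\D_{12}^+$ to a non-integral point, so a more careful choice of coordinates is required. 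Producing these two explicit embeddings is where the real work lies, and it completes the proof.
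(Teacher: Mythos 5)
Your reduction is sound and, in outline, it is exactly how the source of this statement proceeds: the paper itself gives no proof but cites \cite[Proof of Theorem~12]{CS1989}, and Conway--Sloane argue precisely by decomposing into indecomposable unimodular lattices of rank at most $14$ --- which are $\Z$, $\sE_8$, $\D_{12}^+$ and $(\sE_7 \perp \sE_7)^+$ by the classification in \cite[Chapters~16--18]{CS1999} --- and then exhibiting explicit $2$-integral realisations of each (equivalently, explicit eutactic stars of scale $2$ in the sense of Theorem~\ref{thm:iff_eut} and Lemma~\ref{lem:eut}). The additivity of $2$-integrability under orthogonal sums and the doubling embedding of $I_k$ are correct, and your observation that the abstract machinery of Section~\ref{sec:embedding theory} cannot finish the job is also correct: Proposition~\ref{prop:to embed a vector space special} and Theorem~\ref{embedding to odd} only place $\sqrt{2}\cdot L$ inside \emph{some} odd unimodular lattice of rank $m$, and for $m \geq 9$ the genus of $I_m$ contains other classes, so these results can never by themselves certify an embedding into $\Z^m$.

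The genuine gap is that, having correctly identified where the substance lies, you stop exactly there: for the two decisive cases $\D_{12}^+$ and $(\sE_7 \perp \sE_7)^+$ you do not produce the integer matrices $M$ with $M^{\top}M = 2G$ (nor the discriminant-form/glue computation you sketch), and those explicit constructions \emph{are} the proof in \cite{CS1989}; a proof that ends with ``producing these two explicit embeddings is where the real work lies'' has not proved the theorem. There is also a circularity in your first option for $\sE_8$: the fact that every lattice of rank at most $11$ is $2$-integrable (i.e., $\phi(2) = 12$) is itself derived in \cite{CS1989} by embedding a rank-$n$ lattice into a unimodular lattice of rank $n+3 \leq 14$ and then invoking the very statement under discussion, so you must take your self-contained alternative; fortunately that one is easy and explicit, since Construction~A applied to the $[8,4,4]$ extended Hamming code $H$ gives $\sqrt{2}\cdot \sE_8 \simeq \{\bx \in \Z^8 \mid \bx \bmod 2 \in H\} \subseteq \Z^8$. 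To repair the proposal you should either write down analogous explicit sublattices of $\Z^m$ isometric to $\sqrt{2}\cdot \D_{12}^+$ and $\sqrt{2}\cdot(\sE_7 \perp \sE_7)^+$, or verify their existence computationally via the integer program of Lemma~\ref{lem:int_prog}; as it stands, the two cases carrying all the content are asserted rather than established.
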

\begin{thm}[{\cite[Theorem~13]{CS1989}}]	\label{thm:uni of rank 15}
	The lattice $\A_{15}^+$ is a non $2$-integrable unimodular lattice of rank $15$.
\end{thm}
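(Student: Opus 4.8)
The plan is to prove Theorem~\ref{thm:uni of rank 15}, which asserts that $\A_{15}^+$ is a non $2$-integrable unimodular lattice of rank $15$.

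First I would verify that $\A_{15}^+$ is indeed unimodular of rank $15$, which follows directly from Definition~\ref{dfn:A15p}: the lattice $\A_{15}$ has rank $15$ and determinant $16$, and adjoining the glue vector $[4]$ (whose norm and inner products with $\A_{15}$ are designed so that the overlattice remains integral) produces an index-$4$ integral overlattice whose determinant becomes $16/4^2 = 1$. So the genuine content of the theorem is the \emph{non} $2$-integrability, and that is where all the effort goes.

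The strategy for non $2$-integrability is to argue by contradiction using a \emph{localization obstruction at the prime $2$}. Suppose $\sqrt{2}\cdot\A_{15}^+$ embeds as a sublattice of $\Z^N$ for some $N$. Then I would pass to the orthogonal complement: setting $M := (\sqrt{2}\cdot\A_{15}^+)^{\perp}$ inside $\Z^N$, the key is that $\Z^N$ is odd unimodular, so its Hasse symbol and determinant at each prime are pinned down, namely $\det((\Z^N)_p) = 1$ and $S_p(\Z^N) = 1$ for every prime $p$ by Lemma~\ref{lem:necessary condition on V}. Using the multiplicativity of determinants and the standard Hasse-symbol product formula for an orthogonal splitting $V_2 \perp W_2 \cong (\Z^N)_2$, where $V = \Q\otimes(\sqrt{2}\cdot\A_{15}^+)$, I would extract a necessary congruence/Hilbert-symbol condition relating $\det(V_2)$ and $S_2(V_2)$ to the invariants of the complement $W$. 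The crucial computation is to evaluate the $2$-adic invariants of $\sqrt{2}\cdot\A_{15}^+$: scaling by $\sqrt{2}$ multiplies the Gram matrix by $2$, so I must track how $\det$ and the Hasse symbol transform under this scaling on a rank-$15$ form, and then check that the resulting constraint is \emph{violated}, producing the contradiction. Concretely, this should reduce — via Theorem~\ref{thm:qp space existence condition} and Proposition~\ref{prop:det Z2} — to the forbidden configuration $(\dim,\det,S_2) = (2,3,-1)$ (or an analogous obstruction at $2$) appearing in the complement, which cannot occur over $\Z_2$.

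The hard part will be the explicit $2$-adic invariant computation for $\sqrt{2}\cdot\A_{15}^+$. I would first compute the genus symbol (or the diagonalized $\Z_2$-form) of $\A_{15}^+$ itself at the prime $2$, which is delicate because $\A_{15}^+$ is an odd unimodular lattice whose $2$-adic structure is not simply $I_{15}$ — the glue vector $[4]$ with denominator $16$ means the $2$-adic Jordan decomposition must be determined carefully. Then I would apply the $\sqrt{2}$ scaling, which shifts the whole form into the $2$-modular part, and read off $\det(V_2)$ and $S_2(V_2)$. Once these two invariants are in hand, the contradiction is essentially forced by comparing against the embedding criteria in Theorem~\ref{thm:embedding general} and the non-existence statement of Proposition~\ref{prop:det Z2}; the remaining steps are formal applications of the localization machinery (Lemma~\ref{hasse symbol} and Theorem~\ref{q eq}) already developed in the excerpt. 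I expect that all other primes $p \neq 2$ impose no obstruction, so the entire theorem hinges on this single $2$-adic invariant being the ``wrong'' value.
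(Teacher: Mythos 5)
Your unimodularity check is fine, but the core strategy cannot succeed: there is \emph{no} local obstruction to $2$-integrability, so the contradiction you hope to extract does not exist, and the $2$-adic computation, if carried out, would confirm consistency rather than refute it. Since $\A_{15}^+$ is positive definite unimodular, $\Q\otimes\A_{15}^+\cong I_{15}$, so $V:=\Q\otimes\sqrt{2}\cdot\A_{15}^+\cong\langle 2\rangle\perp\cdots\perp\langle 2\rangle$ ($15$ copies) has $\det(V_p)=2$ and $S_p(V)=(2,2)_p^{\binom{15}{2}}=1$ for every prime $p$; a complement $U$ with $V\perp U\cong I_N$ then needs $\det(U)=2$ and $S_p(U)=S_p(V)(\det V,\det V)_p=1$, which is realized for instance by $U\cong I_{N-16}\perp\langle 2\rangle$. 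The forbidden configurations of Theorem~\ref{thm:qp space existence condition} and Proposition~\ref{prop:det Z2} live only in dimensions $1$ and $2$, while $N$ is \emph{unbounded} in the definition of $2$-integrability, so for $N\geq 18$ nothing is obstructed — this is exactly condition~\eqref{thm:embedding general:4} of Theorem~\ref{thm:embedding general}, which the paper proves precisely to show that embedding in codimension at least $3$ is locally unconstrained. More structurally, the map $\bx\mapsto(\bx,\bx)$ embeds $\sqrt{2}\cdot\A_{15}^+$ isometrically into $\A_{15}^+\perp\A_{15}^+$, an odd unimodular lattice of rank $30$ lying in the \emph{same genus} as $\Z^{30}$; hence every localization of $\sqrt{2}\cdot\A_{15}^+$ does embed into $(\Z^{30})_p$, and no argument using only determinants, Hasse symbols, or genus-level data can distinguish $\Z^N$ from the unimodular lattices that do contain $\sqrt{2}\cdot\A_{15}^+$. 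The obstruction separates isometry classes \emph{within} the genus of $\Z^N$, which is invisible to localization.

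The theorem is quoted from Conway--Sloane (\cite[Theorem~13]{CS1989}), and the actual proof — there, and in the paper's own parallel arguments (Lemma~\ref{lem:m}, Propositions~\ref{prop:N1} and~\ref{prop:N2}, Remark~\ref{rem:new strategy}) — runs through the eutactic-star criterion, not local invariants: by Theorem~\ref{thm:iff_eut}, the self-dual lattice $\A_{15}^+$ is $2$-integrable if and only if it contains a $15$-dimensional eutactic star of scale $2$, and this is ruled out combinatorially via Lemma~\ref{lem:eut} by testing the identity $\sum_{i}(\bw,\bs_i)^2=2(\bw,\bw)$ against well-chosen test vectors $\bw$, using the explicit classification of vectors of norm at most $2$ in $\A_{15}^+$. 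To repair your write-up you would need to abandon the localization plan entirely and replace it with a counting argument of this type.
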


As $A_{15}^+$ is the unique irreducible unimodular lattice of rank $15$ (see \cite[p.~49]{CS1999}), following from  Theorems~\ref{thm:uni of rank 14}, \ref{thm:uni of rank 15} and~\ref{thm:embedding general}, we have:
\begin{lem}	\label{lem:12}
	The lattice $A_{15}^+$ is the unique unimodular lattice of rank $15$ which is not $2$-integrable. In particular, every non $2$-integrable lattice of rank $12$ is a sublattice in $\A_{15}^+$.
\end{lem}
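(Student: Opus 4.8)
The plan is to prove the two assertions in turn, reducing everything to the classification of rank-$15$ unimodular lattices together with the two integrability facts already recorded (Theorems~\ref{thm:uni of rank 14} and~\ref{thm:uni of rank 15}).

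First I would establish uniqueness. A positive definite $\Z$-lattice decomposes as an orthogonal sum of indecomposable lattices, and since $\A_{15}^+$ is the only indecomposable unimodular lattice of rank $15$, any unimodular lattice $M$ of rank $15$ not isometric to $\A_{15}^+$ splits as $M \simeq M_1 \perp \cdots \perp M_k$ with $k \geq 2$; each summand is automatically unimodular (the determinant being multiplicative and each factor a positive integer) and has rank at most $14$. The crucial observation is that $2$-integrability is preserved under orthogonal sums: if $\sqrt{2}\cdot M_i$ embeds isometrically into $\Z^{a_i}$ for each $i$, then $\sqrt{2}\cdot M = \sqrt{2}\cdot M_1 \perp \cdots \perp \sqrt{2}\cdot M_k$ embeds isometrically into $\Z^{a_1} \perp \cdots \perp \Z^{a_k} = \Z^{a_1 + \cdots + a_k}$. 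By Theorem~\ref{thm:uni of rank 14} each $M_i$ is $2$-integrable, so $M$ is $2$-integrable. Combined with Theorem~\ref{thm:uni of rank 15}, which says $\A_{15}^+$ is not $2$-integrable, this shows $\A_{15}^+$ is the unique non $2$-integrable unimodular lattice of rank $15$.

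For the ``in particular'' statement, let $L$ be a non $2$-integrable lattice of rank $12$. Applying Theorem~\ref{thm:embedding general} with $m = n + 3 = 15$ (condition~\eqref{thm:embedding general:4}, which holds unconditionally), I obtain a unimodular lattice $M$ of rank $15$ containing $L$ as a sublattice. I then argue that $M$ cannot be $2$-integrable: a sublattice of a $2$-integrable lattice is itself $2$-integrable, since the isometric embedding $\sqrt{2}\cdot M \hookrightarrow \Z^n$ restricts to an isometric embedding $\sqrt{2}\cdot L \hookrightarrow \Z^n$. Hence if $M$ were $2$-integrable then $L$ would be too, contradicting the choice of $L$. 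So $M$ is a non $2$-integrable unimodular lattice of rank $15$, and by the uniqueness just proved $M \simeq \A_{15}^+$. Therefore $L$ is isometric to a sublattice of $\A_{15}^+$, as claimed.

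The routine ingredients here---closure of $2$-integrability under orthogonal sums and under passage to sublattices---are immediate from the definition of $s$-integrability. The only step resting on external input is the decomposition of a rank-$15$ unimodular lattice into indecomposables together with the cited fact that $\A_{15}^+$ is the unique indecomposable one of rank $15$; this is where the classification of unimodular lattices of small rank via~\cite{CS1999} does the real work, and it is the part I would regard as the main, though standard, obstacle.
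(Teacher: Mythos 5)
Your proposal is correct and follows essentially the same route the paper intends: the paper states Lemma~\ref{lem:12} as an immediate consequence of the fact that $\A_{15}^+$ is the unique irreducible unimodular lattice of rank $15$, together with Theorems~\ref{thm:uni of rank 14}, \ref{thm:uni of rank 15} and condition~\eqref{thm:embedding general:4} of Theorem~\ref{thm:embedding general}, and your write-up simply makes explicit the two routine closure properties (under orthogonal sums and under passage to sublattices) that the paper leaves implicit. No gaps.
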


We derive the following as a corollary of Corollary~\ref{cor:det}:
\begin{cor}	\label{cor:disc of non 2-int}
   Suppose that  $L$ is a non $2$-integrable lattice of rank $12$ and determinant at most $27$.
   Then the determinant of $L$ is equal to one of $7,15,18,23$ and $25$.
\end{cor}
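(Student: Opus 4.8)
The plan is to argue by contraposition through the rank $n+2$ embedding criterion. If a rank $12$ lattice $L$ were a sublattice of a unimodular lattice $M$ of rank $14$, then $\sqrt{2}\cdot L$ would be a sublattice of $\sqrt{2}\cdot M$, and since $M$ is $2$-integrable by Theorem~\ref{thm:uni of rank 14}, any isometric embedding $\sqrt{2}\cdot M\hookrightarrow\Z^N$ would restrict to one of $\sqrt{2}\cdot L$; hence $L$ would be $2$-integrable. Thus a non $2$-integrable lattice of rank $12$ admits no embedding into a unimodular lattice of rank $14$. By Corollary~\ref{cor:det}, applied with $n=12$ so that $n+2=14$, it therefore suffices to show that every integer $D\in\{1,\dots,27\}$ outside $\{7,15,18,23,25\}$ satisfies the three sufficient conditions of that corollary when $\det(L)=D$: any lattice with such a determinant then embeds into a rank $14$ unimodular lattice and is $2$-integrable, so cannot be a counterexample.

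First I would fix the canonical decomposition $D=p_1^{\alpha_1}\cdots p_t^{\alpha_t}\,d$ of Corollary~\ref{cor:det}, in which $p_1,\dots,p_t$ are exactly the odd primes dividing $D$ to an even power and $d$ is the complementary factor; thus $d$ is a power of $2$ times a product of odd primes, each to an odd power. With this choice condition~(1) of Corollary~\ref{cor:det} holds automatically, since every odd prime dividing $d$ divides it to an odd order and every other odd prime outside $\{p_1,\dots,p_t\}$ does not divide $d$ at all. Consequently only conditions~(2) and~(3) can fail, and since each depends on $D$ alone, the outcome is the same for all lattices of determinant $D$.

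Next I would read off exactly when these two conditions fail for $D\le 27$. Condition~(3) fails precisely when $\nu_2(D)$ is even and the odd part $d/2^{\nu_2(d)}$ is congruent to $-1\pmod 8$. Scanning $D\le 27$, an odd part congruent to $7\pmod 8$ arises only for $D=7,15,23$ (all with $\nu_2(D)=0$), while the cases $\nu_2(D)\in\{2,4\}$ yield odd parts $1,3,5\pmod 8$ only. Condition~(2) can fail only when $D$ is divisible by the square of an odd prime, which for $D\le 27$ forces $D\in\{9,18,25\}$; here $\Ls{-1}{3}=-1$ confirms condition~(2) for $D=9$, whereas $\Ls{-2}{3}=\Ls{1}{3}=1$ and $\Ls{-1}{5}=1$ show it fails for $D=18$ and $D=25$. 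Collecting the failures gives exactly $\{7,15,18,23,25\}$, and for every other $D\le 27$ all three conditions hold.

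The conceptual content is light, so the real work is the bookkeeping in the last step: one must use the canonical decomposition consistently, confirm that condition~(1) is genuinely automatic, and verify that the scan over $\nu_2(D)\in\{0,2,4\}$ together with the odd-prime-square divisors $9,25$ is exhaustive for $D\le 27$. The one place demanding care is the interaction of conditions~(2) and~(3) at composite determinants, where I would check that no $D\le 27$ outside the listed five simultaneously evades both the $\pmod 8$ obstruction and the Legendre-symbol obstruction.
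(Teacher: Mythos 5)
Your proof is correct and follows essentially the same route as the paper: the contrapositive via Corollary~\ref{cor:det} (with $n=12$, so rank $14$) combined with Theorem~\ref{thm:uni of rank 14}. The paper leaves the determinant-by-determinant verification of the three conditions implicit, whereas you carry out that scan explicitly (and correctly, noting that condition~(2) only concerns odd primes dividing $D$ to \emph{even positive} order, so $D=27$ is harmless), so no substantive difference remains.
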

\begin{proof}
	Let $L$ be a lattice of rank $12$.
	Suppose that $\det(L)$ is not equal to $7,15,18,23$ and $25$.
	Then Corollary~\ref{cor:det} implies that $L$ is contained in a unimodular lattice of rank $14$.
	This together with Theorem~\ref{thm:uni of rank 14} implies that $L$ is $2$-integrable.
\end{proof}

This implies the following corollary, which gives candidates for non $2$-integrable lattices.

\begin{cor} \label{cor:no another lattice}
	Let $M$ be a sublattice in $\A_{15}^{+}$ which is generated by $3$ linearly independent elements of norm $3$,
	and $L$ be the sublattice orthogonal to $M$ in $\A_{15}^+$.	
	If $L$ is non $2$-integrable, then it is isometric to one of the lattices of rank $12$ in Theorem~\ref{thm:disc7} and Theorem~\ref{thm:main}.
\end{cor}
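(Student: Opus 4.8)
The plan is to pin down the Gram matrix of $M$ from the determinant of $L$, and then invoke the classification results already available. Write $M=\langle\bv_1,\bv_2,\bv_3\rangle$ with $q(\bv_i)=3$, so that the Gram matrix of $M$ is
\[
	\begin{pmatrix} 3 & a & b \\ a & 3 & c \\ b & c & 3 \end{pmatrix},
	\qquad a,b,c\in\Z .
\]
Since $\A_{15}^+$ is positive definite and $\bv_1,\bv_2,\bv_3$ are linearly independent, the Cauchy--Schwarz inequality gives $|a|,|b|,|c|<3$, hence $a,b,c\in\{-2,-1,0,1,2\}$. Let $\bar M:=L^{\perp}=(M^{\perp})^{\perp}$ denote the primitive closure of $M$ in $\A_{15}^+$. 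Because $\A_{15}^+$ is unimodular and $L$ is primitive, its orthogonal complement $L^{\perp}=\bar M$ satisfies $\det(L)=\det(\bar M)$, while $\det(M)=[\bar M:M]^2\det(\bar M)$. Consequently
\[
	\det(L)=\det(\bar M)\le\det(M)=27-3(a^2+b^2+c^2)+2abc\le 27 .
\]

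First I would apply Corollary~\ref{cor:disc of non 2-int}: since $L$ is a non $2$-integrable lattice of rank $12$ with $\det(L)\le 27$, its determinant lies in $\{7,15,18,23,25\}$. As $\det(L)\ge 7$, an index $[\bar M:M]\ge 2$ would force $\det(M)=[\bar M:M]^2\det(L)\ge 28>27$; hence $M$ is primitive and $\det(M)=\det(L)\in\{7,15,18,23,25\}$.

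The next step is a short finite check over $(a,b,c)\in\{-2,\dots,2\}^3$ of which target determinants occur. If some entry vanishes, then $abc=0$ and $\det(M)=27-3(a^2+b^2+c^2)\in\{27,24,21,15,12,3\}$, whose only value in the list is $15$, attained exactly when $(a,b,c)$ is, up to permutation, $(\pm2,0,0)$; this is the Gram matrix~\eqref{thm:main:1}. If all of $a,b,c$ are nonzero, then writing $s=a^2+b^2+c^2\in\{3,6,9,12\}$ and $p=abc$, the positive values of $27-3s+2p$ are $\{5,7,8,13,16,20\}$, of which only $7$ lies in the list; it occurs precisely for $s=12,\ p=+8$, that is $(a,b,c)=(\pm2,\pm2,\pm2)$ with an even number of minus signs. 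Negating generators $\bv_i$, which flips the two off-diagonal entries in row and column $i$, then brings this to~\eqref{thm:disc7:1}. In particular the values $18,23,25$ are never attained, so $\det(L)\in\{7,15\}$ and $M$ has Gram matrix~\eqref{thm:disc7:1} or~\eqref{thm:main:1}.

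Finally I would invoke the classification results. When $\det(L)=7$, the sublattice $M$ has Gram matrix~\eqref{thm:disc7:1}, so $L=M^{\perp}$ is one of the orthogonal sublattices in Theorem~\ref{thm:disc7}; by the Conway--Sloane remark these form precisely two isometry classes. When $\det(L)=15$, the sublattice $M$ has Gram matrix~\eqref{thm:main:1}, so by the first statement of Theorem~\ref{thm:main} (established in Section~\ref{sec:a}) $M$ is $\Aut(\A_{15}^+)$-equivalent to $\langle\ba,\bb,\bc\rangle$ or $\langle\ba,\bb,\bc'\rangle$, whence $L$ is isometric to one of the two lattices of Theorem~\ref{thm:main}. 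This yields the claim. The determinant bookkeeping is routine; the genuine difficulty is carried by the cited classifications, since the abstract Gram matrix of $M$ does not by itself determine $L$ up to isometry---indeed the single Gram matrix~\eqref{thm:main:1} already produces two non-isometric complements---so the real crux is the $\Aut(\A_{15}^+)$-orbit count of Theorem~\ref{thm:main} rather than the elementary estimates above.
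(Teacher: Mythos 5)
Your proof is correct and follows essentially the same route as the paper's: the Cauchy--Schwarz bound forcing off-diagonal entries into $\{-2,\dots,2\}$, the finite determinant check over $(a,b,c)$ (yielding exactly the paper's list $\{3,5,7,8,12,13,15,16,20,21,24,27\}$ and the $\Z$-congruence to \eqref{thm:disc7:1} or \eqref{thm:main:1} when the determinant is $7$ or $15$), Lemma~\ref{lem:pri4} combined with Corollary~\ref{cor:disc of non 2-int}, and finally the classifications from Theorem~\ref{thm:disc7} and Lemma~\ref{lem:thm}. The only organizational difference is that you argue in the contrapositive direction and note that $\det(L)\geq 7$ forces $M$ itself to be primitive (an index $[\bar M:M]\geq 2$ would give $\det(M)\geq 28>27$), which cleanly replaces the paper's bookkeeping with $P=(M\otimes \Q)\cap \A_{15}^+$ and the possible values $\det(M)/[P:M]^2$.
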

\begin{proof}
	We enumerate the positive definite matrices whose diagonal entries are $3$ in common and off-diagonal entries are in $\{-2,-1,0,1,2\}$, and let $\mathcal{G}$ be the set of these matrices.
	It is verified that $G$ is $\Z$-congruent to either the matrix~\eqref{thm:disc7:1} or \eqref{thm:main:1} for each $G \in \mathcal{G}$ with $\det(G) \in \{7,15\}$.
	Hence it suffices to show that, if $\det(M) \neq 7,15$, then $L$ is $2$-integrable.
	
	Suppose that $\det(M) \neq 7,15$.	
	Note that the Gram matrix with respect to some basis of $M$ is contained in $\mathcal{G}$.
	By calculating the determinants of all matrices in $\mathcal{G}$, we have
	\[
		\det(M) \in \{3, 5, 7, 8, 12, 13, 15, 16, 20, 21, 24,27\} \setminus \{7,15\}.
	\]
	Set $P := (M\otimes \Q) \cap \A_{15}^+$. Then $P$ is primitive in $\A_{15}^+$.
	(The definition of ``primitive'' is written in Section~\ref{sec:pri}.)
	Lemma~\ref{lem:pri4} implies $\det(L) = \det(P)$ since $L = M^\perp = P^\perp$ in $\A_{15}^+$.
	In addition, we have $\det(M) = \det(P)  [P:M]^2$.
	Hence we see that
	\[
		\det(L) = \det(P) \in \{1,2,3,4,5,6,7,8,9,12,13,15,16,20,21,24,27 \} \setminus \{7,15\}.
	\]
	This together with Corollary~\ref{cor:disc of non 2-int} implies that $L$ is $2$-integrable.
\end{proof}

\begin{rem}
	It is a natural question to wonder if we obtain more candidates for non $2$-integrable lattices of rank $12$ in $\A_{15}^+$.
	By using computer, we derive a better result than Corollary~\ref{cor:no another lattice} as follows:
	As discussed in Lemma~\ref{lem:thm}, it is possible to enumerate the lattices in $\A_{15}^+$ each of which is orthogonal to a lattice of rank $3$ generated by $3$ linearly independent elements of norm at most $4$.
	Since we can judge whether a given lattice is $2$-integrable by solving (with computer) the corresponding linear integer programming problem (see Lemma~\ref{lem:int_prog}), it turns out that there is no non $2$-integrable lattice among them except the non $2$-integrable lattices obtained in Theorem~\ref{thm:disc7} and Theorem~\ref{thm:main}.
	Now we may not immediately verify this result without computer.
\end{rem}
	
\section{Primitive sublattices}	\label{sec:pri}
For a lattice $L$, its \emph{dual} is the lattice $\{\mathbf{u}\in L\otimes \Q \mid (\mathbf{u},\mathbf{v}) \in \Z \text{ for all }\mathbf{v}\in L\}$, and we denote it by $L^*$. Let $L$ be a lattice, and $M$ a sublattice of it. 
The lattice $M$ is said to be \emph{primitive} if $M=M^* \cap L$. Note that $M=M^* \cap L$ if and only if $L/M$ is a free $\Z$-module.

\begin{lem}	\label{lem:pri3}
	Let $M$ be a sublattice of a lattice.
	If the determinant of $M$ is square free,
	then $M$ is primitive.
\end{lem}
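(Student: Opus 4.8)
The plan is to prove Lemma~\ref{lem:pri3} by showing its contrapositive flavour: I will argue that if $M$ is \emph{not} primitive, then $\det(M)$ is divisible by a square greater than $1$. By the remark following the definition of primitivity, $M$ is primitive if and only if $L/M$ is free, equivalently $M = M^* \cap L$ where $M^* \cap L$ is the \emph{primitive closure} of $M$ in $L$. So first I would set $P := (M \otimes \Q) \cap L$, which is the primitive sublattice of $L$ spanned by the same $\Q$-space as $M$; it contains $M$ as a finite-index sublattice, and $M$ is primitive exactly when $P = M$.

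The key computational input is the behaviour of the determinant under passing to a finite-index overlattice. For any finite-index inclusion $M \subseteq P$ of lattices of the same rank, one has the standard index formula
\begin{align*}
	\det(M) = [P:M]^2 \, \det(P).
\end{align*}
This is exactly the relation invoked in the proof of Corollary~\ref{cor:no another lattice} (``$\det(M) = \det(P)[P:M]^2$''), so I may treat it as available. First I would establish that $P$ is integral: since $P \subseteq L^*$ fails in general, I instead note that $P = (M\otimes\Q)\cap L$ is a sublattice of the integral lattice $L$, hence integral, and therefore $\det(P)$ is a (nonzero) integer. Then I would apply the index formula to the chain $M \subseteq P$.

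From here the argument is short. If $M$ is not primitive, then $P \neq M$, so the index $[P:M]$ is an integer strictly greater than $1$; pick any prime $q$ dividing $[P:M]$. Then $q^2 \mid [P:M]^2$, and since $\det(P)$ is a nonzero integer, the index formula gives $q^2 \mid \det(M)$. Hence $\det(M)$ is not square free. Taking the contrapositive yields exactly the statement: if $\det(M)$ is square free, then $M$ is primitive.

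I expect the only point requiring genuine care — rather than a routine calculation — to be the justification that $P = (M\otimes\Q)\cap L$ is a lattice of the same rank as $M$ that genuinely contains $M$ with finite index, together with the clean application of the index formula; the divisibility conclusion itself is immediate once $[P:M] > 1$ is in hand. No deep machinery is needed, so the main obstacle is really just bookkeeping to make sure the primitive closure $P$ is set up correctly and that the index formula is applied to an honest finite-index inclusion of full-rank sublattices.
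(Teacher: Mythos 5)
Your proposal is correct and is essentially the paper's own argument: the paper compares $M$ with $M^*\cap L$, applies the index formula $\det(M)=\det(M^*\cap L)\cdot[M^*\cap L:M]^2$, and uses square-freeness to force the index to be $1$, which is exactly your saturation-plus-index-formula argument stated contrapositively. Note only that your $P=(M\otimes\Q)\cap L$ coincides with the paper's $M^*\cap L$ precisely because $L$ is integral (so every $\bu\in L$ pairs integrally with $M$), and your parenthetical worry that ``$P\subseteq L^*$ fails in general'' is unfounded here since $P\subseteq L\subseteq L^*$; these are cosmetic points, not gaps.
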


\begin{proof}
    Suppose that $M$ is a sublattice of a lattice $L$.
    Then we have $M \subseteq M^* \cap L$,
    and hence $\det(M) = \det( M^* \cap L) \cdot [M^* \cap L : M]^2$.
    Since $\det(M)$ is square free, $M^* \cap L = M$ follows.
\end{proof}

\begin{lem}[{\cite[Proposition~1.2]{E}}]	\label{lem:pri4}
	Let $L$ be a unimodular lattice and $M$ be its primitive sublattice.
	Then, the determinant of $M$ is equal to that of the sublattice $M^{\perp}$ orthogonal to $M$ in $L$.	
\end{lem}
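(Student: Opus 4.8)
The plan is to use the relationship between a primitive sublattice and its orthogonal complement inside a unimodular lattice, comparing both of them with the dual lattices. Let $L$ be a unimodular lattice and $M$ a primitive sublattice of it, and write $N := M^{\perp}$ for the orthogonal complement. First I would consider the natural rational decomposition $L \otimes \Q = (M \otimes \Q) \perp (N \otimes \Q)$, which holds because $L$ is regular and $N$ is exactly the orthogonal complement of $M$ in $L$; in particular $\rank M + \rank N = \rank L$. The key object is the restriction map $\pi\colon L \to M^{*}$ obtained by sending $\bu \in L$ to the functional $\bv \mapsto (\bu,\bv)$ on $M$, identified with an element of $M^{*}$ via the form on $M \otimes \Q$.

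Next I would show that $\pi$ is surjective with kernel exactly $N$. Surjectivity is where unimodularity of $L$ enters: since $L = L^{*}$, every integral functional on $L$ is represented by a vector of $L$, and restricting such functionals to $M$ realizes every element of $M^{*}$; one checks this by extending a given element of $M^{*}$ to a functional on all of $L \otimes \Q$ that is integral on $L$, using $L = L^{*}$. The kernel of $\pi$ is $\{\bu \in L \mid (\bu,\bv)=0 \text{ for all } \bv \in M\} = M^{\perp} = N$ by definition. Hence $\pi$ induces an isomorphism of abelian groups $L/N \simeq M^{*}$. By the same argument with the roles of $M$ and $N$ exchanged (using that $N$ is also primitive, since $N = N^{*}\cap L$ follows from $N$ being an orthogonal complement in a regular lattice and $L/N$ being free), I would obtain $L/M \simeq N^{*}$.

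The final step is to convert these group isomorphisms into a determinant identity. From $L/N \simeq M^{*}$ and the fact that $L$ is unimodular, I would compute index-theoretic invariants: the index $[M^{*}:M] = |\det(M)|$ and similarly $[N^{*}:N] = |\det(N)|$, while $[L:M \perp N]$ can be read off from the short exact sequences in two ways. Concretely, $[L : M \perp N]^2 = \det(M)\det(N)/\det(L) = \det(M)\det(N)$ by the standard formula relating the determinant of a finite-index orthogonal sublattice to that of the ambient lattice, together with $\det(L)=1$. On the other hand, the isomorphism $L/N \simeq M^{*}$ gives $[L:N]=[M^{*}:1]$-type data that pins down $[L:M\perp N]$ as both $[M^{*}:M]^{1/2}$-compatible with $[N^{*}:N]$, forcing $\det(M)=\det(N)$. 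I expect the main obstacle to be bookkeeping the indices correctly: one must argue that the two quotients $L/(M\perp N)$ embed into $M^{*}/M$ and into $N^{*}/N$ as \emph{isomorphic} finite groups (the ``glue group''), so that $|\det(M)| = [M^{*}:M] = |L/(M\perp N)| = [N^{*}:N] = |\det(N)|$, and then to upgrade this equality of absolute values to equality of determinants using that both $M$ and $N$ are positive definite.
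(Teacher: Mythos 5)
The paper gives no proof of this lemma---it is quoted from Ebeling \cite[Proposition~1.2]{E}---and your argument is essentially the standard proof from that source: restriction gives a map $\pi\colon L \to M^{*}$, its kernel is $M^{\perp}$, surjectivity identifies $L/M^{\perp}$ with $M^{*}$, and comparing $L/(M \perp M^{\perp})$ with the glue groups $M^{*}/M$ and $(M^{\perp})^{*}/M^{\perp}$ yields the determinant equality. (Note the paper runs this machinery in the opposite direction: its Lemma~\ref{lem:pri2} deduces the glue-group decomposition $(M^{\perp})^{*} \perp M^{*} = L + M^{*}$ \emph{from} Lemma~\ref{lem:pri4}, whereas you would prove Lemma~\ref{lem:pri4} from the glue-group picture; both are consistent.)

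There is one misattributed step, and it is precisely where the primitivity hypothesis on $M$ must be consumed. You credit surjectivity of $\pi$ to unimodularity alone, ``using $L = L^{*}$'': you extend $\varphi \in M^{*}$ to a functional on $L \otimes \Q$ integral on $L$, then represent it by a vector of $L$. The second half is indeed $L = L^{*}$, but the extension exists only because $M$ is primitive: $L/M$ is free, so the inclusion $M \hookrightarrow L$ splits as $\Z$-modules, $L = M \oplus C$, and one may define the extension to vanish on $C$; equivalently, restriction $\Hom(L,\Z) \to \Hom(M,\Z)$ is onto because $L/M$ has no torsion. Without primitivity both this step and the lemma fail: for $L = \Z^{2}$ and $M = \Z(2,0)$ one has $M^{*} = \Z(1/2,0)$ while the image of $\pi$ is only $\Z(1,0)$, and indeed $\det(M) = 4 \neq 1 = \det(M^{\perp})$. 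Since your write-up invokes primitivity only for $N := M^{\perp}$ (where it is automatic, as $k\bu \in N$ with $k \neq 0$ forces $\bu \in N$), you should add this use for $M$ explicitly. Two smaller points: the phrase ``$[L:N] = [M^{*}:1]$-type data'' is not meaningful as indices, since $L/N \simeq M^{*}$ is infinite; and the final bookkeeping can be streamlined---because $\pi$ is surjective with kernel $N$ and $\pi|_{M}$ is the inclusion $M \hookrightarrow M^{*}$, one gets $L/(M \perp N) \simeq M^{*}/M$ directly, hence $[L : M \perp N] = \det(M)$, and by symmetry $[L : M \perp N] = \det(N)$, with no need for the quadratic identity $[L : M \perp N]^{2} = \det(M)\det(N)$.
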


\begin{lem}	\label{lem:pri2}
	Let $L$ be a unimodular lattice and $M$ be its primitive sublattice.
	Then,
	\begin{align}
		(M^{\perp})^* \perp M^* = L+M^* = \bigsqcup_{\bu+M \in M^*/M } \left( \bu+L \right), \label{lem:pro2:1}	
	\end{align}
	where $M^\perp$ denotes the sublattice orthogonal to $M$ in $L$.
\end{lem}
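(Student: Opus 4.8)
The plan is to prove the two equalities separately, reducing both to standard facts about primitive sublattices once the right projections are introduced. Throughout I would write $N := M^{\perp}$, and inside $V := L \otimes \Q$ put $W := M \otimes \Q$ and $W' := N \otimes \Q$. Since $L$ is positive definite, $V = W \perp W'$ with $W'$ equal to the orthogonal complement $W^{\perp}$ of $W$ in $V$ (a suitable integer multiple of any vector of $W^{\perp}$ lies in $L \cap W^{\perp} = N$, so $W^{\perp} \subseteq N \otimes \Q = W'$). Consequently $M^{*} \subseteq W$ and $N^{*} \subseteq W'$ are orthogonal, so the symbol $N^{*} \perp M^{*}$ means their internal orthogonal direct sum. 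The only inputs from primitivity I would use are $M^{*} \cap L = M$ (the definition), that $L/M$ is free, and $L \cap W = M$ (which follows from freeness of $L/M$); note also $L \cap W' = N$ holds by the very definition of $M^{\perp}$.

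For the equality $L + M^{*} = \bigsqcup_{\bu + M \in M^{*}/M}(\bu + L)$ I would argue purely additively. Since $M^{*} \subseteq V$, the set $L + M^{*}$ is the union $\bigcup_{\bu \in M^{*}}(\bu + L)$ of cosets of $L$. For $\bu, \bu' \in M^{*}$ the cosets $\bu + L$ and $\bu' + L$ coincide exactly when $\bu - \bu' \in M^{*} \cap L = M$, by primitivity; hence the distinct cosets are indexed bijectively by $M^{*}/M$ and are pairwise disjoint, which is the claimed decomposition.

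The equality $N^{*} \perp M^{*} = L + M^{*}$ is the substantive part, and here I would introduce the orthogonal projections $\pi \colon V \to W'$ and $\pi' \colon V \to W$. For $\bx \in L$ and any $\bv \in M \subseteq L$ one has $(\pi'(\bx), \bv) = (\bx, \bv) \in \Z$, so $\pi'(L) \subseteq M^{*}$; symmetrically $\pi(L) \subseteq N^{*}$. Writing $\bx = \pi'(\bx) + \pi(\bx)$ then gives $L \subseteq M^{*} \perp N^{*}$, hence $L + M^{*} \subseteq M^{*} \perp N^{*}$. The reverse inclusion rests on the claim $\pi(L) = N^{*}$, which I would establish by an index computation: $\pi|_{L}$ has kernel $L \cap W = M$ and carries $M \perp N$ onto $N$, so it induces an isomorphism $L/(M \perp N) \xrightarrow{\sim} \pi(L)/N$ and thus $[\pi(L) : N] = [L : M \perp N]$. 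Since $\det(M)\det(N) = [L : M \perp N]^{2}\det(L) = [L : M \perp N]^{2}$ by unimodularity of $L$, while $\det(M) = \det(N)$ by Lemma~\ref{lem:pri4}, I obtain $[L : M \perp N] = \det(N) = [N^{*} : N]$; combined with $N \subseteq \pi(L) \subseteq N^{*}$ this forces $\pi(L) = N^{*}$. Finally, given $\bz \in N^{*} = \pi(L)$, I choose $\bx \in L$ with $\pi(\bx) = \bz$ and write $\bz = \bx - \pi'(\bx) \in L + M^{*}$ since $\pi'(\bx) \in M^{*}$; as $L + M^{*}$ is a subgroup containing $M^{*}$, this yields $N^{*} \perp M^{*} \subseteq L + M^{*}$, completing the argument.

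The main obstacle is precisely the claim $\pi(L) = N^{*}$: the inclusion $\pi(L) \subseteq N^{*}$ is formal, but surjectivity is where the hypotheses genuinely enter, since it requires combining the unimodularity of $L$ with the equality $\det(M) = \det(N)$ from Lemma~\ref{lem:pri4} through the index identity $[\pi(L) : N] = [L : M \perp N] = \det(N)$.
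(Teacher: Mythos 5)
Your proof is correct, but it reaches the first equality by a different mechanism than the paper. The paper's argument is a pure index sandwich: since $L=L^*$ and $M \perp M^{\perp} \subseteq L$, dualizing gives $[(M^{\perp})^* \perp M^* : L] = [L : M^{\perp} \perp M] = \det(M)$ (the last step being the same combination of unimodularity with Lemma~\ref{lem:pri4} that you use), while primitivity gives $[L+M^*:L] = |M^*/(M^* \cap L)| = \det(M)$; the chain $L \subseteq L+M^* \subseteq (M^{\perp})^* \perp M^*$ then forces equality without ever constructing preimages. You instead prove the glue-theoretic statement $\pi(L) = (M^{\perp})^*$ for the orthogonal projection $\pi$ onto $M^{\perp} \otimes \Q$, via the induced isomorphism $L/(M \perp M^{\perp}) \cong \pi(L)/M^{\perp}$, and then exhibit each element of $(M^{\perp})^* \perp M^*$ explicitly as a lattice vector plus an element of $M^*$. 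The numerical core is identical --- both routes ultimately rest on $[L : M \perp M^{\perp}] = \det(M)$, obtained from $\det(M)\det(M^{\perp}) = [L : M \perp M^{\perp}]^2 \det(L)$ together with Lemma~\ref{lem:pri4} --- but your version avoids the dual-index duality $[B^*:A^*]=[A:B]$ that the paper invokes through $L=L^*$, and it makes explicit the containment $L \subseteq M^* \perp (M^{\perp})^*$, which the paper's proof uses silently; the price is the extra bookkeeping with the two projections and the kernel computation $\{ \bx \in L \mid \pi(\bx) \in M^{\perp}\} = M \perp M^{\perp}$, which you carry out correctly using $L \cap (M \otimes \Q) = M$. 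Your treatment of the second equality (cosets of $L$ in $L+M^*$ indexed by $M^*/M$ via $M^* \cap L = M$) is exactly the paper's one-line argument, spelled out.
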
	

\begin{proof}
	Since $L=L^*$, Lemma~\ref{lem:pri4} implies that
	\begin{align*}
		[ (M^{\perp})^* \perp M^* : L ]
		= [L:M^{\perp} \perp M]
		= \det(M).
	\end{align*}
	In addition,
	since $M$ is primitive,
	we have
	\begin{align*}
		[L + M^* : L ] = |(L + M^*) / L | = |M^* /(M^* \cap L)| = |M^*/M| = \det(M).
	\end{align*}
	Since $L \subseteq L + M^* \subseteq (M^{\perp})^* \perp M^*$,
	these forces $(M^{\perp})^* \perp M^* = L+M^*$.
	Thus, the first equality follows.
	The second equality follows by $M=M^* \cap L$.
\end{proof}

\section{The $s$-integrability and eutactic stars of scale $s$}	\label{sec:eut}
Since it is difficult to determine whether a lattice is $s$-integrable from its definition, Conway and Sloane~\cite{CS1989} gave equivalent conditions for a given lattice to be $s$-integrable in terms of eutactic stars.
Here we introduce them, and consider a eutactic star of scale $2$.
Hereafter, we let $\be_i$ denote the vector of which the $i$-th entry is $1$ and the others are $0$.

\begin{dfn}	\label{dfn:eut}
	Let $s$ be a positive integer.
	For positive integers $m \geq n$,
	let $\pr$ be the orthogonal projection from $\R^m$ to an $n$-dimensional subspace.
	Then vectors $\pr(\sqrt{s} \cdot \be_1) , \ldots, \pr(\sqrt{s} \cdot \be_m)$ (with repetitions allowed)
	are said to form an \emph{($n$-dimensional) eutactic star (of scale $s$)}.
\end{dfn}

\subsection{Eutactic stars of scale $s$}
Most proofs of non $s$-integrability of a given lattice are reduced to arguments using the following theorem and lemma.

\begin{thm}[{\cite[Theorem 3]{CS1989}}]	\label{thm:iff_eut}
	Let $s$ be a positive integer.
	A lattice $L$ of rank $n$ is $s$-integrable if and only if
	its dual $L^*$ contains an $n$-dimensional eutactic star of scale $s$.
\end{thm}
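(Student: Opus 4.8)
The plan is to reformulate the notion of an $n$-dimensional eutactic star of scale $s$ as an intrinsic algebraic identity, and then read off both implications from that reformulation. Concretely, I would first establish the following characterization: a tuple $\bt_1,\ldots,\bt_m$ spanning an $n$-dimensional quadratic space $U$ forms an $n$-dimensional eutactic star of scale $s$ if and only if
\[
	\sum_{i=1}^{m} (\bx,\bt_i)(\by,\bt_i) = s\,(\bx,\by) \qquad \text{for all } \bx,\by \in U .
\]
For the ``only if'' part one uses that the orthogonal projection $\pr$ onto an $n$-dimensional subspace is self-adjoint and restricts to the identity on that subspace, together with Parseval's identity for the orthonormal basis $\be_1,\ldots,\be_m$; this yields $\sum_i (\bx,\pr(\sqrt{s}\be_i))(\by,\pr(\sqrt{s}\be_i)) = s\sum_i (\bx,\be_i)(\by,\be_i) = s(\bx,\by)$ for $\bx,\by$ in the image. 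For the ``if'' part, given such $\bt_i$, one defines $\psi\colon U \to \R^m$ by $\psi(\bx) := \tfrac{1}{\sqrt{s}}\bigl((\bx,\bt_1),\ldots,(\bx,\bt_m)\bigr)$, checks that $\psi$ is an isometric embedding onto an $n$-dimensional subspace $U'$, and verifies that $\pr'(\sqrt{s}\be_i) = \psi(\bt_i)$ for the projection $\pr'$ onto $U'$; thus the $\bt_i$ are an isometric copy of a genuine projection star. Positive definiteness forces the $\bt_i$ to span $U$, so the spanning hypothesis is automatic.

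With this identity in hand, the forward implication proceeds as follows. Assuming $L$ is $s$-integrable, there is an isometric embedding of $\sqrt{s}\cdot L$ onto a rank-$n$ sublattice of $\Z^m$; I would extend it to an $\R$-linear isometry $f\colon L\otimes\R \to \R^m$ with image $U := f(L\otimes\R)$ and take $\pr\colon \R^m \to U$ the orthogonal projection. The projection star $\pr(\sqrt{s}\be_i)$ lies in $U$, and I set $\bt_i := f^{-1}(\pr(\sqrt{s}\be_i)) \in L\otimes\R$. Since $f^{-1}$ is an isometry, the characterization identity transports, so the $\bt_i$ form an $n$-dimensional eutactic star of scale $s$ in $L\otimes\R$. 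The remaining point is $\bt_i \in L^*$: for $\bv \in L$ one computes $(\bt_i,\bv) = (\pr(\sqrt{s}\be_i), f(\bv))$, and because $f(\sqrt{s}\bv)=\sqrt{s}f(\bv)$ lies in $\Z^m \cap U$, this equals the (integer) $i$-th coordinate of $f(\sqrt{s}\bv)$. Hence $L^*$ contains the desired star.

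For the converse, suppose $L^*$ contains an $n$-dimensional eutactic star of scale $s$, say $\bt_1,\ldots,\bt_m \in L^*$. By the characterization, $\sum_i (\bx,\bt_i)(\by,\bt_i) = s(\bx,\by)$ on $L\otimes\R$. I would define $\psi\colon L\otimes\R \to \R^m$ by $\psi(\bx) := \bigl((\bx,\bt_1),\ldots,(\bx,\bt_m)\bigr)$. For $\bv \in L$ each coordinate $(\bv,\bt_i)$ is an integer since $\bt_i \in L^*$, so $\psi(L) \subseteq \Z^m$; and $(\psi(\bx),\psi(\by)) = s(\bx,\by)$ shows $\psi$ carries the form of $L$ to $s$ times itself. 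Therefore $\sqrt{s}\bv \mapsto \psi(\bv)$ is an isometry of $\sqrt{s}\cdot L$ onto a sublattice of $\Z^m$, which is exactly $s$-integrability.

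I expect the main obstacle to be the bookkeeping in the characterization lemma together with the duality scaling in the forward direction: one must track the factors $\sqrt{s}$ and the identity $(\sqrt{s}\cdot L)^* = \tfrac{1}{\sqrt{s}}L^*$, since it is precisely this rescaling that converts the projection star (naturally landing in the dual of the embedded lattice, an isometric copy of $\tfrac{1}{\sqrt{s}}L^*$) into a scale-$s$ star landing in $L^*$. The self-adjointness of $\pr$ and the fact that $\pr$ is the identity on its image are the two elementary facts doing the real work.
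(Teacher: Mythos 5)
Your proof is correct, and it is essentially the standard Conway--Sloane argument: note that the paper itself states this theorem only with the citation \cite{CS1989} and gives no proof, and your characterization identity $\sum_{i}(\bx,\bt_i)(\by,\bt_i)=s(\bx,\by)$ is, after polarization, exactly the paper's Lemma~\ref{lem:eut} (also quoted from \cite{CS1989} without proof), which you prove rather than assume. Both directions are handled correctly --- transporting the projection star through the isometry $f$ and verifying integrality via $(\bt_i,\bv)=(\be_i,f(\sqrt{s}\,\bv))\in\Z$, and conversely sending $\sqrt{s}\,\bv\mapsto\bigl((\bv,\bt_1),\ldots,(\bv,\bt_m)\bigr)$ --- including the $\sqrt{s}$-rescaling between the dual of the embedded lattice and $L^*$ that you correctly flagged as the delicate point.
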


\begin{lem}[{\cite[pp.~215--216]{CS1989}}]\label{lem:eut}
	A necessary and sufficient condition for $\bs_1,\ldots,\bs_m \in \R^n$ to be an $n$-dimensional eutactic star of scale $s$
	is that, for each $\bw \in \R^n$,
	\begin{align}\label{lem:eut:1}
		\sum_{i = 1}^m (\bw,\bs_i)^2 = s (\bw,\bw).
	\end{align}			
\end{lem}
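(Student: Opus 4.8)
The plan is to encode the vectors $\bs_1,\ldots,\bs_m$ as the columns of an $n \times m$ matrix $\Sigma = [\bs_1 \mid \cdots \mid \bs_m]$, reduce the identity \eqref{lem:eut:1} to a single matrix equation, and then read off both implications. The right-hand side of \eqref{lem:eut:1} equals $\bw^\top (s I_n) \bw$, while the left-hand side equals $|\Sigma^\top \bw|^2 = \bw^\top \Sigma \Sigma^\top \bw$, since the $i$-th entry of $\Sigma^\top \bw$ is $(\bw, \bs_i)$. As a symmetric matrix is determined by its associated quadratic form, condition \eqref{lem:eut:1} holds for every $\bw \in \R^n$ if and only if $\Sigma \Sigma^\top = s I_n$. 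It therefore suffices to show that $\bs_1,\ldots,\bs_m$ form an $n$-dimensional eutactic star of scale $s$ if and only if $\Sigma \Sigma^\top = s I_n$.

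For necessity, I would write $\bs_i = \pr(\sqrt{s}\,\be_i)$, where $\pr \colon \R^m \to W$ is the orthogonal projection onto an $n$-dimensional subspace $W$ that is identified isometrically with $\R^n$. Choosing an orthonormal basis of $W$ and collecting it into an $m \times n$ matrix $Q$ with $Q^\top Q = I_n$, the coordinate vector of $\pr(\bv)$ relative to this basis is $Q^\top \bv$. Hence $\bs_i = Q^\top (\sqrt{s}\,\be_i) = \sqrt{s}\, Q^\top \be_i$, so $\Sigma = \sqrt{s}\, Q^\top$ and $\Sigma \Sigma^\top = s\, Q^\top Q = s I_n$, as required.

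For sufficiency, assuming $\Sigma \Sigma^\top = s I_n$, I would set $Q := \tfrac{1}{\sqrt{s}}\, \Sigma^\top$, an $m \times n$ matrix satisfying $Q^\top Q = \tfrac{1}{s}\, \Sigma \Sigma^\top = I_n$. Its columns are then orthonormal and span an $n$-dimensional subspace $W \subseteq \R^m$; let $\pr$ be the orthogonal projection onto $W$, with $W$ identified with $\R^n$ through $Q$. The coordinate vector of $\pr(\sqrt{s}\,\be_i)$ is $Q^\top(\sqrt{s}\,\be_i) = \sqrt{s}\, Q^\top \be_i$, which is exactly the $i$-th column of $\sqrt{s}\, Q^\top = \Sigma$, namely $\bs_i$. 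Thus $\bs_1,\ldots,\bs_m$ arise as the projections $\pr(\sqrt{s}\,\be_i)$ and form a eutactic star of scale $s$.

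The step needing the most care is the identification of the abstract subspace $W$ with $\R^n$ in the definition of a eutactic star: one must make this identification by a genuine isometry — here the orthonormal-basis matrix $Q$ — so that inner products, and hence the norm equality \eqref{lem:eut:1}, are preserved, and one must verify that in these coordinates the orthogonal projection is exactly $\bv \mapsto Q^\top \bv$, which relies on $Q^\top Q = I_n$ together with the self-adjointness of $\pr$. Once this bookkeeping is fixed, both directions follow at once from the factorization $\Sigma = \sqrt{s}\, Q^\top$.
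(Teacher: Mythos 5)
Your proof is correct. The paper itself offers no proof of this lemma --- it is imported verbatim from Conway--Sloane \cite[pp.~215--216]{CS1989} --- and your argument is essentially the standard one underlying their discussion: the identity \eqref{lem:eut:1} holds for all $\bw$ iff $\Sigma\Sigma^\top = sI_n$ (by polarization), which holds iff $\tfrac{1}{\sqrt{s}}\Sigma^\top$ has orthonormal columns, i.e.\ iff the $\bs_i$ are the images of $\sqrt{s}\,\be_1,\ldots,\sqrt{s}\,\be_m$ under orthogonal projection onto an $n$-dimensional subspace, written in an orthonormal coordinate system. You also handle the one point Definition~\ref{dfn:eut} leaves implicit --- that the subspace must be identified with $\R^n$ by a genuine isometry, and that in such coordinates the projection is $\bv \mapsto Q^\top\bv$ --- and note that $m \geq n$ comes for free in the sufficiency direction, since $\Sigma\Sigma^\top = sI_n$ forces $\rank\Sigma = n \leq m$.
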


According to the following lemma,
determining whether a lattice is $s$-integrable is equivalent to judging the existence of a non-negative integer solution of a system of linear equations.
Hence, it can be determined by computer if the number of variables is few.

\begin{lem}	\label{lem:int_prog}
	Let $s$ be a positive integer,
	$L$ a lattice with a basis $\bw_1,\ldots,\bw_n$,
	and $\bu_1,\ldots,\bu_N$ the pairwise distinct vectors in $L^*$ of norm at most $s$.
	Then $L$ is $s$-integrable if and only if the following system of equations has a non-negative integer solution $(x_1,\ldots,x_N)$:
	\begin{align}	\label{lem:int_prog:1}
		\sum_{k=1}^N (\bw_i+\bw_j,\bu_k)^2 x_k = s (\bw_i+\bw_j,\bw_i+\bw_j) \quad (i,j = 1,\ldots,n).
	\end{align}
\end{lem}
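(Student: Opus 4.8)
The plan is to combine Theorem~\ref{thm:iff_eut} with Lemma~\ref{lem:eut}, reducing the $s$-integrability of $L$ to the existence of an eutactic star inside $L^*$, and then to reduce the eutactic-star condition to the stated finite system of equations. First I would observe that, by Theorem~\ref{thm:iff_eut}, the lattice $L$ is $s$-integrable if and only if $L^*$ contains an $n$-dimensional eutactic star of scale $s$; that is, there exist vectors $\bs_1,\ldots,\bs_m \in L^*$ (for some $m$, with repetitions allowed) satisfying the condition of Lemma~\ref{lem:eut}. The key reformulation is that prescribing such a star, up to repetition, amounts to assigning a non-negative integer multiplicity $x_k$ to each vector $\bu_k$ occurring in the star: writing $x_k$ for the number of indices $i$ with $\bs_i = \bu_k$, Lemma~\ref{lem:eut} becomes $\sum_k (\bw,\bu_k)^2 x_k = s(\bw,\bw)$ for all $\bw \in \R^n$.

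Next I would argue that only vectors of norm at most $s$ can occur with positive multiplicity, which is what confines the sum to the finite list $\bu_1,\ldots,\bu_N$. Indeed, taking $\bw = \bu_k$ in the eutactic identity forces $(\bu_k,\bu_k)^2 x_k \le \sum_i (\bu_k,\bs_i)^2 = s(\bu_k,\bu_k)$, whence $(\bu_k,\bu_k) \le s$ whenever $x_k > 0$. Thus any eutactic star in $L^*$ uses only vectors from the finite set $\{\bu_1,\ldots,\bu_N\}$, and conversely a non-negative integer vector $(x_1,\ldots,x_N)$ supported on this set defines an eutactic star precisely when the quadratic identity holds for every $\bw$. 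This yields the equivalence of $s$-integrability with the solvability, in non-negative integers, of
\[
	\sum_{k=1}^N (\bw,\bu_k)^2 x_k = s(\bw,\bw) \quad \text{for all } \bw \in \R^n.
\]

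The final step is to replace the uncountable family of constraints indexed by $\bw \in \R^n$ with the finite system~\eqref{lem:int_prog:1}. Both sides of the displayed identity are quadratic forms in $\bw$, hence are determined by their associated symmetric bilinear forms, and two symmetric bilinear forms on $\R^n$ agree if and only if they agree on all pairs $(\bw_i,\bw_j)$ from a basis; by polarization this is equivalent to agreement of the quadratic forms on the vectors $\bw_i$ and $\bw_i+\bw_j$ for $i,j = 1,\ldots,n$. Evaluating at $\bw = \bw_i + \bw_j$ gives exactly equations~\eqref{lem:int_prog:1}, and the diagonal cases $i = j$ recover the evaluations at the basis vectors $\bw_i$ (up to the factor coming from $\bw_i+\bw_i = 2\bw_i$). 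I expect the main obstacle to be the bookkeeping in this last reduction: one must verify carefully that agreement of the two quadratic forms on the finitely many vectors $\{\bw_i + \bw_j\}$ already forces agreement on all of $\R^n$, so that no further constraints are lost. This is a routine but essential application of polarization, and once it is in place the equivalence with the integer programming feasibility problem follows immediately.
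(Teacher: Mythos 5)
Your proposal is correct and takes essentially the same route as the paper: Theorem~\ref{thm:iff_eut} combined with Lemma~\ref{lem:eut}, encoding the star by multiplicities $x_k$, confining support to the vectors of norm at most $s$, and using polarization to reduce the identity for all $\bw\in\R^n$ to the finite system~\eqref{lem:int_prog:1}. You even make explicit two points the paper leaves implicit, namely the bound $(\bs_i,\bs_i)\le s$ (which also follows directly from the star vectors being projections of vectors of norm $s$) and the polarization argument showing the equations at $\bw_i+\bw_j$ suffice.
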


		
\begin{proof}
	Theorem~\ref{thm:iff_eut} asserts that $L$ is $s$-integrable if and only if $L^*$ contains an $n$-dimensional eutactic star of scale $s$.
	Thus it is sufficient to show that the two conditions that the dual lattice $L^*$ contains an $n$-dimensional eutactic star of scale $s$ and that the equation~(\ref{lem:int_prog:1}) has a non-negative integer solution are equivalent.

	Suppose that $\bs_1,\ldots, \bs_m$ in $L^*$ is a eutactic star of scale $s$.
	As $(\bs_i,\bs_i)\leq s$ for each $i$, we have $\bs_1,\ldots, \bs_m\in\{\bu_1, \ldots, \bu_N\}$.
	Now applying Lemma~\ref{lem:eut}, we find that a solution $(x_1,\ldots,x_N)$ of the equation~(\ref{lem:int_prog:1}) can be given by setting $x_j = |\{i \mid \bs_i = \bu_j \}|$ for $j = 1,\ldots, N$.
	
	Now suppose $(x_1,\ldots,x_N)$ is a non-negative integer solution for the equation~(\ref{lem:int_prog:1}). Then the multiple set $\{\bu_1^{(x_1)},\ldots,\bu_N^{(x_N)}\}$ is a eutactic star of scale $s$ in $L^*$ by using Lemma~\ref{lem:eut} again. This completes the proof.
\end{proof}

\begin{lem}	\label{lem:hoge}
	Let $L$ be a lattice,
	$N$ its sublattice, and
	$\bw$ a nonzero element in $N$.
 If $\bs_1,\ldots,\bs_m\in N^* \setminus \Q \bw$ form a eutactic star of scale $s$,
	then
	\begin{align*}
        s(\bw,\bw) \leq
        |\{ i \in \{1,\ldots,m\} \mid (\bw,\bs_i) \neq 0 \}| \lceil \sqrt{s(\bw,\bw)}-1 \rceil^2.
	\end{align*}
\end{lem}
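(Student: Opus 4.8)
The plan is to apply the eutactic star identity of Lemma~\ref{lem:eut} twice: once to the vector $\bw$ to produce the global sum we want to bound, and once to the star vectors themselves to control their norms. First I would feed the test vector $\bw$ into \eqref{lem:eut:1}, which (since $\bw$ lies in the span of $N$, where the eutactic identity holds) gives
\[
	\sum_{i=1}^{m} (\bw,\bs_i)^2 = s(\bw,\bw).
\]
Because $\bw \in N$ and each $\bs_i \in N^*$, every number $(\bw,\bs_i)$ is an integer, so the right-hand side is a sum over only those indices with $(\bw,\bs_i)\neq 0$. It therefore suffices to prove the pointwise bound $(\bw,\bs_i)^2 \leq \lceil \sqrt{s(\bw,\bw)}-1\rceil^2$ for each such index, since summing it over the $|\{i : (\bw,\bs_i)\neq 0\}|$ nonzero terms yields exactly the asserted inequality.

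To get the pointwise bound I would first record that every star vector has norm at most $s$: applying \eqref{lem:eut:1} to the test vector $\bs_j$ gives $\sum_i (\bs_j,\bs_i)^2 = s(\bs_j,\bs_j)$, and isolating the $i=j$ summand forces $(\bs_j,\bs_j)^2 \leq s(\bs_j,\bs_j)$, hence $(\bs_j,\bs_j)\leq s$. Now fix an index $i$ with $(\bw,\bs_i)\neq 0$. The hypothesis $\bs_i\notin\Q\bw$ together with $\bw\neq 0$ forces $\bw$ and $\bs_i$ to be linearly independent, so the Cauchy--Schwarz inequality is \emph{strict}, and combining it with the norm bound gives
\[
	(\bw,\bs_i)^2 < (\bw,\bw)(\bs_i,\bs_i) \leq s(\bw,\bw),
\]
that is, $|(\bw,\bs_i)| < \sqrt{s(\bw,\bw)}$.

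The final step is elementary: $|(\bw,\bs_i)|$ is a positive integer strictly below $t:=\sqrt{s(\bw,\bw)}$, and any positive integer $N<t$ satisfies $N\leq\lceil t-1\rceil$, which I would verify by splitting into the cases where $t$ is or is not an integer. Squaring gives $(\bw,\bs_i)^2 \leq \lceil \sqrt{s(\bw,\bw)}-1\rceil^2$, and summing over the indices with $(\bw,\bs_i)\neq 0$ completes the proof. The point deserving the most care is the strictness of Cauchy--Schwarz: if one only had the non-strict inequality, the argument would fail precisely when $t$ is an integer and some $\bs_i$ realizes $(\bs_i,\bs_i)=s$, so it is exactly here that the hypothesis $\bs_i\notin\Q\bw$ is essential.
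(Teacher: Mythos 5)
Your proof is correct and follows essentially the same route as the paper's: integrality of $(\bw,\bs_i)$ from $\bw\in N$, $\bs_i\in N^*$; strict Cauchy--Schwarz forced by $\bs_i\notin\Q\bw$; the norm bound $(\bs_i,\bs_i)\leq s$; and the eutactic identity of Lemma~\ref{lem:eut} applied to $\bw$. Your only variation is deriving $(\bs_j,\bs_j)\leq s$ from the eutactic identity itself (testing with $\bs_j$), whereas the paper leaves this implicit in the definition of a eutactic star as projections of vectors of norm $s$ -- a harmless and in fact slightly more self-contained touch.
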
		
\begin{proof}
	Since $\bs_i \not\in \Q \bw$ and $\bw \in N$, for the integer $|(\bw,\bs_i)|$, we have
	 $$|(\bw,\bs_i)|<\sqrt{(\bw,\bw)(\bs_i,\bs_i)}\leq\sqrt{s(\bw,\bw)}.$$
	This together with Lemma~\ref{lem:eut} implies the desired conclusion.
\end{proof}

\subsection{Eutactic stars of scale $2$}
Assume that vectors $\bs_1,\ldots,\bs_m$ form a eutactic star of scale $s$, and $G$ is the Gram matrix of them.
By Definition \ref{dfn:eut}, we conclude that the matrix $sI-G$ is positive semi-definite.
Using this, we give lemmas to examine the properties of each pair of $\bs_i$ and $\bs_j$.	

\begin{lem}	\label{lem:gram}
	For two real symmetric matrices $G := \BB{\alpha}{\beta}{\beta}{\gamma}$ and $A := \BB{2}{\delta}{\delta}{2}$, the following hold:
	\begin{enumerate}
		\item	If $G$ is positive semi-definite,		\label{c1}
			then	$\alpha \gamma \geq \beta^2$.
		\item		\label{c2}
			If both $2I-G$ and $A-G$ are positive semi-definite,
			then $(2-\alpha)(2-\gamma) \geq (\delta/2)^2.$
	\end{enumerate}		
\end{lem}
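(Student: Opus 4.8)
The plan is to reduce both assertions to the standard fact that a real symmetric $2 \times 2$ matrix is positive semi-definite if and only if both of its diagonal entries and its determinant are non-negative.

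For the first assertion, positive semi-definiteness of $G$ gives in particular $\det(G) = \alpha\gamma - \beta^2 \geq 0$, which is precisely $\alpha\gamma \geq \beta^2$; nothing more is needed.

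For the second assertion, I would write $a := 2 - \alpha$ and $c := 2 - \gamma$ and record the two matrices $2I - G$ and $A - G$, whose off-diagonal entries are $-\beta$ and $\delta - \beta$ respectively, while both share the diagonal entries $a$ and $c$. Applying the criterion above to each hypothesis yields $a \geq 0$, $c \geq 0$ together with the determinant inequalities $ac \geq \beta^2$ and $ac \geq (\delta - \beta)^2$; equivalently $|\beta| \leq \sqrt{ac}$ and $|\delta - \beta| \leq \sqrt{ac}$.

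The decisive step is then to combine these two bounds. Writing $\delta = \beta + (\delta - \beta)$ and applying the triangle inequality gives $|\delta| \leq |\beta| + |\delta - \beta| \leq 2\sqrt{ac}$, so that $(\delta/2)^2 \leq ac = (2-\alpha)(2-\gamma)$, which is the claim. I expect this last combination to be the only point requiring a moment's thought: the naive move of adding the two positive semi-definite matrices (a sum of positive semi-definite matrices is again positive semi-definite) only produces the weaker-looking bound $(2-\alpha)(2-\gamma) \geq \bigl((\delta - 2\beta)/2\bigr)^2$, so one has to notice that it is the triangle-inequality route through $\delta = \beta + (\delta - \beta)$, rather than matrix addition, that delivers the stated inequality.
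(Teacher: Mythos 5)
Your proof is correct and takes essentially the same approach as the paper: both arguments extract the determinant inequalities $(2-\alpha)(2-\gamma)\geq\beta^2$ and $(2-\alpha)(2-\gamma)\geq(\delta-\beta)^2$ from the two positive semi-definiteness hypotheses and then combine them (you rightly note that adding the matrices themselves gives only the weaker bound). The only cosmetic difference is the final combination: you take square roots and apply the triangle inequality $|\delta|\leq|\beta|+|\delta-\beta|$, whereas the paper rewrites $\beta^2=(\delta/2+(\beta-\delta/2))^2$ and $(\delta-\beta)^2=(\delta/2-(\beta-\delta/2))^2$ and adds the two scalar inequalities so that the cross terms cancel---an algebraically equivalent manipulation.
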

\begin{proof}
	(\ref{c1})
	The desired result clearly holds.
	(\ref{c2})
	Since $2I-G$ is positive semi-definite, we have
	\[
		(2-\alpha)(2-\gamma) \geq \beta^2 = ( \delta/2 + (\beta - \delta/2) )^2.
	\]
	Similarly,
	since $A-G$ is positive semi-definite,
	we have
	\[
		(2-\alpha)(2-\gamma) \geq (\delta-\beta)^2 = ( \delta/2 - (\beta - \delta/2) )^2.
	\]
	Thus, adding the above two inequalities, we derive the desired inequality.
\end{proof}

The following lemma helps to restrict candidates for eutactic stars.

\begin{lem}	\label{lem:v}
	Let $L$ be a lattice,
	and $\bv_1, \ldots, \bv_m$ elements of norm $2$ in $L$.
	Let $N$ be a sublattice of $L$,
	and $\bs_i$ denote the orthogonal projections of $\bv_i$ to $N\otimes\Q$ for each $i$.
	Suppose that $\bs_1,\ldots,\bs_m$ form a eutactic star of scale $2$.
	Then the following hold:
	\begin{enumerate}
		\item	\label{lem:v:1}
		If the norms of $\bs_1,\ldots,\bs_m$ are greater than $1$,	then $\bv_1,-\bv_1, \ldots,\bv_m, -\bv_m$ are pairwise distinct.
		\item	\label{lem:v:2}
		For any integers $i$ and $j$,
		if $(2-(\bs_i,\bs_i))(2-(\bs_j,\bs_j))$ is less than $1/4$,
	 	then $|(\bv_i,\bv_j)|=0$ or $2$.
	\end{enumerate}
\end{lem}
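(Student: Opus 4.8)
The plan is to reduce both statements to assertions about $2\times 2$ Gram matrices and then feed them into Lemma~\ref{lem:gram}. First I would write each $\bv_i = \bs_i + \bt_i$ for the orthogonal decomposition of $\bv_i$ relative to $N\otimes\Q$, so that $\bs_i\in N\otimes\Q$ is the given projection and $\bt_i$ lies in its orthogonal complement. Orthogonality together with $(\bv_i,\bv_i)=2$ gives the identities
\[
	(\bt_i,\bt_i) = 2 - (\bs_i,\bs_i), \qquad (\bt_i,\bt_j) = (\bv_i,\bv_j) - (\bs_i,\bs_j).
\]
Setting $\alpha := (\bs_i,\bs_i)$, $\gamma := (\bs_j,\bs_j)$, $\beta := (\bs_i,\bs_j)$ and $\delta := (\bv_i,\bv_j)$, this says precisely that the Gram matrix of $\bt_i,\bt_j$ equals $A-G$, where $G := \BB{\alpha}{\beta}{\beta}{\gamma}$ is the Gram matrix of $\bs_i,\bs_j$ and $A := \BB{2}{\delta}{\delta}{2}$ is the Gram matrix of $\bv_i,\bv_j$ (the diagonal of $A$ is $2$ exactly because $\bv_i,\bv_j$ have norm $2$). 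Being a Gram matrix, $A-G$ is positive semi-definite. On the other hand, since $\bs_1,\ldots,\bs_m$ form a eutactic star of scale $2$, the full Gram matrix $\widetilde G$ of $\bs_1,\ldots,\bs_m$ has $2I-\widetilde G$ positive semi-definite (as recorded at the start of this subsection), and hence so is its principal $2\times2$ submatrix $2I-G$.

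For part~\eqref{lem:v:1}, I would argue by contradiction, supposing $\bv_i = \pm\bv_j$ for some $i\neq j$ (the equality $\bv_i = -\bv_i$ is excluded at once since $\bv_i\neq 0$). Applying the projection $\pr$ gives $\bs_i = \pm\bs_j$, so $\gamma=\alpha$ and $\beta^2=\alpha^2$. Then Lemma~\ref{lem:gram}\eqref{c1} applied to the positive semi-definite matrix $2I-G$ yields $(2-\alpha)(2-\gamma)\geq\beta^2$; since $\gamma=\alpha$ and $\beta^2=\alpha^2$, this reads $(2-\alpha)^2\geq\alpha^2$, i.e.\ $\alpha\leq 1$. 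This contradicts the hypothesis $(\bs_i,\bs_i)=\alpha>1$, and therefore the $2m$ vectors are pairwise distinct.

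For part~\eqref{lem:v:2}, I would feed the two positive semi-definite matrices $2I-G$ and $A-G$ into Lemma~\ref{lem:gram}\eqref{c2}, obtaining
\[
	(2-(\bs_i,\bs_i))(2-(\bs_j,\bs_j)) \geq (\delta/2)^2 = \bigl((\bv_i,\bv_j)/2\bigr)^2.
\]
Under the hypothesis $(2-(\bs_i,\bs_i))(2-(\bs_j,\bs_j))<1/4$ this forces $(\bv_i,\bv_j)^2<1$; since $L$ is integral, $(\bv_i,\bv_j)$ is an integer, so $(\bv_i,\bv_j)=0$ when $i\neq j$, while $(\bv_i,\bv_j)=(\bv_i,\bv_i)=2$ when $i=j$. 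In either case $|(\bv_i,\bv_j)|\in\{0,2\}$.

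The conceptual heart of the argument, and the step I expect to be the main obstacle to get exactly right, is the recognition that $A-G$, with $A$ the Gram matrix of the original norm-$2$ vectors $\bv_i,\bv_j$, is itself the Gram matrix of the orthogonal tails $\bt_i,\bt_j$ and is therefore automatically positive semi-definite. This is what lets Lemma~\ref{lem:gram}\eqref{c2} apply with $\delta=(\bv_i,\bv_j)$, converting a constraint on the projected lengths $(\bs_i,\bs_i)$ into the arithmetic inequality $(\bv_i,\bv_j)^2<1$. The remaining points, namely extracting the relevant principal submatrix of $2I-\widetilde G$ and exploiting the Cauchy--Schwarz degeneracy $\beta^2=\alpha^2$ forced by $\bs_i=\pm\bs_j$, are routine once this identification is in place.
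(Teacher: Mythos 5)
Your proposal is correct and takes essentially the same approach as the paper: both reduce to the $2\times 2$ Gram matrices $G$ of $\bs_i,\bs_j$ and $A$ of $\bv_i,\bv_j$, observe that $2I-G$ (from the eutactic-star condition) and $A-G$ (being the Gram matrix of the orthogonal tails $\bt_i,\bt_j$) are positive semi-definite, and feed them into Lemma~\ref{lem:gram}. The only deviations are cosmetic: in part~(\ref{lem:v:1}) you use Lemma~\ref{lem:gram}~(\ref{c1}) together with the degeneracy $\bs_i=\pm\bs_j$, and handle $\bv_i=-\bv_j$ explicitly, where the paper applies Lemma~\ref{lem:gram}~(\ref{c2}) with $\delta=2$ after reducing to the case $\bv_i=\bv_j$; and in part~(\ref{lem:v:2}) you argue directly with $\delta=(\bv_i,\bv_j)$ (the $i=j$ case being trivially true since $(\bv_i,\bv_i)=2$) rather than via the paper's contrapositive with $\delta=\pm1$.
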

\begin{proof}
	(\ref{lem:v:1})
	It suffices to show that
	$\bv_1,\ldots,\bv_m$ are pairwise distinct.
	To prove by contradiction, we suppose that $i \neq j$ and $\bv_i = \bv_j$.
	Let $A$ be the Gram matrix with respect to $\bv_i$ and $\bv_j$,
	and $G$ the Gram matrix with respect to $\bs_i$ and $\bs_j$.
	Then, $2I-G$ is positive semi-definite
	since $\bs_i$ and $\bs_j$ are elements of a eutactic star of scale $2$.
	In addition, $A-G$ is also postive semi-definite.
	By applying Lemma~\ref{lem:gram}~(\ref{c2}) with $\delta:=2$,
	we have
	$
		(2-(\bs_i,\bs_i))(2-(\bs_j,\bs_j)) \geq (2/2)^2 = 1.
	$
	Since the norms of $\bs_i$ and $\bs_j$ are greater than $1$,
	the left hand side is smaller than $1$.
	This is a contradiction.
	Thus, $\bv_i \neq \bv_j$ holds,
	and the desired conclusion follows.
	(\ref{lem:v:2})
	Let $i$ and $j$ be integers in $\{1,\ldots, m \}$.
	To prove the contrapositive,
	we suppose that $(\bv_i,\bv_j)= \pm 1$ holds.
	Let $A$ be the Gram matrix with respect to $\bv_i$ and $\bv_j$,
	and $G$ that of $\bs_i$ and $\bs_j$.
	Then $2I-G$ and $A-G$ are positive semi-definite.
	By applying Lemma~\ref{lem:gram} with $\delta:= \pm1$,
	we have
	$
		(2-(\bs_i,\bs_i))(2-(\bs_j,\bs_j)) \geq (\pm 1/2)^2 = 1/4.
	$
	This is the desired result.
\end{proof}
	
\section{The lattice $\A_{15}^+$}		\label{sec:a}
The lattice $\A_{15}^+$ is given in Definition~\ref{dfn:A15p}.
Let $\sR$ be the set of elements in $\A_{15}^{+}$ of norm $2$.
Since the minimal norm in the nonzero cosets of $\A_{15}$ in $\A_{15}^{+}$ is $3$,
$\sR$ is also the set of elements in $\A_{15}$ of norm $2$.		
For a set $X$, we let $\Sym(X)$ denote the symmetric group on $X$.
For a positive integer $n$, let $S_{n}$ denote the symmetric group $\Sym(\{1,\ldots,n\})$ of degree $n$.
The symmetric group $S_{16}$ of degree $16$ acts on $\A_{15}^{+}$
such that,
for $\bx \in \A_{15}^{+}$, $\sigma \in S_n$ and $i \in \{1,\ldots,16\}$,
the $\sigma(i)$-th entry of $\sigma(\bx)$ is defined by the $i$-th entry of $\bx$.
In fact, $\Aut(\A_{15}^+)=\langle S_{16},-1 \rangle$ holds.

In this section we discuss properties of the lattice $\A_{15}^+$ and its non $2$-integrable sublattices.
As claimed in Lemma~\ref{lem:12}, every non $2$-integrable lattice of rank $12$ is contained in $\A_{15}^+$.
Lemma~\ref{lem:thm} is the first statement of Theorem~\ref{thm:main},
which asserts our newly found lattices are $\langle \ba,\bb,\bc \rangle^{\perp}$ and $\langle \ba,\bb,\bc' \rangle^{\perp}$.
In Lemma~\ref{lem:M12}, we explain properties of these lattices.

\begin{lem}	\label{lem:thm}
	There are precisely two sublattices in $\A_{15}^+$ up to $\Aut(\A_{15}^+)$ with Gram matrix~(\ref{thm:main:1}).
	Furthermore, they are $\langle \ba,\bb,\bc \rangle$ and $\langle \ba,\bb,\bc' \rangle$ in $\A_{15}^+$.
\end{lem}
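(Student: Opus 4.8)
The plan is to enumerate, up to the action of $\Aut(\A_{15}^+) = \langle S_{16}, -1\rangle$, all triples of vectors in $\A_{15}^+$ realizing the Gram matrix~\eqref{thm:main:1}, and to verify that exactly two orbits arise, represented by $\langle \ba, \bb, \bc\rangle$ and $\langle \ba, \bb, \bc'\rangle$. The Gram matrix~\eqref{thm:main:1} prescribes three vectors $\bx, \by, \bz$ of norm $3$ with $(\bx,\by) = 2$ and $(\bx,\bz) = (\by,\bz) = 0$. First I would classify the norm-$3$ vectors of $\A_{15}^+$. Since the minimal norm in the nonzero cosets of $\A_{15}$ in $\A_{15}^+$ is $3$, every norm-$3$ vector lies either in $\A_{15}$ itself or in one of these glue cosets; in the latter case it has the shape $(\pm 3,\dots)/4$ with entries $\equiv \pm 3 \pmod 4$, governed by the glue vector $[4]$, and $S_{16}$ acts transitively on suitable coordinate patterns. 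Thus I would first pin down a normal form for a single norm-$3$ vector under $\langle S_{16}, -1\rangle$.

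Next I would fix $\bx$ in normal form and classify the pairs $(\bx, \by)$ with $(\bx,\by) = 2$ and $\by$ of norm $3$, up to the stabilizer of $\bx$ in $\Aut(\A_{15}^+)$. Because the $\pm 3$-type vectors have a rigid coordinate signature, the condition $(\bx,\by)=2$ sharply limits how the ``heavy'' coordinates of $\by$ can overlap those of $\bx$; I would partition into cases according to the size of this overlap and use the residual symmetry (the subgroup of $S_{16}$ fixing $\bx$) to reduce each case to a canonical $\by$. The pair $\ba, \bb$ in the statement is one such canonical choice, and I expect the overlap analysis to show this pair is essentially forced up to symmetry. Finally, with $\bx = \ba$ and $\by = \bb$ fixed, I would classify the norm-$3$ vectors $\bz$ orthogonal to both $\ba$ and $\bb$, again modulo the joint stabilizer $\Stab(\ba) \cap \Stab(\bb)$ in $\Aut(\A_{15}^+)$. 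The two orthogonality constraints cut the coordinate positions of $\bz$ into blocks on which the residual symmetric group acts, and counting the orbits of admissible patterns should yield exactly the two classes represented by $\bc$ and $\bc'$.

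The main obstacle will be the orbit bookkeeping in this last step: computing the stabilizer $\Stab(\ba) \cap \Stab(\bb)$ precisely (it is a product of symmetric groups on the coordinate blocks determined by the sign patterns of $\ba$ and $\bb$, together with the $-1$ action) and then verifying that the admissible $\bz$ fall into exactly two orbits rather than one or three. I would organize this by writing out the block decomposition of $\{1,\dots,16\}$ induced by the entrywise types of $\ba$ and $\bb$, listing the constraints that $(\ba,\bz)=(\bb,\bz)=0$ and $(\bz,\bz)=3$ impose on $\bz$ restricted to each block, and then enumerating solutions up to the residual symmetry. Throughout, I would need to confirm that $\bc$ and $\bc'$ genuinely lie in distinct orbits; the cleanest way is to exhibit an $\Aut(\A_{15}^+)$-invariant that separates them (for instance, the isometry type of $\langle \ba,\bb,\bc\rangle^\perp$ versus $\langle \ba,\bb,\bc'\rangle^\perp$, whose non-isometry is asserted in Theorem~\ref{thm:main}), so that no symmetry can identify the two configurations.
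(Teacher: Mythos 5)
Your proposal follows essentially the same route as the paper: the paper shows the norm-$3$ vectors are exactly $\pm\bt_I = \pm\bigl((1/4)\e - \be_{i_1}-\be_{i_2}-\be_{i_3}-\be_{i_4}\bigr)$ for $4$-subsets $I$ (the cosets $\A_{15}$ and $2[4]+\A_{15}$ being even), then uses $(\bt_I,\bt_J) = -1+|I\cap J|$ to normalize $\bx=\ba$ and $\by=\bb$ and to reduce $\bz$ to $\bc$ or $\bc'$ by precisely the stabilizer/overlap analysis you describe. One caution on your final step: to separate the two orbits, cite the kissing-number computation for the orthogonal complements in Proposition~\ref{prop:main_min} (which is independent of this lemma) rather than the non-isometry ``asserted in Theorem~\ref{thm:main}'', since Theorem~\ref{thm:main} is itself proved using this lemma.
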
	
\begin{proof}
	Let $T$ be the set of elements of norm $3$ in $\A_{15}^+$.		
	For pairwise distinct integers $i_1,i_2,i_3$ and $i_4 \in \{1,\ldots,16\}$,
	we let
	\begin{align*}
		\bt_{i_1,i_2,i_3,i_4} = \bt_{\{i_1,i_2,i_3,i_4\}}
		:=
		(1/4) \e
		- \be_{i_1} -\be_{i_2}-\be_{i_3}-\be_{i_4} \in \A_{15}^+,
	\end{align*}
	where $\e$ denotes the all one vector in $\Z^{16}$.
	For example the vector $[4]$ defined in Definition~\ref{dfn:A15p} is $\bt_{13,14,15,16}$.		
	First,
	we show that
	\begin{align}	\label{lem:thm:1}
		T = \{ \pm \bt_I \mid I \subseteq \{1,\ldots,16\} \text{ and } |I| =4 \}.
	\end{align}		
	As the representatives of cosets of $\A_{15}$ in $\A_{15}^+$ are $0$, $\pm [4]$ and $2[4]$,
	and the norm of every element in $\A_{15}$ and $2[4]+\A_{15}$ is even, every element in $T$ must belong to $\pm[4] +\A_{15}$. 
	Let $\by=(y_1,\ldots,y_{16}) \in \A_{15}$, and suppose $[4] + \by \in T$.
	Then we obtain two conditions
	\begin{align*}
		\sum_{i=1}^{12} (4y_i+1) + \sum_{j=13}^{16}(4y_j-3) = 0
		\text{\quad and \quad}
		\sum_{i=1}^{12} (4y_i+1)^2 + \sum_{j=13}^{16}(4y_j-3)^2 = 3 \cdot 4^2 = 48.
	\end{align*}
	By the second condition, the odd integers $4y_i+1$ and $4y_j-3$ clearly belong to $\{-3,1,5\}$ for all $i$ and $j$.
	This together with the first condition implies that they belong to $\{-3,1\}$.
	Thus there exists $I \subseteq \{1,\ldots,16\}$ with $|I|=4$
	such that $[4] + \by = \bt_I$.
	This implies
	\begin{align*}
		\left( [4]+\A_{15}^+ \right) \cap T \subseteq \{ \bt_I \mid I \subseteq \{1,\ldots,16\} \text{ and } |I| =4 \}.
	\end{align*}
    As $-[4]+\A_{15}^+=-([4]+\A_{15}^+)$, it comes with 
    $$\left(-[4]+\A_{15}^+ \right) \cap T \subseteq \{-\bt_I \mid I \subseteq \{1,\ldots,16\} \text{ and } |I| =4 \}$$
	and the equality~(\ref{lem:thm:1}) holds.
	Next,
	we classify three elements $\bx,\by$ and $\bz$ of norm $3$ in $\A_{15}^+$ up to $\Aut(\A_{15}^+)$ such that the Gram matrix with respect to them is (\ref{thm:main:1}).
	We can let $\bx := \bt_{1,2,3,4}=\ba$.
	For subsets $I$ and $J$ of cardinality $4$ in $\{1,\ldots,16\}$,
	we have
	$
		(\bt_I,\bt_J) = -1 + |I \cap J|.
	$
	Hence, we let $\by:=\bt_{1,2,3,5}=\bb$ to satisfy $(\bx,\by)=2$.
	Similarly, to satisfy $(\bx,\bz)=(\by,\bz)=0$,
	we let $\bz:=\bt_{1,6,7,8}=\bc$ or $\bz := \bt_{4,5,6,7}=\bc'$.
	The desired conclusion holds.
\end{proof}

\begin{lem}	\label{lem:M12}
	Let $L$ be a unimodular lattice.
	Let $M$ be a sublattice of $L$ with Gram matrix~\eqref{thm:main:1}.
	Let $\pr$ denote the orthogonal projection from $L$ to $M^{\perp}\otimes\Q $.
	Then, the following holds:
	\begin{enumerate}
		\item	$M$ is a primitive sublattice in $L$.	\label{b1}
		\item	$\det(M) = \det(M^{\perp})= 15$.	\label{b2}
		\item	The minimal norms of the representatives for the nonzero cosets of $M$ in $M^*$
			are $1/3$, $2/5$, $3/5$, $11/15$ and $14/15$.	\label{b3}
		\item	\label{b4}
			Suppose that the minimal norm of $L$ is at least $2$.
			Then,
			the minimum norm of $(M^\perp)^*$ is at least $16/15$.
			Furthermore,
			for every nonzero element $\bw \in (M^{\perp})^*$ of norm at most $2$,	
			there exists $\bw' \in L$ of norm $2$
			such that $\bw = \pr(\bw')$.
	\end{enumerate}
\end{lem}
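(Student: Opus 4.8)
The plan is to dispatch \eqref{b1}--\eqref{b3} by short structural computations and to spend the real effort on \eqref{b4}. For \eqref{b1} and \eqref{b2}, the matrix~\eqref{thm:main:1} has determinant $15$, which is square-free, so $M$ is primitive by Lemma~\ref{lem:pri3}; this proves \eqref{b1} and gives $\det(M)=15$. Since $L$ is unimodular and $M$ is primitive, Lemma~\ref{lem:pri4} then yields $\det(M^\perp)=\det(M)=15$, proving \eqref{b2}.

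For \eqref{b3}, I would use the block form of~\eqref{thm:main:1} to write $M=M_1\perp M_2$ with $M_1\cong\BB{3}{2}{2}{3}$ (of determinant $5$) and $M_2\cong(3)$. As $\gcd(5,3)=1$, we get $M^*/M\cong(M_1^*/M_1)\times(M_2^*/M_2)\cong\Z/5\Z\times\Z/3\Z$, and orthogonality of $M_1\perp M_2$ means the minimal norm of a coset $(\bu_1+M_1,\bu_2+M_2)$ is the sum of the minimal norms of its two factors. Inverting the two Gram blocks and enumerating shows that the two nonzero cosets of $M_2$ each have minimal norm $1/3$, while the four nonzero cosets of $M_1$ have minimal norms $2/5,2/5,3/5,3/5$. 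Summing over the $14$ nonzero combinations then produces precisely $1/3$, $2/5$, $3/5$, $2/5+1/3=11/15$ and $3/5+1/3=14/15$.

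The substance is \eqref{b4}, where the engine is Lemma~\ref{lem:pri2}, giving $(M^\perp)^*\perp M^*=L+M^*$. I would extend $\pr$ to the ambient space $V:=L\otimes\Q=(M\otimes\Q)\perp(M^\perp\otimes\Q)$ and first show that every $\bw\in(M^\perp)^*$ has a preimage under $\pr$ in $L$: since $(M^\perp)^*\subseteq(M^\perp)^*\perp M^*=L+M^*$, write $\bw=\bl+\bm$ with $\bl\in L$, $\bm\in M^*$, and apply $\pr$, which annihilates $M\otimes\Q\supseteq M^*$, to get $\pr(\bl)=\bw$. The full set of such preimages is a coset $\bl_0+M$, because any two differ by an element of $\ker\pr\cap L=(M\otimes\Q)\cap L=M$, using primitivity from \eqref{b1}. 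Writing a preimage $\bl=\bw+(\bl-\bw)$ orthogonally, one checks $\bl-\bw\in M^*$ (indeed $(\bl-\bw,\bm')=(\bl,\bm')\in\Z$ for all $\bm'\in M$), so the minimal norm over all preimages equals $(\bw,\bw)+\mu(c)$, where $c:=(\bl_0-\bw)+M\in M^*/M$ and $\mu(c)$ denotes its minimal norm from \eqref{b3}.

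Both halves of \eqref{b4} then follow from the key numerical consequence of \eqref{b3}, namely $\mu(c)<1$ for every coset $c$, with maximum $14/15$. For the lower bound on the minimum of $(M^\perp)^*$, a nonzero $\bw$ has a nonzero minimal preimage $\bl\in L$, so $(\bl,\bl)\ge2$ and hence $(\bw,\bw)=(\bl,\bl)-\mu(c)\ge2-14/15=16/15$. For the second claim, if $\bw\ne0$ and $(\bw,\bw)\le2$, then the minimal preimage satisfies $2\le(\bl,\bl)=(\bw,\bw)+\mu(c)<2+1=3$; since $L$ is integral, $(\bl,\bl)\in\Z$, forcing $(\bl,\bl)=2$, and $\bw':=\bl$ is the required norm-$2$ vector with $\pr(\bw')=\bw$. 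The step I expect to be most delicate is the bookkeeping in this last part: confirming $\bl-\bw\in M^*$ so that the minimal preimage norm is genuinely $(\bw,\bw)+\mu(c)$ for the correct coset. Once that is secured, the conclusion is forced by the gap $\mu(c)<1$ together with integrality of norms in $L$, which squeeze the minimal preimage norm into $[2,3)$ and pin it to $2$.
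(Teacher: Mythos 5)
Your proof is correct and takes essentially the same route as the paper: \eqref{b1} and \eqref{b2} via Lemma~\ref{lem:pri3} and Lemma~\ref{lem:pri4}, \eqref{b3} by enumerating coset representatives over the orthogonal splitting of $M$, and \eqref{b4} driven by Lemma~\ref{lem:pri2} together with the bound $14/15 < 1$, the minimum norm $2$ of $L$, and integrality pinning the norm of the preimage to $2$ in the interval $[2,3)$. Your only reorganization is in \eqref{b4}, where you minimize over the preimage coset of $\bw$ modulo $M$, whereas the paper directly picks $\bw \in \bu_i + L$ from the disjoint-union decomposition and sets $\bw' := \bw - \bu_i$; these are the same computation viewed from opposite sides.
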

\begin{proof}
	\eqref{b1} and \eqref{b2} follow from Lemma~\ref{lem:pri3} and Lemma~\ref{lem:pri4}, respectively.
	\eqref{b3} Let $\bv_1,\bv_2,\bv_3$ be the elements of $M$ with \eqref{thm:main:1} as their Gram matrix. Then $M=\langle\bv_1,\bv_2 \rangle\perp \langle\bv_3\rangle$ and $M^*=\langle\bv_1,\bv_2\rangle^*\perp\langle\bv_3\rangle^*$. The representatives for the nonzero cosets of $M$ in $M^*$
	are \[
	\pm(\bv_1+\bv_2)/5, \pm(3\bv_1-2\bv_2)/5, \pm\bv_3/3, \pm(\bv_1+\bv_2)/5\pm\bv_3/3, \pm(3\bv_1-2\bv_2)/5\pm\bv_3/3,
	\]
	and their norms are $2/5,3/5,1/3,11/15$ and $14/15$, respectively. Next we show \eqref{b4}. Take any element $\bw \in (M^{\perp})^*$ of norm at most $2$.
	Let $\bu_0 , \bu_1 , \ldots, \bu_{14}$ be the representatives for the nonzero cosets of $M$ in $M^*$ which are with minimal norm.
	By applying Lemma~\ref{lem:pri2} to $L$ and $M$,
	we have
	\begin{align*}
		(M^{\perp})^* \perp M^* = L + M^* = \bigsqcup_{i=0}^{14} \left( \bu_i + L \right) .
	\end{align*}
	Hence,
	$
		\bw  \in \bu_i + L
	$
	for some integer $i$.
	Then,
	we have $\bw-\bu_i \in L$ and
	\begin{align*}
			0 < |\bw-\bu_i|^2 = |\bw|^2+|\bu_i|^2 \leq 2 + 14/15 < 3.
	\end{align*}
	Since the minimum norm of $L$ is at least $2$,
	$|\bw-\bu_i|^2 = 2$ follows.
	Letting $\bw':=\bw-\bu_i$, we have $\bw=\pr(\bw')$ and the norm of $\bw'$ is $2$.		
	Moreover, we have
	$
		|\bw|^2 = 2-|\bu_i|^2 \geq 2-14/15 = 16/15.
	$
	Hence, the minimum norm of $(M^\perp)^*$ is at least $16/15$.
\end{proof}		

For an element $\bx=(x_1,\ldots,x_n)$ in $\R^n$,
the \emph{support} of $\bx$, denoted by $\supp	\bx$, is the set of $i \in \{1,\ldots, n\}$ with $x_i \neq 0$.
We fix a partition $\pi = \{X_i\}_{i=1}^q$ of $\{1,\ldots,16\}$.
Then a group $A_\pi$ is defined as the subgroup of $\Aut(\A_{15}^+)$ generated by $-1$ and $\Sym(X_i)$ for all $i \in \{1,\ldots,q\}$.
For an element $\bx \in \sR = \{ \by \in \A_{15}^+ \mid (\by,\by) = 2\}$, the \emph{type} of $\bx$ for $\pi$ is defined by
\begin{align*}
	\type_\pi(\bx) := \{ i \in \{1,\ldots,q\} \mid \supp \bx \cap X_i \neq \emptyset \}.
\end{align*}
In addition, for any $\bx$ and $\by$ in $\sR$, we let
\begin{align*}
	\type_\pi(\bx,\by) := \{ i \in \{1,\ldots,q\} \mid \supp \bx \cap \supp \by \cap X_i \neq \emptyset \}.
\end{align*}
Then we have the following lemma.

\begin{lem}	\label{lem:type_inner}
	Let $\pi$ be a partition of $\{1,\ldots,16\}$.
	Let $N$ be a sublattice of $\A_{15}^{+}$ invariant under $A_\pi$.
	Let $\pr$ denote the orthogonal projection from $\A_{15}^{+}$ to $N\otimes\Q$.
	For two elements $\bu$ and $\bv$ in $\sR$,
	the value $|(\pr(\bu),\pr(\bv))|$ depends only on
	\begin{enumerate}
		\item		\label{lem:type_inner:1}
			$\type_\pi(\bu)$ and $\type_\pi(\bv)$ if $|(\bu,\bv)|=2$, and
		\item		\label{lem:type_inner:2}
			$\type_\pi(\bu)$, $\type_\pi(\bv)$ and $\type_\pi(\bu,\bv)$ if $|(\bu,\bv)| \leq 1$.
	\end{enumerate}
\end{lem}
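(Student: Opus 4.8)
The plan is to pass from the projection $\pr$ to its Gram matrix and exploit that it must commute with the block symmetries, which forces a very rigid ``block-constant'' shape. First I would record the explicit shape of $\sR$: it consists exactly of the roots $\be_i - \be_j$ with $i \neq j$ (the norm-$2$ vectors of $\A_{15}$). For $\bu = \be_p - \be_q$ and $\bv = \be_r - \be_s$ one computes $(\bu,\bv) = \pm 2$ precisely when $\bv = \pm \bu$, i.e.\ $\supp\bu = \supp\bv$; $(\bu,\bv) = \pm 1$ precisely when the supports meet in one index; and $(\bu,\bv) = 0$ precisely when the supports are disjoint. Thus $|(\bu,\bv)| = 2$ is exactly the ``same support'' case of item~\eqref{lem:type_inner:1}, and $|(\bu,\bv)| \leq 1$ is exactly the case $|\supp\bu \cap \supp\bv| \leq 1$ of item~\eqref{lem:type_inner:2}.

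Next I would set up the commutation. Since $N$ is invariant under $A_\pi$ and every element of $A_\pi$ acts on $\R^{16}$ as an orthogonal transformation (coordinate permutations inside the blocks $X_i$, together with $-1$), the subspace $N \otimes \R$ is $A_\pi$-invariant, so the orthogonal projection $\pr$ onto it commutes with every $g \in A_\pi$. Writing $P$ for the symmetric matrix of $\pr$, so that $(\pr(\bx),\pr(\by)) = \bx^\top P \by$, this commutation reads $g^\top P g = P$; in particular $P$ is invariant under the group $\prod_i \Sym(X_i)$ of block-wise permutations. The key step is then the standard description of this commutant: because $\Sym(X_a)$ is transitive on ordered pairs of distinct indices of $X_a$, and $\Sym(X_a) \times \Sym(X_b)$ is transitive on $X_a \times X_b$ for $a \neq b$, the entry $P_{kl}$ depends only on the blocks of $k$ and $l$ and on whether $k = l$. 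Explicitly there are constants with $P_{kk} = \alpha_a$ for $k \in X_a$, $P_{kl} = \gamma_a$ for distinct $k,l \in X_a$, and $P_{kl} = \beta_{ab}$ for $k \in X_a$, $l \in X_b$ with $a \neq b$.

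It remains to read off the dependence. For $\bu = \be_p - \be_q$ and $\bv = \be_r - \be_s$,
\[
	(\pr(\bu),\pr(\bv)) = P_{pr} - P_{ps} - P_{qr} + P_{qs},
\]
and each of the four entries is one of the constants $\alpha,\gamma,\beta$ determined solely by the blocks of $p,q,r,s$ and by the coincidences among them. These data are precisely what the types encode: $\type_\pi(\bu)$ and $\type_\pi(\bv)$ fix the blocks of $\{p,q\}$ and $\{r,s\}$, and in the case $|(\bu,\bv)| \leq 1$ the set $\type_\pi(\bu,\bv)$ records the block of the unique common index if there is one (and its emptiness signals disjoint supports); in the case $|(\bu,\bv)| = 2$ one has $\bv = \pm\bu$, so $\type_\pi(\bu) = \type_\pi(\bv)$ already fixes the value. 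Finally, replacing $\bu$ by $-\bu$ or $\bv$ by $-\bv$ leaves all the types unchanged but only flips the sign of $(\pr(\bu),\pr(\bv))$, so the absolute value $|(\pr(\bu),\pr(\bv))|$ is a well-defined function of the listed type data, which is the assertion.

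The main obstacle I anticipate is purely bookkeeping: verifying, in each configuration of blocks and coincidences, that the four-term alternating sum is forced by the types, and checking that the labelling ambiguities (which endpoint of $\bv$ lies in a shared block, and the sign of a single common index) are absorbed by the outer absolute value. This is a short finite case check; the conceptual content---commutation of $\pr$ with $A_\pi$ and the resulting block-constant form of $P$---is what does the real work.
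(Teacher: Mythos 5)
Your proof is correct, but it takes a genuinely different route from the paper's. The paper argues by transitivity: given two pairs $(\bu,\bv)$ and $(\bu',\bv')$ with matching type data, it constructs an explicit $\sigma \in A_\pi$ with $\bu' = \sigma(\bu)$ and $\bv' = \pm\sigma(\bv)$ (e.g.\ a product of transpositions such as $(j,j')(k,k')$ in case (1)), and then concludes via $|(\pr(\bu),\pr(\bv))| = |(\pr(\sigma(\bu)),\pr(\sigma(\bv)))|$, which uses exactly the same equivariance $\pr \circ \sigma = \sigma \circ \pr$ that you establish. You instead dualize: rather than moving pairs of roots into each other, you let the group act on the projection matrix $P$ and invoke the commutant description ($g^\top P g = P$ for all block permutations forces $P_{kk} = \alpha_a$, $P_{kl} = \gamma_a$ within a block, $P_{kl} = \beta_{ab}$ across blocks), then evaluate $(\pr(\bu),\pr(\bv)) = P_{pr} - P_{ps} - P_{qr} + P_{qs}$ and check that the alternating sum is pinned down, up to the sign flips coming from $\bu \mapsto -\bu$ or $\bv \mapsto -\bv$, by the type data; your observation that swapping the endpoints of one root negates the sum is precisely what makes the finite case check go through (e.g.\ in the one-common-index case one may normalize to $p = r$, after which the sum is a function of the block of the common index and the blocks of the two free endpoints, all encoded by $\type_\pi(\bu)$, $\type_\pi(\bv)$, $\type_\pi(\bu,\bv)$). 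Both proofs rest on the same two pillars — the description $\sR = \{\be_i - \be_j \mid i \neq j\}$ and $A_\pi$-equivariance of $\pr$ — so neither is more general, but the trade-off is real: the paper's orbit argument is shorter and sidesteps your case analysis by packaging it into the existence of $\sigma$ (which it, too, leaves partly "easily verified" in case (2)), while your commutant computation buys explicit closed formulas for $|(\pr(\bu),\pr(\bv))|$ in terms of the constants $\alpha_a, \gamma_a, \beta_{ab}$, which is exactly the kind of data the paper later computes by hand in Propositions \ref{prop:N1} and \ref{prop:N2}. There is no gap in your argument, only the routine verification you correctly flag as bookkeeping.
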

\begin{proof}
	Write $\bu = \be_j - \be_k$ and $\bv = \be_l -\be_m$.
	Take arbitrary two elements $\bu' = \be_{j'}-\be_{k'}$ and $\bv' =  \be_{l'}-\be_{m'}$ in $\sR$
	which satisfy $|(\bu,\bv)|=|(\bu',\bv')|$,
	$\type_\pi(\bu) = \type_\pi(\bu')$ and $\type_\pi(\bv) = \type_\pi(\bv')$.
	In addition, we suppose that $\type_\pi(\bu,\bv) = \type_\pi(\bu',\bv')$ if $|(\bu,\bv)| \leq 1$.
	We show that there exists $\sigma \in A_{\pi}$ so that
	\begin{align}	\label{lem:type_inner:3}
		\bu' = \sigma(\bu) \text{ and } \bv' = \pm \sigma(\bv).
	\end{align}
	Consider the case that \eqref{lem:type_inner:1} is satisfied.
	Without loss of generality we may assume that $\{j,j'\}\subseteq C$ (resp. $\{k,k'\}\subseteq D$) for some $C \in \pi$ (resp. $D \in \pi$), as $\type_\pi(\bu) = \type_\pi(\bu')$.
	Let $\sigma = (j,j')(k,k')$. Then $\sigma \in A_\pi$ and $\bu = \pm \sigma(\bu')$. Since $|(\bu,\bv)|=|(\bu',\bv')|=2$, it is immediately to see $\supp \bu=\supp \bv$ and $\supp \bu'=\supp \bv'$, and thus $\bv = \pm \sigma(\bv')$.
	In the case that \eqref{lem:type_inner:2} is satisfied, the existence of $\sigma$ is also easily verified.
	
	We consider the automorphism group $\Aut(\A_{15}^+)$ as acting on $\A_{15}^+\otimes \R $.
	For $\sigma \in A_{\pi}$ satisfying equality~\eqref{lem:type_inner:3}, we have
	\begin{align*}
		|(\pr(\bu),\pr(\bv))| = |(\sigma(\pr(\bu)),\sigma(\pr(\bv)))| = |(\pr \circ \sigma(\bu),\pr \circ \sigma(\bv))| = |(\pr(\bu'),\pr(\bv'))|.
	\end{align*}
	This is the desired conclusion.
\end{proof}

The following lemma gives a necessary condition for a lattice to be $2$-integrable.
In Section~\ref{sec:proof},
it turns out that we can apply Lemma~\ref{lem:m}
to our lattices $\langle \ba,\bb,\bc \rangle^\perp$ and $\langle \ba,\bb,\bc' \rangle^\perp$.
As a result, we conclude that they are non $2$-integrable.	

\begin{lem}	\label{lem:m}
	Let $X \subseteq \{1,\ldots,16\}$ with $|X| \geq 3$.
	Let $N$ be a sublattice of $\A_{15}^+$, and $\pr$ the orthogonal projection from $\A_{15}^+$ to $N\otimes\Q$.
	Suppose that the following conditions are satisfied:
	\begin{enumerate}
		\item	\label{lem:m:1}
			The minimum norm of $N^*$ is greater than $1$.
		\item	\label{lem:m:2}
			$\pr(\sR)$ contains the nonzero elements in $N^*$ of norm at most $2$.
		\item	\label{lem:m:3}
			$N$ contains the elements in $\sR$ whose support is contained in $X$.
	\end{enumerate}
	If $N$ is $2$-integrable,
	then
	there exist $\bu$ and $\bv$ in $\sR$ with $\bu \neq \pm \bv$ such that the following hold.
	\begin{enumerate}
		\setcounter{enumi}{3}
		\item	\label{lem:m:4}
			$\supp \bu \cap \supp \bv \cap X \neq \emptyset$, 
			and $(2-(\pr(\bu),\pr(\bu))(2-(\pr(\bv),\pr(\bv)) \geq 1/4$.
		\item	\label{lem:m:5}
			$2I-G$ is positive semi-definite, where $G$ is the Gram matrix with respect to $\pr(\bu)$ and $\pr(\bv)$.
	\end{enumerate}
\end{lem}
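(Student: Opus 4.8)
The plan is to translate the $2$-integrability of $N$ into the existence of a eutactic star in $N^*$ via Theorem~\ref{thm:iff_eut}, and then to extract the required pair $\bu,\bv$ from this star by a counting argument on the support set $X$.

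First I would invoke Theorem~\ref{thm:iff_eut} to obtain an $n$-dimensional eutactic star $\bs_1,\ldots,\bs_m$ of scale $2$ contained in $N^*$, where $n=\rank N$; since the eutactic identity of Lemma~\ref{lem:eut} is unaffected by dropping zero vectors, I may assume every $\bs_i$ is nonzero. Each $\bs_i$ is the image under an orthogonal projection of a vector of norm $2$, so $(\bs_i,\bs_i)\le 2$, while condition~\eqref{lem:m:1} gives $(\bs_i,\bs_i)>1$. Thus each $\bs_i$ is a nonzero element of $N^*$ of norm at most $2$, and condition~\eqref{lem:m:2} lets me write $\bs_i=\pr(\bv_i)$ with $\bv_i\in\sR$. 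As all the norms $(\bs_i,\bs_i)$ exceed $1$, Lemma~\ref{lem:v}\,\eqref{lem:v:1} guarantees that $\bv_1,-\bv_1,\ldots,\bv_m,-\bv_m$ are pairwise distinct; this distinctness is precisely what will make the later pigeonhole work. I record the reduction that closes the argument: once I have found $\bs_i=\pr(\bu)$ and $\bs_j=\pr(\bv)$ in the star with $(\bu,\bv)=\pm1$ and $\supp\bu\cap\supp\bv\cap X\neq\emptyset$, conditions~\eqref{lem:m:4}--\eqref{lem:m:5} follow automatically. Indeed $2I-G$ is a principal submatrix of $2I-\Gamma$, where $\Gamma$ is the Gram matrix of the full star (positive semidefinite for a scale-$2$ star), so $2I-G$ is positive semidefinite; the matrix $A-G$ for $A=\BB{2}{\pm1}{\pm1}{2}$ is positive semidefinite because orthogonal projection never increases Gram matrices; and then Lemma~\ref{lem:gram}\,\eqref{c2} yields $(2-(\pr\bu,\pr\bu))(2-(\pr\bv,\pr\bv))\ge(\pm1/2)^2=1/4$.

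Next I would exploit condition~\eqref{lem:m:3}. For any root $\bw\in\sR$ with $\supp\bw\subseteq X$ we have $\bw\in N$, hence $\pr\bw=\bw$ and $(\bw,\bs_i)=(\bw,\pr\bv_i)=(\bw,\bv_i)\in\{0,\pm1,\pm2\}$, and Lemma~\ref{lem:eut} gives $\sum_i(\bw,\bv_i)^2=2(\bw,\bw)=4$. Writing $P(\bw)=\{i:(\bw,\bv_i)=\pm2\}$ (equivalently $\bv_i=\pm\bw$) and $Q(\bw)=\{i:(\bw,\bv_i)=\pm1\}$, the distinctness forces $4\,|P(\bw)|+|Q(\bw)|=4$, so $(|P(\bw)|,|Q(\bw)|)\in\{(1,0),(0,4)\}$.

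The crux is to produce a root $\bw$ (with support in $X$) of the second type, and then to find two distinct non-opposite star roots sharing a support vertex. For the former I argue by contradiction: if every such $\bw$ were of type $(1,0)$, then choosing distinct $a,b,c\in X$ (possible since $|X|\ge3$), type $(1,0)$ for $\be_a-\be_b$ places $\pm(\be_a-\be_b)$ in the star, yet this vector has inner product $\mp1$ with $\be_b-\be_c$ and is not $\pm(\be_b-\be_c)$, contradicting type $(1,0)$ for $\be_b-\be_c$. Hence some $\bw=\be_a-\be_b$ has $|Q(\bw)|=4$, giving four pairwise distinct, pairwise non-opposite star roots, each sharing exactly one of $a,b$ with $\bw$. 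A pigeonhole on the two vertices of $\{a,b\}\subseteq X$ produces two of them, $\bu$ and $\bv$, meeting $X$ in the same vertex; being distinct and non-opposite they share only that vertex, so $(\bu,\bv)=\pm1$ and $\supp\bu\cap\supp\bv\cap X\neq\emptyset$. Feeding $\bu,\bv$ into the reduction of the second paragraph completes the proof. I expect the delicate points to be the norm bound $(\bs_i,\bs_i)>1$ (which both situates the $\bs_i$ within the range covered by condition~\eqref{lem:m:2} and unlocks the distinctness from Lemma~\ref{lem:v}\,\eqref{lem:v:1}) and the pigeonhole step, which would fail without that distinctness.
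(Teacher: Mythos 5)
Your proof is correct, and it matches the paper's skeleton at both ends: the opening (Theorem~\ref{thm:iff_eut}, conditions~(\ref{lem:m:1})--(\ref{lem:m:2}) to lift the star vectors $\bs_i$ to roots $\bv_i\in\sR$, and Lemma~\ref{lem:v} for pairwise distinctness of $\pm\bv_1,\ldots,\pm\bv_m$) and the closing reduction (a pair with $|(\bu,\bv)|=1$ and common support in $X$ yields (\ref{lem:m:4})--(\ref{lem:m:5}); your derivation via Lemma~\ref{lem:gram} with $\delta=\pm1$ is exactly the proof of Lemma~\ref{lem:v}~(\ref{lem:v:2}), which the paper cites directly). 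The combinatorial core, however, is genuinely different. The paper argues by contradiction: assuming $\supp\bv_i\cap\supp\bv_j\cap X=\emptyset$ for all $i\neq j$, it chooses a pair $\{i_1,i_2\}\in\binom{X}{2}$ that is the support of no $\bv_i$ (here $|X|\geq3$ enters), sets $\bw:=\be_{i_1}-\be_{i_2}\in N$, verifies $\bs_i\notin\Q\bw$ by strict Cauchy--Schwarz, and applies the counting inequality of Lemma~\ref{lem:hoge} to force $4\leq|\{i\mid(\bw,\bv_i)\neq0\}|\leq2$. You never touch Lemma~\ref{lem:hoge}: instead you use the exact eutactic identity $\sum_i(\bw,\bv_i)^2=4$ of Lemma~\ref{lem:eut} at every root $\bw$ supported in $X$, extract the dichotomy $(|P(\bw)|,|Q(\bw)|)\in\{(1,0),(0,4)\}$, rule out the all-$(1,0)$ scenario using three points of $X$, and pigeonhole the four unit-inner-product mates of a type-$(0,4)$ root on its two endpoints. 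Your route is constructive where the paper's is indirect --- it pinpoints a root $\bw$ with exactly four star neighbours and produces $\bu,\bv$ explicitly --- and it dispenses with the auxiliary lemma and the $\bs_i\notin\Q\bw$ check; the paper's version is shorter on the page because Lemma~\ref{lem:hoge} packages the counting, and its inequality form would tolerate perturbations where your exact count would not. One small slip in wording: it is the eutactic identity, not distinctness, that forces $4|P(\bw)|+|Q(\bw)|=4$, and $|P(\bw)|\leq1$ already follows from that equation by arithmetic; distinctness of the $\pm\bv_i$ is genuinely needed only where you use it --- to conclude $x\neq y$, hence $(\bu,\bv)=\pm1$, in the pigeonhole step.
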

\begin{proof}
	Since $N$ is $2$-integrable,
	by applying Theorem~\ref{thm:iff_eut} to $N$,
	there exist nonzero elements $\bs_1,\ldots, \bs_m$ in $N^*$ which form a eutactic star of scale $2$.
	The norms of $\bs_1,\ldots,\bs_m$ are greater than $1$ by the condition~(\ref{lem:m:1}).
	By the condition~(\ref{lem:m:2}),
	we can find $\bv_1,\ldots,\bv_m$ in $\sR$ such that $\bs_i = \pr(\bv_i)$ for every $i$.
	Lemma~\ref{lem:v}~\eqref{lem:v:2} together with the condition~(\ref{lem:m:1}) implies that
	$\bv_1,-\bv_1,\ldots,\bv_m,-\bv_m$ are pairwise distinct.
	
	We show that
	there exist two distinct $i$ and $j$ in $\{1,\ldots ,m\}$
	such that $\bu:=\bv_i$ and $\bv:=\bv_j$ satisfy the conditions~(\ref{lem:m:4}) and (\ref{lem:m:5})
	as follows:
	First,
	since $\bs_1,\ldots,\bs_m$ form a eutactic star of scale $2$, the condition~(\ref{lem:m:5}) is satisfied for all $\bu:=\bv_i$ and $\bv:=\bv_j$ with $i \neq j$.	
	
	Next, we suppose that
	\begin{align*}
		\supp \bv_i \cap \supp \bv_j \cap X = \emptyset
	\end{align*}
	for all $i$ and $j$ with $i \neq j$.
	Since $|X| \geq 3$,
	there exist two distinct $i_1$ and $i_2$ such that
	\begin{align*}
		\{ i_1,i_2\} \in \binom{X}{2} \setminus \{ \supp \bv_i \mid i = 1,\ldots,m \}.
	\end{align*}
	Let $\bw := \be_{i_1}-\be_{i_2}$.
	Then $\bw \in N$ holds by the condition~(\ref{lem:m:3}).
	For each $i$,
	since
	\begin{align*}
		(\bw,\bw)(\bs_i,\bs_i)>2\text{ and } (\bw,\bs_i)^2 = (\bw,\bv_i)^2 \in \{0,1\},
	\end{align*}
	we have $(\bw,\bw)(\bs_i,\bs_i) > (\bw,\bs_i)^2$ and thus $\bs_i \in N^* \setminus \Q \bw$.
    By applying Lemma~\ref{lem:hoge} with $s := 2$,
	we have
	\begin{align*}
		4 = 2 \cdot 2
		&\leq |\{i \in \{1,\ldots,m\} \mid (\bw,\bs_i) \neq 0\}| \cdot \lceil \sqrt{2 \cdot 2}-1 \rceil^2	\\
		&= |\{i \in \{1,\ldots,m\} \mid (\bw,\bv_i) \neq 0\}| \cdot 1\\
		&\leq |\{i \in \{1,\ldots,m\} \mid \supp \bw \cap \supp \bv_i \neq \emptyset\}|	\\
		&\leq \sum_{l=1,2} |\{i \in \{1,\ldots,m\} \mid i_l \in \supp \bv_i\} |	\\
		& \leq 1+1 = 2
	\end{align*}
	a contradiction.
	Hence there exist $i$ and $j$ with $i \neq j$ such that $\supp \bv_i \cap \supp \bv_j \cap X \neq \emptyset$.
	Thus we let $\bu := \bv_i$ and $\bv := \bv_j$.
	Then since $|(\bu,\bv)|=1$,
	Lemma~\ref{lem:v}~\eqref{lem:v:2} implies that $(2-(\pr(\bu),\pr(\bu))(2-(\pr(\bv),\pr(\bv)) \geq 1/4$.
	Therefore \eqref{lem:m:4} is obtained.
\end{proof}
	
\section{Minimal non $s$-integrable lattices}   \label{sec:minimality}
		In this section, we prove Proposition~\ref{prop:P} which will be used to show the minimality of the sublattices $\langle \ba,\bb,\bc \rangle^\perp$ and $\langle \ba,\bb,\bc' \rangle^\perp$ in $\A_{15}^+$ in Proposition~\ref{prop:main_min}.
		It turns out that these non $2$-integrable lattices are not essentially obtained from Conway and Sloane's non $2$-integrable lattices in Theorem~\ref{thm:disc7}.
				
		Plesken~\cite{P} studied minimal non $1$-integrable lattices and additively indecomposable ones defined in the following.
		Note that he calls the bilinear form corresponding to a minimal non $1$-integrable lattice a \emph{block form}.
		We state his claims in terms of lattice theory.
		
		\begin{dfn}
			A lattice $L$ is said to be \emph{additively decomposable} if there are two lattices $M$ and $N$ such that $L$ is isometric to a sublattice of $M \perp N$ which is contained in neither $N$ nor $M$.
			Otherwise it is said to be \emph{additively indecomposable}.
		\end{dfn}
		
		\begin{lem}[{\cite[(II.5)~COROLLARY]{P}}]	\label{lem:P}
			A lattice $L$ is minimal non $1$-integrable if and only if the minimum norm of $L^*$ is greater than $1$.
		\end{lem}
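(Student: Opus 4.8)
The plan is to first reduce the statement to a cleaner equivalence. Since a nonzero $1$-integrable lattice is automatically non $1$-minimal (as noted in Section~\ref{sec:intr}), being $1$-minimal forces a nonzero lattice to be non $1$-integrable; hence for a nonzero lattice $L$ the property of being minimal non $1$-integrable, that is, non $1$-integrable and $1$-minimal, is equivalent to being $1$-minimal alone. Thus $L$ is minimal non $1$-integrable if and only if $L$ is \emph{not} non $1$-minimal, and it suffices to prove the equivalence that $L$ is non $1$-minimal if and only if the minimum norm of $L^*$ is at most $1$, i.e.\ there is a nonzero $\bu \in L^*$ with $(\bu,\bu) \le 1$; taking the contrapositive then yields the lemma.

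For the forward direction, suppose $L$ is non $1$-minimal, so there is an isometric embedding $\phi$ of $L$ into $M \perp \Z^m$ whose image is not contained in $M$. I would let $\pr$ be the orthogonal projection of $(M \perp \Z^m)\otimes\R$ onto $\phi(L)\otimes\R$, and for each standard basis vector $\be_j$ of $\Z^m$ define $\bu_j \in L\otimes\R$ by $\phi(\bu_j) = \pr(\be_j)$. A short computation using that $\phi$ is an isometry gives $(\bx,\bu_j) = (\phi(\bx),\be_j) \in \Z$ for all $\bx \in L$, so $\bu_j \in L^*$, and $(\bu_j,\bu_j) = |\pr(\be_j)|^2 \le |\be_j|^2 = 1$ since orthogonal projection does not increase norm. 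Because $\phi(L) \not\subseteq M$, some coordinate $(\phi(\bx),\be_j)$ is nonzero, so the corresponding $\bu_j$ is a nonzero vector of $L^*$ of norm at most $1$.

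For the reverse direction, which I expect to be the crux, I would turn a nonzero $\bu \in L^*$ with $(\bu,\bu) \le 1$ into an explicit embedding. The key is to introduce the symmetric form $R(\bx,\by) := (\bx,\by) - (\bx,\bu)(\by,\bu)$ on $L$: it is $\Z$-valued because $L$ is integral and $(\bx,\bu),(\by,\bu) \in \Z$, and it is positive semi-definite since $(\bx,\bu)^2 \le (\bx,\bx)(\bu,\bu) \le (\bx,\bx)$ by Cauchy--Schwarz together with $(\bu,\bu) \le 1$. Let $M := (L,R)/\mathrm{rad}(R)$, a positive definite integral lattice, and write $\bar{\bx}$ for the image of $\bx$. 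Then the map $\psi(\bx) := \bar{\bx} + (\bx,\bu)\,\be$, where $\be$ generates the rank-one lattice $\Z$, satisfies $(\psi(\bx),\psi(\by)) = R(\bx,\by) + (\bx,\bu)(\by,\bu) = (\bx,\by)$, so $\psi$ is an isometric embedding of $L$ into $M \perp \Z$; it is injective because $\psi(\bx)=0$ forces $(\bx,\bu)=0$ and $\bx \in \mathrm{rad}(R)$, whence $(\bx,\by)=0$ for all $\by$ and so $\bx = 0$. Finally, since $\bu \neq 0$ there is $\bx$ with $(\bx,\bu) \neq 0$, so $\psi(L)$ is not contained in $M$, witnessing that $L$ is non $1$-minimal.

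The routine parts are the two isometry checks and the integrality and semi-definiteness of $R$; the one genuinely creative step is the construction of $M$ and $\psi$ in the reverse direction, where the point is that subtracting the rank-one form $(\bx,\bu)(\by,\bu)$ leaves an \emph{integral} positive semi-definite remainder precisely because $(\bu,\bu) \le 1$, so that a single norm-at-most-$1$ dual vector already suffices to split off one copy of $\Z$. The main thing to be careful about is that $R$ may be degenerate, which is why I pass to the quotient by its radical rather than projecting $L$ onto $\bu^{\perp}$, whose image need not be a lattice.
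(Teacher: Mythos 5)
Your proof is correct, but note that there is no in-paper argument to compare against: the paper states Lemma~\ref{lem:P} as a quotation from Plesken~\cite[(II.5)~COROLLARY]{P} without proof, and what you have written is a complete, self-contained reconstruction along the same lines as Plesken's dual-vector argument. Your reduction (for nonzero $L$, minimal non $1$-integrable $\Leftrightarrow$ $1$-minimal, via the paper's remark that a nonzero $1$-integrable lattice is non $1$-minimal) is sound; the projection argument in the forward direction and the rank-one subtraction $R(\bx,\by)=(\bx,\by)-(\bx,\bu)(\by,\bu)$ with passage to $L/\mathrm{rad}(R)$ in the reverse direction both check out, and you correctly isolate the key point that $(\bu,\bu)\le 1$ is exactly what makes $R$ positive semi-definite while $\bu\in L^*$ makes it integral. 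Two small points would make the write-up airtight. First, you define $\bu_j\in L\otimes\R$, whereas the paper's $L^*$ sits inside $L\otimes\Q$; rationality is automatic, since writing $\bu_j$ in a basis of $L$ gives $Gc=b$ with $G$ the Gram matrix (invertible over $\Q$) and $b$ an integer vector, but this deserves a sentence. Second, you should record why $M=(L,R)/\mathrm{rad}(R)$ is a lattice in the paper's sense: the quotient is torsion-free (if $n\bx\in\mathrm{rad}(R)$ then $\bx\in\mathrm{rad}(R)$), hence free, and the induced form is positive definite because for a semi-definite form $R(\bx,\bx)=0$ already forces $\bx\in\mathrm{rad}(R)$; note also that $M$ may be the zero lattice (your rank-one case $R\equiv 0$), which the paper's definitions explicitly permit. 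Your standing restriction to nonzero $L$ is the right reading of the statement, since the zero lattice is $1$-integrable and the minimum norm of its dual is an empty minimum.
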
			
			
		Moreover, Plesken gave a sufficient condition for a lattice to be additively indecomposable (see \cite[(III.1)~PROPOSITION]{P}).
		With a slight change in his argument, the following lemma is derived.
		Note that a lattice is said to be \emph{irreducible}
		if it is not the orthogonal sum of two nonzero lattices.
		\begin{prop}		\label{prop:P}
			Let $L$ be a minimal non $1$-integrable lattice.
			Suppose that there is an irreducible sublattice of rank at least $\rank L- 5$ which is generated by elements of norm at most $3$.
			Then $L$ is additively indecomposable.
		\end{prop}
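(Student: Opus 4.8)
The plan is to prove the contrapositive in the following disguised form: assuming $L$ is additively decomposable, I will produce a $1$-integrable decomposition that contradicts the minimality of $L$. By Lemma~\ref{lem:P}, minimality is equivalent to the minimum norm of $L^*$ being greater than $1$, so the whole argument can be phrased in terms of controlling short vectors in dual lattices. Concretely, suppose $L$ embeds in an orthogonal sum $M \perp N$ as a sublattice contained in neither summand, and let $\pr_M$ and $\pr_N$ be the orthogonal projections of $L$ onto $M \otimes \Q$ and $N \otimes \Q$ respectively. The hypothesis furnishes an irreducible sublattice $K \subseteq L$ of rank at least $\rank L - 5$ generated by vectors of norm at most $3$.

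First I would show that $K$ must lie entirely inside one of the summands, say $K \subseteq M$. The key point is that a generator $\bv$ of $K$ with $(\bv,\bv) \le 3$ splits as $\bv = \pr_M(\bv) + \pr_N(\bv)$ with $(\pr_M(\bv),\pr_M(\bv)) + (\pr_N(\bv),\pr_N(\bv)) = (\bv,\bv) \le 3$, and since $M$ and $N$ are integral lattices the projected components cannot both be short and nonzero without violating the condition that the minimum norm of $L^*$ exceeds $1$. More precisely, if a generator straddled both summands, one of its projections would land in the dual of a summand as a vector of norm at most $1$, and by tracking how such a vector sits inside $L^*$ I would contradict Lemma~\ref{lem:P}. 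Because $K$ is irreducible, once one generator is forced into $M$, the connectedness of the ``norm-$\le 3$ generation graph'' propagates the containment to all of $K$; an irreducible lattice cannot be split across an orthogonal decomposition, so $K \subseteq M$ (after possibly renaming $M$ and $N$).

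Next, with $K \subseteq M$ fixed, I would exploit the rank bound $\rank K \ge \rank L - 5$ together with the fact that $L$ is not contained in $M$. Since $L \not\subseteq M$, there is some $\bx \in L$ with $\pr_N(\bx) \neq 0$; the complementary directions available to $N$ have dimension at most $5$, so the ``overflow'' of $L$ into $N$ is confined to a space of dimension at most $5$. I would then analyze the short vectors of $N^*$ coming from these projections: because $\pr_N(\bx) \in N^* $ and the minimum norm of $L^*$ is greater than $1$, a careful computation of $(\pr_N(\bx),\pr_N(\bx))$ in relation to $(\bx,\bx)$ and the contribution from $K$ yields a short dual vector somewhere. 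The aim is to contradict the assumption, using the rank constraint to guarantee that the projection into the small complementary space produces a vector of norm at most $1$ in a relevant dual lattice.

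The main obstacle I anticipate is the bookkeeping in the second step: making rigorous the passage from ``$L$ overflows into a space of dimension $\le 5$'' to ``some dual lattice acquires a vector of norm $\le 1$,'' since this is exactly where the specific constant $5$ and the norm bound $3$ must interact to match Plesken's original threshold. The delicate part is that the projections $\pr_M(\bv)$ of the norm-$\le 3$ generators need not themselves be integral, so I must work with the overlattice $K^* \cap (M \otimes \Q)$ and relate its short vectors to $L^*$; controlling the covering radius or the short-vector structure of this rank-$\le 5$ complement is where Plesken's argument for $1$-integrable lattices is invoked, and adapting it with the ``slight change'' mentioned in the text to cover the generation-by-norm-$\le 3$ hypothesis is the crux. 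Once that norm-$\le 1$ dual vector is exhibited, Lemma~\ref{lem:P} immediately contradicts the minimal non $1$-integrability of $L$, completing the contrapositive and hence the proof.
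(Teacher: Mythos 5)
Your first step is essentially workable, with a small variation from the paper. Since $L\subseteq M\perp N$, the projections of a straddling generator $\bv$ of $K$ land in the integral lattices $M$ and $N$ themselves, so their norms are \emph{positive integers} summing to at most $3$, forcing one of them, say $\pr_N(\bv)$, to have norm exactly $1$. The paper then splits off this vector, $N=N'\perp\langle \pr_N(\bv)\rangle$, and gets $L\subseteq (M\perp N')\perp\Z$ with $L\not\subseteq M\perp N'$, contradicting $1$-minimality directly from the definition. Your alternative can be made to work too: the orthogonal projection $\bw$ of $\pr_N(\bv)$ onto $L\otimes\Q$ lies in $L^*$ (it pairs integrally with $L$ because $\pr_N(\bv)$ does), is nonzero because $(\bw,\bv)=(\pr_N(\bv),\pr_N(\bv))=1$, and has norm at most $1$, contradicting Lemma~\ref{lem:P}; but as written, ``tracking how such a vector sits inside $L^*$'' leaves this unargued, and note that $\pr_N(\bv)$ itself need not lie in $L\otimes\Q$, so the projection step is not optional.

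The genuine gap is your second step, and you correctly flag it yourself as the unresolved crux. There is no ``careful computation'' producing a short dual vector from the projections: additive decomposability does not yield a norm-$\le 1$ vector in $L^*$ (or in the dual of the overflow) by any local bookkeeping, and a short vector in $\pr_N(L)^*$ would in any case contradict nothing. The actual mechanism is of a different kind. One takes the primitive closure $P:=(K\otimes\Q)\cap L$, so that $L=P\oplus Q$ for some complement $Q$ and $\pr_N(L)=\pr_N(Q)$ has rank at most $\rank L-\rank K\le 5$; the decisive input is then the embedding theory of the paper: every lattice of rank $n$ is a sublattice of an odd unimodular lattice of rank $n+3\le 8$ (Theorem~\ref{embedding to odd}, case $m\ge n+3$), and every odd unimodular lattice of rank $k\le 8$ is isometric to $\Z^k$. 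Hence $\pr_N(Q)\subseteq\Z^8$, so $L\subseteq M\perp\Z^8$ while $L\not\subseteq M$, i.e.\ $L$ is non $1$-minimal, contradicting minimality directly. Equivalently, the key fact is $\phi(1)=6$: every lattice of rank at most $5$ is $1$-integrable, and this is exactly how the constant $5$ in the hypothesis interacts with the conclusion. Your plan to control covering radii or the short-vector structure of the rank-$\le 5$ complement cannot substitute for this, since that complement is an essentially arbitrary small-rank lattice; without invoking the embedding theorem (or Ko's theorem) the argument does not close.
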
		
		\begin{proof}
			Suppose that there exists such a sublattice $L'$.
			By way of contradiction,
			we suppose that $L$ is additively decomposable.
			Thus there are two lattices $M$ and $N$ such that $L \subseteq M \perp N$, $L \not\subseteq M$ and $L \not\subseteq N$.
			Let $\pr_M$ and $\pr_N$ denote the orthogonal projections to $M$ and $N$, respectively.
			
			First we show that either $L' \subseteq M$ or $L' \subseteq N$.
			It suffices to show that there is no element $\bu \in L'$ of norm at most $3$ such that $\pr_M(\bu) \neq 0$ and $\pr_N(\bu) \neq 0$.
			Suppose that there exists such an element $\bu$.
			Without loss of generality, we may assume the norm of $\pr_N(\bu)$ is equal to $1$.
			Then $N = N' \perp  \langle \pr_N(\bu) \rangle$ for a sublattice $N'$ of $N$.
			Therefore
			\[
				L \subseteq M \perp N \subseteq (M \perp N') \perp \langle \pr_N(\bu) \rangle \simeq (M \perp N') \perp \Z
				\text{ and }
				L \not\subseteq M \perp N'.
			\]
			This means that $L$ is non $1$-minimal, which gives a contradiction.
			
			Now we may assume that $L' \subseteq M$.
			Set $P := (L'\otimes \Q)\cap L$.
			Then $P$ is a primitive sublattice of $L$ and $L = P \oplus Q$ for some sublattice $Q$ of $L$.		
			Since $P \subseteq M\otimes\Q$, we have
			\[
				L \subseteq M \perp \pr_N(L) = M \perp \pr_N( Q ).
			\]
			As
			\[
				\rank \pr_N(Q) \leq \rank Q = \rank L - \rank P = \rank L - \rank L' \leq 5,
			\]
			this together with Theorem~\ref{embedding to odd} implies that $\pr_N(Q)$ is a sublattice of an odd unimodular lattice of rank at most $8$.  It is well-known that every odd unimodular lattice of rank $k\leq 8$ is isometric to standard lattice $\Z^k$ (see~\cite[Table~16.7]{CS1999}). Thus $\pr_N(Q)\subseteq \Z^8$. Furthermore, $L \subseteq M \perp \Z^8$ and $L \not\subseteq M$.
			This means that $L$ is non $1$ minimal, which leads a contradiction.
			Thus, the desired conclusion holds.
		\end{proof}

\section{Proof of Theorem~\ref{thm:main}}	\label{sec:proof}
In this section, we will prove Theorem~\ref{thm:main}, which follows from Lemma~\ref{lem:thm} and Proposition~\ref{prop:main_min} immediately.

\begin{prop}		\label{prop:N1}
	The sublattice in $\A_{15}^+$ orthogonal to $\langle \ba,\bb,\bc \rangle$ is non $2$-integrable.
\end{prop}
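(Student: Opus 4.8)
The plan is to apply Lemma~\ref{lem:m} to the lattice $N := \langle \ba, \bb, \bc \rangle^\perp$ and derive a contradiction from the conclusion, thereby showing that $N$ cannot be $2$-integrable. First I would set $M := \langle \ba, \bb, \bc \rangle$, so that $N = M^\perp$ in $\A_{15}^+$, and verify that the three hypotheses~\eqref{lem:m:1}--\eqref{lem:m:3} of Lemma~\ref{lem:m} hold for a suitable choice of the subset $X \subseteq \{1,\ldots,16\}$. Hypotheses~\eqref{lem:m:1} and~\eqref{lem:m:2} follow from Lemma~\ref{lem:M12}: since $M$ has Gram matrix~\eqref{thm:main:1} and $\A_{15}^+$ has minimum norm $2$, part~\eqref{b4} of that lemma tells us that the minimum norm of $(M^\perp)^* = N^*$ is at least $16/15 > 1$, giving~\eqref{lem:m:1}, and that every nonzero element of $N^*$ of norm at most $2$ is the projection of some norm-$2$ vector in $\A_{15}^+$, i.e.\ lies in $\pr(\sR)$, giving~\eqref{lem:m:2}. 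For~\eqref{lem:m:3} I would choose $X$ to be a set of indices disjoint from the supports of $\ba, \bb, \bc$ (examining the explicit coordinates, the vectors $\ba,\bb,\bc$ together involve only indices in $\{1,2,3,4,5,6,7,8\}$, so I would take $X \subseteq \{9,\ldots,16\}$, which has at least $3$ elements); any $\be_i - \be_j$ with $i,j \in X$ then has zero inner product with $\ba,\bb,\bc$ and hence lies in $N$.

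Granting the hypotheses, Lemma~\ref{lem:m} asserts that if $N$ were $2$-integrable there would exist $\bu, \bv \in \sR$ with $\bu \neq \pm\bv$ satisfying conditions~\eqref{lem:m:4} and~\eqref{lem:m:5}. The core of the argument is then to show that no such pair can exist, which produces the contradiction. Here I would exploit the symmetry afforded by Lemma~\ref{lem:type_inner}: the group $A_\pi$ for an appropriate partition $\pi$ of $\{1,\ldots,16\}$ (adapted to the support structure of $\ba,\bb,\bc$ and the chosen $X$) leaves $N$ invariant, so the projected inner products $|(\pr(\bu),\pr(\bu))|$, $|(\pr(\bv),\pr(\bv))|$ and $|(\pr(\bu),\pr(\bv))|$ depend only on the types $\type_\pi(\bu)$, $\type_\pi(\bv)$ and $\type_\pi(\bu,\bv)$. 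This reduces the (a priori large) search over pairs $\bu,\bv \in \sR$ to a finite and manageable enumeration over type-data.

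The concrete step I expect to carry the weight is the following: for each admissible type-pair with $\supp\bu \cap \supp\bv \cap X \neq \emptyset$ (as forced by~\eqref{lem:m:4}), I would compute the projected norms $(\pr(\bu),\pr(\bu))$ and $(\pr(\bv),\pr(\bv))$ and the off-diagonal entry $(\pr(\bu),\pr(\bv))$, and check that one of the two required inequalities fails --- either $(2-(\pr(\bu),\pr(\bu)))(2-(\pr(\bv),\pr(\bv))) < 1/4$, contradicting~\eqref{lem:m:4}, or the matrix $2I - G$ is not positive semi-definite, contradicting~\eqref{lem:m:5}. Computing these projections requires knowing $\pr$ explicitly on $N\otimes\Q$, which in turn rests on the decomposition $M^* = \langle \bv_1,\bv_2\rangle^* \perp \langle \bv_3 \rangle^*$ and the coset representatives listed in Lemma~\ref{lem:M12}~\eqref{b3}; since $\pr(\bw') = \bw' - (\text{its }M\text{-component})$ and the norm-$2$ vectors project to the coset representatives of small norm, the projected norms are among $16/15, 11/15, \ldots$, all strictly less than $2$.

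The main obstacle is the finite case analysis in this last step: one must verify, across every surviving type-configuration, that the two inequalities~\eqref{lem:m:4} and~\eqref{lem:m:5} are jointly unsatisfiable. The symmetry reduction via Lemma~\ref{lem:type_inner} is what makes this tractable rather than hopeless, but the bookkeeping of which norm-$2$ vectors $\bu,\bv$ have overlapping support inside $X$ while simultaneously projecting to sufficiently long vectors in $N^*$ is delicate; this is where the specific geometry of the Gram matrix~\eqref{thm:main:1} and the coset structure of $\A_{15}^+$ must be used carefully. Once every configuration is eliminated, the existence claim of Lemma~\ref{lem:m} is contradicted, so $N$ is not $2$-integrable, as desired.
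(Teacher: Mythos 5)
Your proposal follows essentially the same route as the paper's proof: verifying hypotheses~\eqref{lem:m:1}--\eqref{lem:m:3} of Lemma~\ref{lem:m} via Lemma~\ref{lem:M12}~\eqref{b4} with $X=\{9,\ldots,16\}$, reducing by symmetry through Lemma~\ref{lem:type_inner} with a partition adapted to $\ba,\bb,\bc$ (the paper takes $\{1\},\{2,3\},\{4,5\},\{6,7,8\},X$), and then the finite type-pair check that either $(2-(\pr(\bu),\pr(\bu)))(2-(\pr(\bv),\pr(\bv)))<1/4$ or $2I-G$ fails to be positive semi-definite, which is exactly the computation the paper carries out (projected norms $19/15$, $8/5$, $7/5$, $5/3$, $2$, with only the type pairs $(\{1,5\},\{2,5\})$, $(\{1,5\},\{3,5\})$, $(\{3,5\},\{3,5\})$ surviving). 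One small correction: the vectors $\ba,\bb,\bc$ have full support (every coordinate is $1/4$ or $-3/4$), so condition~\eqref{lem:m:3} holds not because $X$ avoids their supports but because their coordinates are constant on $X$, which is what makes $(\be_i-\be_j,\ba)=(\be_i-\be_j,\bb)=(\be_i-\be_j,\bc)=0$ for $i,j\in X$; likewise the small quantities $1/3,2/5,3/5,11/15,14/15$ from Lemma~\ref{lem:M12}~\eqref{b3} are the coset norms in $M^*/M$, and the projected norms are $2$ minus these, not the coset norms themselves.
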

\begin{proof}
	Let $N$ be the sublattice in $\A_{15}^+$ orthogonal to $\langle \ba,\bb,\bc \rangle$.
	Let $\pr$ denote the orthogonal projection from $\A_{15}^+$ to $N\otimes\Q$.
	Lemma~\ref{lem:M12}~(\ref{lem:m:4}) asserts that
	$N$ satisfies the two conditions~(\ref{lem:m:1}) and (\ref{lem:m:2}) in Lemma~\ref{lem:m}.
	Let $X:=\{9,10,\ldots,16\}$.
	Then, the condition~(\ref{lem:m:3}) in Lemma~\ref{lem:m} is satisfied.
	Hence, we apply Lemma~\ref{lem:m} to $N$ and $X$ to prove that $N$ is non $2$-integrable.
	Indeed, it suffices to prove that
	$2I-G$ is non positive semi-definite,
	where $G$ is the Gram matrix with respect to $\pr(\bu)$ and $\pr(\bv)$,
	for any $\bu$ and $\bv$ in $\sR$ with $\bu \neq \pm \bv$, $\supp \bu \cap \supp \bv \cap X \neq \emptyset$
	and 
	\begin{align}	\label{prop:N1:1}
		(2-(\rho(\bu),\rho(\bu)))(2-(\rho(\bv),\rho(\bv))) \geq 1/4.
	\end{align}
	Fix such elements $\bu$ and $\bv$.
	Let $\pi$ be the partition of $\{1,\ldots,16\}$ consisting of
	\[	
	X_1 := \{1\},~X_2 := \{2,3\},~X_3 := \{4,5\},~X_4 := \{6,7,8\} \text{ and }X_5 := X.
		\]
	Then, $\langle \ba,\bb,\bc \rangle$ is invariant under $A_{\pi}$,
	and hence so is $N$.
	Lemma~\ref{lem:type_inner}~\eqref{lem:type_inner:1} implies that the norm of $\rho(\bx)$ depends only on the $\type_{\pi}(\bx)$ for every $\bx \in \sR$.
	Note that
	\begin{align*}
		\pr(\be_1-\be_{16})
		&= \frac{1}{60}( 27,-13,-13,-1,-1,-9,-9,-9,11,11,11,11,11,11,11,-49 ),	\\
		\pr(\be_2-\be_{16}) 				
		&= \frac{1}{10}(-3,7,-3,-1,-1,1,1,1,1,1,1,1,1,1,1,-9),	\\
		\pr(\be_4-\be_{16})
		&=
		\pr(\be_5-\be_{16})
		= \frac{1}{20}(-3,-3,-3,9,9,1,1,1,1,1,1,1,1,1,1,-19), \\
		\pr(\be_6-\be_{16})
		&= \frac{1}{12}(-3,1,1,1,1,9,-3,-3,1,1,1,1,1,1,1,-11) \text{ and }\\
		\pr(\be_9-\be_{16})
		&= \be_9-\be_{16},
	\end{align*}
	and their norms are $19/15$, $8/5$,$7/5$, $5/3$ and $2$, respectively.
	By~\eqref{prop:N1:1}, without loss of generality we may assume that the pair of the type for $\pi$ of $\bu$ and $\bv$ is one of
	$(\{1,5\},\{1,5\})$, $(\{1,5\},\{2,5\})$, $(\{1,5\},\{3,5\})$ and $(\{3,5\},\{3,5\})$.
	In the case of $\type_{\pi}(\bu) = \type_{\pi} (\bv) = \{1,5\}$, it is easy to see $\bu=\pm\bv$ and this is impossible. By applying Lemma~\ref{lem:type_inner} to $\bu$ and $\bv$,
	the absolute value of the inner product $|(\pr(\bu),\pr(\bv))|$ depends only on the pair of type for $\pi$ of $\bu$ and $\bv$
	since $\type_{\pi}(\bu,\bv)=\{5\}$.
	Thus,
	the Gram matrices $G$ with respect to $\pr(\bu)$ and $\pr(\bv)$ is one of
	\begin{align*}
		\BB{19/15}{\pm 3/5}{\pm 3/5}{8/5},
		\BB{19/15}{\pm 4/5}{\pm 4/5}{7/5} \text{ and }
		\BB{7/5}{\pm 7/5}{\pm 7/5}{7/5}.
	\end{align*}			
	Then $2I-G$ is non positive semi-definite.
	The desired conclusion follows.
\end{proof}

\begin{prop}		\label{prop:N2}
	The sublattice in $\A_{15}^+$ orthogonal to $\langle \ba,\bb,\bc' \rangle$ is non $2$-integrable.
\end{prop}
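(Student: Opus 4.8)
The plan is to run the argument of Proposition~\ref{prop:N1} in parallel, with $\bc'$ in place of $\bc$, reducing everything to Lemma~\ref{lem:m}. Writing $N := \langle \ba,\bb,\bc' \rangle^{\perp}$ and letting $\pr$ be the orthogonal projection from $\A_{15}^+$ onto $N\otimes\Q$, I would first note that Lemma~\ref{lem:M12}~\eqref{b4} gives conditions~\eqref{lem:m:1} and \eqref{lem:m:2} of Lemma~\ref{lem:m}, and that taking $X := \{8,9,\ldots,16\}$ --- the indices on which $\ba,\bb,\bc'$ all carry the entry $1/4$, so that $\be_i-\be_j\in N$ whenever $i,j\in X$ --- supplies condition~\eqref{lem:m:3} with $|X|\ge 3$. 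By Lemma~\ref{lem:m} it then suffices to show that $2I-G$ is \emph{not} positive semi-definite for every pair $\bu,\bv\in\sR$ with $\bu\neq\pm\bv$, $\supp\bu\cap\supp\bv\cap X\neq\emptyset$ and $(2-(\pr\bu,\pr\bu))(2-(\pr\bv,\pr\bv))\ge 1/4$, where $G$ is the Gram matrix of $\pr(\bu),\pr(\bv)$.

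To make the pairs finite, I would use the partition $\pi' = \{X_1,\dots,X_5\}$ with $X_1=\{1,2,3\}$, $X_2=\{4\}$, $X_3=\{5\}$, $X_4=\{6,7\}$, $X_5=X$, obtained by grouping indices according to which of $\ba,\bb,\bc'$ they lie in the support of; a direct check shows $A_{\pi'}$ fixes $\ba,\bb,\bc'$, hence preserves $N$. By Lemma~\ref{lem:type_inner} the norm $(\pr\bx,\pr\bx)$ depends only on $\type_{\pi'}(\bx)$, and computing the representatives $\pr(\be_k-\be_8)$ I would find the norms $8/5,16/15,16/15,5/3,2$ for the types $\{1,5\},\{2,5\},\{3,5\},\{4,5\},\{5\}$. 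Since the product condition forces both factors $2-(\pr\bx,\pr\bx)$ to be strictly positive, no vector of type $\{5\}$ (norm $2$) can occur; hence each of $\bu,\bv$ meets $X_5$ only in the shared index, so $|(\bu,\bv)|=1$ and $\type_{\pi'}(\bu,\bv)=\{5\}$, and Lemma~\ref{lem:type_inner}~\eqref{lem:type_inner:2} makes $|(\pr\bu,\pr\bv)|$ a function of the pair of types alone.

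Testing the factors $2/5,\,14/15,\,14/15,\,1/3$ against the bound $1/4$, the surviving type pairs are $(\{1,5\},\{2,5\})$, $(\{2,5\},\{3,5\})$, $(\{2,5\},\{4,5\})$ together with their images under the involution $(4\,5)$, which swaps $X_2\leftrightarrow X_3$ and $\ba\leftrightarrow\bb$ while fixing $\bc'$; the diagonal pairs $(\{2,5\},\{2,5\})$ and $(\{3,5\},\{3,5\})$ are discarded because $X_2,X_3$ are singletons and so force $\bu=\pm\bv$. For each surviving pair I would compute the off-diagonal entry $(\pr\bu,\pr\bv)=(\bu,\bv)-\bw_{\bu}^{\top}G_M^{-1}\bw_{\bv}$, where $G_M$ is the Gram matrix~\eqref{thm:main:1} and $\bw_{\bu},\bw_{\bv}$ record the inner products with $\ba,\bb,\bc'$, obtaining the Gram matrices
\[
	\BB{8/5}{\pm 4/5}{\pm 4/5}{16/15},\quad
	\BB{16/15}{\pm 16/15}{\pm 16/15}{16/15},\quad
	\BB{16/15}{\pm 2/3}{\pm 2/3}{5/3}.
\]
In each case $\det(2I-G)<0$, so $2I-G$ is not positive semi-definite, contradicting Lemma~\ref{lem:m}~\eqref{lem:m:5}; thus $N$ is non $2$-integrable.

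The one genuine divergence from Proposition~\ref{prop:N1}, and the place to be careful, is the bookkeeping of surviving cases: here the blocks $X_2,X_3$ are singletons, so the diagonal pairs of types $\{2,5\}$ and $\{3,5\}$ collapse to $\bu=\pm\bv$, whereas these same types now carry the large factor $14/15$ and make several mixed pairs survive the product test that had no analogue in Proposition~\ref{prop:N1}. The list of cases must therefore be recomputed from scratch rather than copied; once it is correct, checking $\det(2I-G)<0$ is routine.
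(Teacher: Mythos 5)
Your proof is correct and takes essentially the same route as the paper's: the same reduction via Lemma~\ref{lem:M12}~\eqref{b4} and Lemma~\ref{lem:m} with $X=\{8,\ldots,16\}$, the same type analysis (your finer partition with singleton blocks $\{4\},\{5\}$ together with the extra involution $(4\,5)$ is equivalent to the paper's coarser partition using $X_2=\{4,5\}$, whose block symmetry already contains $(4\,5)$), and the same three surviving cases, since your mixed pair $(\{2,5\},\{3,5\})$ is exactly the paper's diagonal pair of types meeting $\{4,5\}$. In fact your Gram matrices $\BB{8/5}{\pm 4/5}{\pm 4/5}{16/15}$ and $\BB{16/15}{\pm 16/15}{\pm 16/15}{16/15}$ carry the correct entries where the paper's displayed matrices contain typos ($8/15$ for $8/5$, and $16/5$ for $16/15$), as one checks from the printed projections $\pr(\be_1-\be_{16})$ and $\pr(\be_4-\be_{16})=\pr(\be_5-\be_{16})$; in all cases $\det(2I-G)<0$, as you say.
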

\begin{proof}
	Let $N'$ be the sublattice in $\A_{15}^+$ orthogonal to $\langle \ba,\bb,\bc' \rangle$.
	Let $\pr$ denote the orthogonal projection from $\A_{15}^+$ to $N'\otimes\Q$.
	Let $X:=\{8,9,\ldots,16\}$.	
	As in the case of the proof of Proposition~\ref{prop:N1},
	we apply Lemma~\ref{lem:m} to $N$ and $X$.
	To prove that $N$ is non $2$-integrable,
	 it suffices to prove that
	$2I-G$ is non positive semi-definite,
	where $G$ is the Gram matrix with respect to $\pr(\bu)$ and $\pr(\bv)$,
	for any $\bu$ and $\bv$ in $\sR$ with $\bu \neq \pm \bv$, $\supp \bu \cap \supp \bv \cap X \neq \emptyset$,
	and 
	\begin{align}	\label{prop:N2:1}
		(2-(\rho(\bu),\rho(\bu)))(2-(\rho(\bv),\rho(\bv))) \geq 1/4.
	\end{align}
	Fix such elements $\bu$ and $\bv$.
	Let $\pi$ be the partition of $\{1,\ldots,16\}$ consisting of
	 	\[	
	 X_1 := \{1,2,3\},~X_2 := \{4,5\},~X_3 := \{6,7\} \text{ and }X_4:= X.
	 \]
	Note that
	\begin{align*}
		\pr(\be_1-\be_{16})
		&= \frac{1}{10}(7,-3,-3,-1,-1,1,1,1,1,1,1,1,1,1,1,-9 ),	\\
		\pr(\be_4-\be_{16}) 				
		&= \pr(\be_5-\be_{16})				
		= \frac{1}{15}(-1,-1,-1,3,3,-3,-3,2,2,2,2,2,2,2,2,-13), 	\\
		\pr(\be_6-\be_{16})
		&= \frac{1}{12}(1,1,1,-3,-3,9,-3,1,1,1,1,1,1,1,1,-11) \text{ and } \\
		\pr(\be_8-\be_{16})
		&=	\be_8-\be_{16},
	\end{align*}
	and their norms are $8/5$, $16/15$, $5/3$ and $2$, respectively.
	By~\eqref{prop:N2:1}, without loss of generality we may assume that the pair of the type for $\pi$ of $\bu$ and $\bv$ is one of
	$(\{1,4\},\{2,4\})$, $(\{2,4\},\{2,4\})$ and $(\{2,4\},\{3,4\})$.
	By applying Lemma~\ref{lem:type_inner} to $\bu$ and $\bv$,
	the absolute value of the inner product $|(\pr(\bu),\pr(\bv))|$ depends only on the pair of type for $\pi$ of $\bu$ and $\bv$.
	Thus,
	the Gram matrices $G$ with respect to $\pr(\bu)$ and $\pr(\bv)$ is one of
	\begin{align*}
		\BB{8/15}{\pm 4/5}{\pm 4/5}{16/15},
		\BB{16/15}{\pm 16/5}{\pm 16/5}{16/5} \text{ and }
		\BB{16/15}{\pm 2/3}{\pm 2/3}{5/3}.
	\end{align*}			
	Then $2I-G$ is not positive semi-definite.
	The desired conclusion follows.
\end{proof}

\begin{prop}	\label{prop:main_min}
	The sublattices $\langle \ba,\bb,\bc \rangle^\perp$ and $\langle \ba,\bb,\bc' \rangle^\perp$ in $\A_{15}^+$ are non-isometric and minimal non $2$-integrable lattices.
\end{prop}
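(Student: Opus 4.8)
The plan is to split the statement into three assertions about $N:=\langle \ba,\bb,\bc\rangle^{\perp}$ and $N':=\langle \ba,\bb,\bc'\rangle^{\perp}$: that they are non $2$-integrable, that they are $2$-minimal, and that they are non-isometric. The first is already available, since Propositions~\ref{prop:N1} and~\ref{prop:N2} prove exactly that $N$ and $N'$ are non $2$-integrable. So the work is to prove $2$-minimality and non-isometry, and I would route the minimality through additive indecomposability so as to invoke Proposition~\ref{prop:P}.

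First I would verify the hypotheses of Proposition~\ref{prop:P} for each lattice. By Lemma~\ref{lem:M12}~(\ref{b4}) the minimum norm of $(M^{\perp})^{*}$ is at least $16/15>1$, so both $N^{*}$ and $(N')^{*}$ have minimum norm greater than $1$; by Lemma~\ref{lem:P} this means $N$ and $N'$ are minimal non $1$-integrable. For the required irreducible sublattice I would use the block of coordinates on which $\ba,\bb,\bc$ (resp.\ $\ba,\bb,\bc'$) are constant: the roots $\be_i-\be_j$ with $i,j\in\{9,\dots,16\}$ all lie in $N$ and span a copy of $\A_7$, irreducible of rank $7=\rank N-5$, while for $N'$ the indices $\{8,\dots,16\}$ give a copy of $\A_8$, irreducible of rank $8$. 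These sublattices are generated by norm-$2$ vectors, so Proposition~\ref{prop:P} applies and shows that $N$ and $N'$ are additively indecomposable.

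The bridge from additive indecomposability to $2$-minimality is the crux, and I would argue it directly. Suppose $N$ were non $2$-minimal, so $\sqrt{2}\cdot N$ embeds in $\sqrt{2}\cdot M\perp\Z^{m}$ with image not contained in $\sqrt{2}\cdot M$. Let $\pi$ be the orthogonal projection onto the $\Z^{m}$-factor and put $R:=\pi(\sqrt{2}\cdot N)\subseteq\Z^{m}$. For $x,y\in\sqrt{2}\cdot N$ one has $(x,y)\in2\Z$ and $(x-\pi x,\,y-\pi y)\in2\Z$, the latter because $x-\pi x,\,y-\pi y\in\sqrt{2}\cdot M$; hence $(\pi x,\pi y)\in2\Z$ and $R$ is an even lattice. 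Consequently $\tfrac{1}{\sqrt{2}}R$ is again an integral lattice, and dividing the embedding by $\sqrt{2}$ yields
\begin{equation*}
	N\hookrightarrow M\perp\tfrac{1}{\sqrt{2}}R,
\end{equation*}
with $N$ contained in neither factor: not in $M$ by hypothesis, and not in $\tfrac{1}{\sqrt{2}}R$ since that would force $N$ to be $2$-integrable. This exhibits $N$ as additively decomposable, contradicting the previous paragraph, and the same argument works for $N'$. Combined with non $2$-integrability, this gives that $N$ and $N'$ are minimal non $2$-integrable.

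Finally, non-isometry is a short count of minimal vectors. A root $\be_i-\be_j$ lies in $N$ exactly when the $i$th and $j$th entries of each of $\ba,\bb,\bc$ agree, which partitions $\{1,\dots,16\}$ according to the common value of $(a_k,b_k,c_k)$ into blocks of sizes $1,2,1,1,3,8$ for $N$ and $3,1,1,2,9$ for $N'$; the number of norm-$2$ vectors is $2\sum_{b}\binom{n_b}{2}$, namely $2(1+3+28)=64$ for $N$ and $2(3+1+36)=80$ for $N'$. Since the number of minimal vectors is an isometry invariant and $64\neq80$, the lattices are not isometric. I expect the main obstacle to be the bridging step of the third paragraph: the key realization is that integrality of $N$ forces the $\Z^{m}$-part $R$ of any putative embedding to be \emph{even}, which is precisely what makes $\tfrac{1}{\sqrt{2}}R$ integral and converts a failure of $2$-minimality into a genuine additive decomposition; by contrast, checking the hypotheses of Proposition~\ref{prop:P} and the root count are routine.
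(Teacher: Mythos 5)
Your proposal is correct and follows essentially the same route as the paper: non $2$-integrability from Propositions~\ref{prop:N1} and~\ref{prop:N2}, minimal non $1$-integrability via Lemma~\ref{lem:M12} and Lemma~\ref{lem:P}, additive indecomposability via Proposition~\ref{prop:P} applied to the root sublattice on the coordinates where $\ba,\bb,\bc$ (resp.\ $\ba,\bb,\bc'$) are constant, and non-isometry by counting norm-$2$ vectors ($64$ versus $80$, matching the paper's computation). The one place you go beyond the paper is the third paragraph: the paper passes from additive indecomposability to $2$-minimality with a bare ``in particular,'' whereas you supply the justification --- your projection argument in fact shows the scalar ideal of $R$ lies in $2\Z$ (stronger than ``even,'' which alone would not make $\tfrac{1}{\sqrt{2}}R$ integral), and that is exactly what converts a failure of $2$-minimality into an additive decomposition.
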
		
\begin{proof}
	Set $N := \langle \ba,\bb,\bc \rangle^\perp$ and $N' := \langle \ba,\bb,\bc' \rangle^\perp$.
	Then
	\begin{align}		\label{eq:N2}	
		N \cap \sR &= \bigsqcup_{Y \in \tau} \{ \bx \in \sR \mid \supp \bx \subseteq Y \} 
		= \{ \be_i-\be_j \mid i \neq j \text{ and } i,j \in Y \text{ for some } Y \in \tau  \},
	\end{align}
	where $\tau := \{\{1\},\{2,3\},\{4\},\{5\},\{6,7,8\},\{9,\ldots,16\}\}$.
	Since	the minimum norm of $N$ is $2$, the kissing number of $N$ is $|N \cap \sR|$.
	Hence the kissing number of $N$ is $2 \cdot 1 + 3 \cdot 2 + 8 \cdot 7 = 64$.
	Similarly,
	\begin{align}	\label{eq:N'2}
		N' \cap \sR = \bigsqcup_{Y \in \tau'} \{ \bx \in \sR \mid \supp x \subseteq Y \},
	\end{align}
	where $\tau' := \{\{1,2,3\},\{4\},\{5\},\{6,7\},\{8,\ldots,16\}\}$,
	and the kissing number of $N'$ is $3 \cdot 2 + 2 \cdot 1 + 9 \cdot 8 = 80$.	
	Hence $N$ and $N'$ are non-isometric.
	
	Since Proposition~\ref{prop:N1} and Proposition~\ref{prop:N2} claim that $N$ and $N'$ are non $2$-integrable,
	it suffices to prove the minimality of them.
	Lemma~\ref{lem:M12} implies that the minimum norms of $N^*$ and $(N')^*$ are at least $16/15$.
	Thus, by Lemma~\ref{lem:P}, they are minimal non $1$-integrable lattices.
	By applying Proposition~\ref{prop:P} with $L := N$ and $L := N'$,
	we prove that $N$ and $N'$ are additively indecomposable.
	In particular, they are minimal non $2$-integrable.
	Namely, it suffices to show that each of $N$ and $N'$ contains an irreducible sublattice of rank at least $7 = 12-5$ generated by elements of norm at most $3$.
	By~\eqref{eq:N2} and \eqref{eq:N'2},
	both $N$ and $N'$ contains
	\[
		\langle \be_9-\be_{10}, \ldots, \be_{15}-\be_{16} \rangle
	\]

	as a sublattice of rank $7$.
	Therefore the desired conclusion follows.
\end{proof}			
		
Plesken~\cite{P} has proved that  $\langle \ba,\bb,\bc' \rangle^\perp$ is additively indecomposable (see \cite[(III.3)~EXAMPLE]{P}),
where $\langle \ba,\bb,\bc' \rangle^\perp$ is written by $1^8,2^3;6$.
However, it is not obvious that our definition of $\langle \ba,\bb,\bc' \rangle^\perp$ and his are same.	
	
\begin{rem}		\label{rem:new strategy}
	Conway and Sloane~\cite{CS1989} proved Theorem~\ref{thm:disc7}
	by using Lemma~\ref{lem:eut} and choosing test vectors $\bw$ well.
	We can also prove it by the same strategy as the proof of Theorem~\ref{thm:main} as follows:
	By the similar argument as the proof of Lemma~\ref{lem:thm},
	there are precisely two lattices with Gram matrix~(\ref{thm:disc7:1}) up to $\Aut(\A_{15}^+)$ in $\A_{15}^+$,
	and they are given by $\langle \ba,\bb,\bc'' \rangle$ and $\langle \ba,\bb,\bc''' \rangle$,
	where
	\begin{align*}
		\bc'' &:= \frac{1}{4}(-3,-3, -1,-3,-3,	 1,1,1,1,1,1,1,1,1,1)	\text{ and}		\\
		\bc''' &:= \frac{1}{4}(-3,-3,-3,1,1,-3,		1,1,1,1,1,1,1,1,1,1).
	\end{align*}				
	Let $N'' := \langle \ba,\bb,\bc'' \rangle^\perp$, $N''' := \langle \ba,\bb,\bc''' \rangle^\perp$, $Y:=\{6,\ldots,16\}$ and $Z:=\{7,\ldots,16\}$.
	By considering the analogue of Lemma~\ref{lem:M12} where Gram matrix~(\ref{thm:main:1}) is replaced by the Gram matrix~(\ref{thm:disc7:1}),
	it follows that $N''$ and $N'''$ satisfy the conditions~(\ref{lem:m:1}) and (\ref{lem:m:2}) in Lemma~\ref{lem:m}.
	In addition,
	the condition~(\ref{lem:m:3}) in Lemma~\ref{lem:m} is satisfied
	when we let $(N,X):=(N'',Y)$ and $(N''',Z)$.
	Hence,
	we can apply Lemma~\ref{lem:m} with $(N,X):=(N'',Y)$ and $(N''',Z)$.
	Therefore	we can prove the non $2$-integrability by the similar argument as the proof of Proposition~\ref{prop:N1} and Proposition~\ref{prop:N2} without using computer.
\end{rem}
			
\section*{Acknowledgements}
We would like to express our sincere gratitude to Professor Munemasa for his helpful comments.

\end{document}